\numberwithin{equation}{section}
\theoremstyle{plain}
\newtheorem{theorem}{Theorem}[section]
\newtheorem{lemma}[theorem]{Lemma}
\newtheorem{proposition}[theorem]{Proposition}
\theoremstyle{definition}
\newtheorem{definition}[theorem]{Definition}
\newtheorem{example}[theorem]{Example}
\theoremstyle{remark}
\newtheorem{remark}[theorem]{Remark}
\newcommand{\N}{\mathbb{N}}
\newcommand{\Z}{\mathbb{Z}}
\DeclareMathOperator{\gldim}{\rm gldim}
\DeclareMathOperator{\lw}{\rm
LW}\DeclareMathOperator{\nw}{\rm
NW}\DeclareMathOperator{\lc}{\rm LC}
\DeclareMathOperator{\ext}{\underline{Ext}}
\DeclareMathOperator{\gkdim}{\rm GKdim }
\begin{document}

\title{Artin-Schelter regular algebras of dimension five \linebreak with two generators}

\author{G.-S. Zhou\; and \; D.-M. Lu\ }

\address{Department of Mathematics, Zhejiang University,
Hangzhou 310027, China}

\email{10906045@zju.edu.cn; \quad dmlu@zju.edu.cn }

\begin{abstract}
We study and classify Artin-Schelter regular algebras of dimension five with two generators under an additional $\mathbb Z^2$-grading by Hilbert driven Gr\"{o}bner basis computations. All the algebras we obtained are strongly
noetherian, Auslander regular, and Cohen-Macaulay. One of the results provides an answer to Fl{\o}ystad-Vatne's question
in the context of $\mathbb Z^2$-grading. Our results also achieve a connection between Lyndon words and Artin-Schelter regular algebras.
\end{abstract}

\subjclass[2000]{16E65, 16W50, 14A22}

% 16E10 (1991-now) Homological dimension
% 16E65 (2000-now) Homological conditions on rings
%       (generalizations of regular, Gorenstein, Cohen-Macaulay rings, etc.)
% 16W50 (1991-now) Graded rings and modules
% 14A22 (1991-now) Noncommutative algebraic geometry
% 16E45  (2000-now) Differential graded algebras and applications

\keywords{Artin-Schelter regular algebra, noncommutative projective geometry, properly multi-graded, Gr\"{o}bner basis}

\maketitle

%\bigskip

%\tableofcontents

%\bigskip

%\newpage

\section*{Introduction}

\newtheorem{maintheorem}{\bf{Theorem}}
\renewcommand{\themaintheorem}{\Alph{maintheorem}}
\newtheorem{maincorollary}[maintheorem]{\bf{Corollary}}
\renewcommand{\themaincorollary}{}
\newtheorem{mainquestion}[maintheorem]{\bf{Question}}
\renewcommand{\themainquestion}{}

The whole area of noncommutative projective geometry (in
the sense of Artin) began with the classification of Artin-Schelter regular
algebras of dimension 3 by Artin, Schelter, Tate and Van den Bergh
\cite{ASc, ATV1, ATV2} during 1987-1991. These algebras of dimension 3, then of dimension 4 have been extensively studied after that, and many interesting classes of Artin-Schelter regular algebras of
dimension 4 have been found.

To seek a substantial class of Artin-Schelter regular algebras, Palmieri, Wu, Zhang and the second author started the project to classify 4-dimensional Artin-Schelter regular algebras in \cite{LPWZ}. As the paper observes, Artin-Schelter regular algebras of dimension 4 which are domains have three resolution types according to the number of generators in degree 1. And they classified the type of these algebras with two generators by using $A_\infty$-algebraic methods under certain generic conditions. The type with three generators has been studied by Rogalski and Zhang in \cite{RZ},
where they gave all the families of Artin-Schelter regular algebras with an additional $\Z^2$-grading. The type with four generators is Koszul algebras, Zhang and Zhang introduced a new construction, which is called double Ore extension, and they found some
new families of this type (see \cite{ZZ1, ZZ2}).

The study of Artin-Schelter regular algebras of dimension 5 was initiated by Fl{\o}ystad and Vatne in 2011 (see \cite{FV}). In particular, they gave all the possible resolution types of these algebras with two generators of degree 1 according to the number of defining relations.
They showed that there are exactly three types of Hilbert series of such algebras when having three defining relations. Two of these three types can be realized by the enveloping algebras of 5-dimensional graded Lie algebras,
while the other one cannot be realized in such a way but they constructed an extremal algebra. Recently, Wang and Wu classified one of the type with three relations of degree 4 by using $A_\infty$-algebraic method under a generic condition (see \cite{WW}). It is open that whether there is an
Artin-Schelter regular algebra of dimension 5 with the resolution types for the number of defining relations being four or five. We refer to \cite[Theorem 5.6]{FV} for the details.

Though the whole project of classifying Artin-Schelter regular algebras is far from finished, the progress has been made on such algebras of dimensions 4 and 5. An interesting observation is that there is a common property in these newfound algebras (see \cite{FV, LPWZ, RZ, WW, ZZ1, ZZ2}) we mentioned above: all of them can be endowed with an appropriate
$\mathbb{Z}^2$-grading. This guides us in our exploration within the setting of $\mathbb{Z}^2$-grading.

In this paper we classify connected Artin-Schelter regular algebras of dimension 5 with two generators under an additional $\mathbb Z^2$-grading. The general ideas we used are in a way similar to the work of \cite{RZ}. Fundamental invariants of a connected graded algebra are its Hilbert series. A key step is to determine the obstructions of such algebras, which is achieved by using the method of Hilbert driven Gr\"{o}bner basis computations. The classification result of this paper is the following.
\begin{maintheorem}
\label{Thm A}{\it
Let $\/\mathfrak{X}$ be the family consisting of examples from $\mathcal{A}$ to $\mathcal{P}$ (see Sections 3 to 7). Then $\/\mathfrak{X}$ is, up to isomorphism and switching, a complete list of Artin-Schelter regular properly $\Z^2$-graded algebras of global dimension five which are domains generated by two elements and of Gelfand-Kirillov dimension at least four.}
\end{maintheorem}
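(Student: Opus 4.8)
The plan is to split the proof into an \emph{existence} half and a \emph{completeness} half. For existence one must check that each of the sixteen families $\mathcal{A},\dots,\mathcal{P}$ comprising $\mathfrak{X}$ really is a domain, is properly $\Z^2$-graded, is generated by two elements, has $\gkdim\ge 4$, and is Artin-Schelter regular of $\gldim$ five. For completeness one must show conversely that any algebra satisfying these hypotheses is isomorphic, after possibly switching the two generators, to a member of $\mathfrak{X}$. I would set up the ambient picture as $A=T/I$ with $T=k\langle x,y\rangle$ the free algebra on the two generators, and use the proper $\Z^2$-grading to fix, up to switching, the bidegrees of $x$ and $y$; properness forces these bidegrees to be $\Z$-linearly independent, so the bigraded Hilbert series $H_A$ refines the ordinary one and carries considerably more rigidity than in the singly graded setting.

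First I would extract the numerical constraints. Since $A$ is Artin-Schelter regular of $\gldim$ five and a domain generated by two elements, the minimal free resolution of the trivial module $k$ has the form
\[
0\to P_5\to P_4\to P_3\to P_2\to P_1\to P_0\to k\to 0,
\]
with $P_0=A$ and $P_1$ free of rank two; Gorenstein duality makes $P_5$ free of rank one and identifies $P_4$ with the shifted dual of $P_1$, and $P_3$ with that of $P_2$. The analysis of \cite[Theorem 5.6]{FV} bounds the rank of $P_2$, that is, the number of defining relations, to a short list; combined with the bigraded Euler characteristic this pins the numerator of $H_A$ down to finitely many candidates. Imposing $\gkdim A\ge 4$ then discards the candidates whose series grows too slowly, leaving only finitely many admissible bigraded Hilbert series. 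This reduction is routine but indispensable: it is what renders the subsequent combinatorial search finite.

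The core of the argument is the Hilbert driven Gr\"obner basis computation. Fixing a monomial order on the words in $x,y$, each admissible $A$ determines its set of \emph{obstructions} --- the leading words of a reduced Gr\"obner basis of $I$ --- whose complementary set of normal words must reproduce $H_A$. For each admissible series I would enumerate all obstruction sets whose normal-word complement realizes it; this is precisely where the combinatorics of Lyndon words enters, since the admissible leading-word configurations are organized by the Lyndon words in the two generators. For every surviving obstruction set I would lift the leading words to genuine relations, solving for the lower-order coefficients subject to the overlap (diamond) conditions and to the demands that $A$ be a domain and be Gorenstein. After normalizing the solution locus by the residual graded automorphisms and by switching, what remains is exactly one of $\mathcal{A},\dots,\mathcal{P}$.

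I expect the main obstacle to lie in the completeness half, in controlling the Hilbert driven search so that it provably terminates and omits nothing. Two points are delicate. First, one must prove that each candidate obstruction set either violates the Hilbert constraint already in some bounded degree, or else yields a genuine and finite Gr\"obner basis, so that the enumeration is certified rather than merely heuristic. Second, for each family produced one must verify actual Artin-Schelter regularity --- that the minimal resolution really has the self-dual Gorenstein shape above --- and not just the correct Hilbert series, since these two conditions are a priori distinct. Confirming the Gorenstein symmetry for all sixteen families is the most computation-heavy step, and it is here that the bigraded refinement pays off: matching bidegrees at the two ends of the resolution sharply restricts the admissible relations and eliminates many spurious solutions by inspection.
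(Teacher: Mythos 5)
Your proposal follows essentially the same route as the paper's own proof: an existence half verifying that each family $\mathcal{A}$ to $\mathcal{P}$ is AS-regular, a domain, and of the right dimensions (the paper does this via normal extensions of four-dimensional regular algebras, normal sequences, or Ore extensions together with \cite[Lemma 1.3]{RZ}), and a completeness half that runs the Hilbert-driven Gr\"obner basis computation over the Fl{\o}ystad-Vatne resolution types, lifts the candidate obstructions to relations, solves the coefficient systems under the diamond-lemma, domain and Gorenstein constraints, and normalizes by twisting and switching. The only notable deviations are minor: the paper does not use Lyndon words to organize the enumeration (that connection is recorded only a posteriori, in its final section), and its proof additionally checks that the listed algebras are pairwise non-isomorphic as $\Z^2$-graded algebras, a point you leave implicit.
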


The result continues to provide the evidence about the conjecture that Artin-Schelter regular algebras have certain
nice ring-theoretic and homological properties.

\begin{maintheorem}
\label{Thm B}{\it
Let $A$ be an Artin-Schelter regular properly $\Z^2$-graded algebra of global dimension five that is generated by two elements. If $A$ is a domain of $\gkdim A\geq4$, then it is strongly noetherian,
Auslander regular and Cohen-Macaulay.}
\end{maintheorem}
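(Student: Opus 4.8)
The plan is to deduce Theorem~\ref{Thm A}'s classification and then read off Theorem B from it, rather than to argue abstractly from Artin-Schelter regularity (which is only conjecturally enough). Each of the three conclusions---strongly noetherian, Auslander regular, Cohen-Macaulay---is invariant under isomorphism, and one checks directly that all three are stable under passage to the opposite algebra and under the switching operation. Hence by Theorem~\ref{Thm A} it suffices to verify them for one representative of each of the finitely many families $\mathcal{A},\dots,\mathcal{P}$ constructed in Sections 3 to 7, and the proof becomes a finite case analysis powered by the explicit data from those sections.

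The uniform mechanism I would use for each family is a filtered-to-graded comparison. The Gr\"obner basis computations behind the classification furnish every algebra $A$ in the list with a reduced linear basis of normal words, indexed by a finite set of Lyndon words; assigning a suitable weight to the two generators turns this into an $\N$-filtration of $A$ whose associated graded algebra $\operatorname{gr} A$ is generated by the images of the Lyndon-word PBW generators with purely ``quantum'' commutation relations, i.e. $\operatorname{gr} A$ is a skew polynomial ring, an iterated Ore extension $k[z_1][z_2;\sigma_2,\delta_2]\cdots[z_5;\sigma_5,\delta_5]$ of the ground field. Such iterated Ore extensions of $k$ are classically known to be strongly noetherian, Auslander regular and Cohen-Macaulay, so $\operatorname{gr} A$ possesses all three properties.

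It then remains to transport each property from $\operatorname{gr} A$ up to $A$ along the filtration. Strong noetherianity lifts because for any commutative noetherian $R$ one has $\operatorname{gr}(A\otimes R)=(\operatorname{gr} A)\otimes R$, which is noetherian as $\operatorname{gr} A$ is strongly noetherian, whence $A\otimes R$ is noetherian; Auslander regularity and the Cohen-Macaulay property lift by the standard filtered-graded theorems for Auslander-Gorenstein and Cohen-Macaulay rings. Since $A$ already has finite global dimension by Artin-Schelter regularity, only the Auslander condition must be carried over from $\operatorname{gr} A$, and the Cohen-Macaulay equality $j(M)+\gkdim M=\gkdim A=5$ transfers simultaneously.

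The hard part will not be the abstract lifting, which is routine once a good filtration is in hand, but producing, uniformly across $\mathcal{A}$ to $\mathcal{P}$, a weight filtration whose associated graded is genuinely an iterated Ore extension. For the families whose leading monomials already cut out a quantum affine space this is immediate, but for the families arising with higher-degree relations (in the spirit of the Fl\"oystad-Vatne and Wang-Wu types) one must check that the overlap ambiguities in the Gr\"obner basis resolve so that $\operatorname{gr} A$ stays a domain with the predicted Hilbert series $1/\prod_i(1-t^{d_i})$; this is precisely where the explicit generator-and-relation data from the classification is indispensable and where the bulk of the case-by-case verification resides.
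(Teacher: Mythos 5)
Your global strategy---reduce to the classification of Theorem~\ref{Thm A}, note that the three properties are invariant under isomorphism and switching, then verify them family by family---is indeed the paper's strategy, but the mechanism you propose for the family-by-family check has a fatal flaw. Every defining relation of every algebra in $\mathfrak{X}$ is $\Z^2$-homogeneous, so for \emph{any} assignment of weights to the two degree-one generators, all monomials occurring in a given relation have the same weight. Hence the filtration you describe is merely a regrading: $\operatorname{gr} A\cong A$, no term of any relation is killed, and your claim that ``$\operatorname{gr} A$ is an iterated Ore extension of $k$'' collapses to the claim that $A$ \emph{itself} is an iterated Ore extension of $k$. That claim is exactly the hard content; it is nowhere proved in your proposal and is not established for all sixteen families (the paper asserts an iterated-Ore-extension structure only for some of the type $(4,4,4)$ algebras, via \cite{WW}). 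To obtain a genuine degeneration you would instead have to filter by weights on the five Lyndon atoms (a PBW-type monomial filtration, not a filtration induced by weights on $x_1,x_2$), and then both the multiplicativity of that filtration and the skew-polynomial structure of its associated graded ring require a triangularity property of the Gr\"{o}bner reductions (the straightening of $vu$ into $c\,uv$ plus products of intermediate atoms) that is not automatic and would itself demand a case-by-case proof. So the ``routine lifting'' rests on an unestablished foundation; the transfer theorems you cite are fine, but there is nothing yet to transfer from.

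The paper's verification takes a different and filtration-free route: for each family it either exhibits a homogeneous regular normal element $z$ such that $A/(z)$ is isomorphic to a known four-dimensional AS-regular algebra of \cite{LPWZ} (families $\mathcal{A}$, $\mathcal{C}$, $\mathcal{D}$, $\mathcal{E}$, $\mathcal{I}$, $\mathcal{J}$, $\mathcal{M}$, $\mathcal{O}$), or exhibits a normal sequence $z_1,\dots,z_5$ with finite-dimensional quotient (families $\mathcal{B}$, $\mathcal{F}$, $\mathcal{G}$), or quotes the theorems of \cite{WW} for type $(4,4,4)$; in each case \cite[Lemma 1.3]{RZ} then carries strong noetherianity, Auslander regularity and the Cohen--Macaulay property up to $A$. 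Note also a smaller gap in your reduction: checking ``one representative of each family'' does not suffice as stated, since the families are positive-dimensional; you need the further observation, supplied in the paper by Lemma~\ref{twisting} and the twist-equivalences recorded in each example, that every member of a family is a graded twist of a fixed one and that all three properties are preserved under twisting (twisting preserves the graded module category).
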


 The classification result also provides an answer to Fl{\o}ystad-Vatne's question.

\begin{maincorollary}
\label{Thm C}{\it
There exist Artin-Schelter regular algebras of global dimension five of the type $(4, 4, 4, 5, 5)$. There is no Artin-Schelter regular domain that being properly $\Z^2$-graded of the type $(4, 4, 4, 5)$.}
\end{maincorollary}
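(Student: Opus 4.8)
The plan is to deduce both assertions from the completed classification, Theorem~\ref{Thm A}, together with the minimal free resolutions that are produced for the members of $\mathfrak{X}$ in the course of its proof. Recall that for an algebra of the kind under consideration the minimal free resolution of the trivial module $k$ has the self-dual Gorenstein shape
\[
0\to A(-\ell)\to A(-\ell+1)^{2}\to F_3\to F_2\to A(-1)^{2}\to A\to k\to 0,
\]
in which $F_2$ records the defining relations, $F_3$ is its Gorenstein dual, and the whole Hilbert series is determined by the relation degrees together with the Gorenstein parameter $\ell$. Thus the \emph{type} of \cite[Theorem 5.6]{FV} is precisely this degree datum, and for each of $\mathcal{A},\dots,\mathcal{P}$ it can be read off from the reduced Gr\"obner basis and the Hilbert series that accompany its construction.

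For the existence half I would exhibit an explicit witness. Scanning $\mathcal{A},\dots,\mathcal{P}$, one locates a member whose set of obstructions, computed from its reduced Gr\"obner basis, yields Betti numbers $(1,2,5,5,2,1)$ with $F_2$ supported in degrees $4,4,4,5,5$; since that member is proved Artin--Schelter regular of global dimension five in Theorem~\ref{Thm A}, it realizes the type $(4,4,4,5,5)$. This step is a finite verification: compute the minimal free resolution, or equivalently read the Betti numbers off the Hilbert series together with the leading-term analysis, and confirm the degree pattern.

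For the non-existence half the argument runs through completeness. Suppose $A$ were an Artin--Schelter regular, properly $\Z^2$-graded domain of type $(4,4,4,5)$. This type pins down the Betti numbers and hence the Hilbert series, so the first task is to check that the domain and proper-grading hypotheses place $A$ inside the scope of Theorem~\ref{Thm A}, in particular that $\gkdim A\geq 4$. Granting this, Theorem~\ref{Thm A} forces $A$ to be isomorphic, up to switching, to one of $\mathcal{A},\dots,\mathcal{P}$; a final inspection of the catalogue shows that none of these algebras carries the type $(4,4,4,5)$, and the claim follows.

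The genuine difficulty has already been absorbed into Theorem~\ref{Thm A}: it is the completeness of $\mathfrak{X}$ that turns the absence of the type $(4,4,4,5)$ from our list into its non-existence altogether. Within the corollary itself the single point that resists mere inspection is the growth estimate controlling $\gkdim$ for the type $(4,4,4,5)$; were that growth to fall below four, Theorem~\ref{Thm A} would not apply verbatim and a separate obstruction argument would be required. I expect this to be the only real checkpoint, with the remainder of the proof reducing to a finite comparison of the tabulated resolution types.
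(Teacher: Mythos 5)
Your proof is correct in substance, and its existence half is exactly the paper's: the witness you describe is the family $\mathcal{G}$ of Section 5, whose members satisfy $G=G_{min}=\{f_1,\dots,f_5\}$ with total degrees $4,4,4,5,5$ and are proved AS-regular of global dimension five; the paper's proof of Corollary \ref{Thm C} cites precisely this. For the non-existence half you take a genuinely different route: you deduce it from the completeness assertion of Theorem \ref{Thm A}, whereas the paper cites Proposition \ref{type-(4,4,4,5)}, a direct Hilbert-driven Gr\"{o}bner argument showing that every admissible choice of leading words for a type-$(4,4,4,5)$ algebra forces a contradiction in the bigraded Hilbert series. Your derivation is not circular (the corollary is nowhere used in proving Theorem \ref{Thm A}), but it is an indirection: the completeness of $\mathfrak{X}$ over the $(4,4,4,5)$ stratum \emph{is} Proposition \ref{type-(4,4,4,5)}, so your argument ultimately rests on the same computation, only quoted through a larger theorem. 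What the paper's phrasing buys is transparency about where the work happens; what yours buys is that the corollary becomes a purely formal consequence of the classification, at the cost of one extra hypothesis check.

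That check---your flagged ``checkpoint'' that a hypothetical type-$(4,4,4,5)$ algebra has $\gkdim\geq4$, so that Theorem \ref{Thm A} applies---is a real obligation (note that Proposition \ref{type-(4,4,4,5)} is itself proved under the paper's standing assumption $\gkdim A\geq 4$, so the paper's own proof carries the same implicit step), but it is easily discharged rather than being a point where the argument could fail. The resolution type determines the Hilbert series: taking Euler characteristics in the resolution of $k$ over $A^{\rm gr}$ displayed at the beginning of Section 4 gives
\[
H_{A^{\rm gr}}(t)=\big(1-2t+3t^4-3t^6+2t^9-t^{10}\big)^{-1}
=\big((1-t)^2(1-t^2)(1-t^3)^2\big)^{-1},
\]
whose coefficients grow polynomially of degree $4$; hence any such algebra would have $\gkdim A=5\geq4$. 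With this inserted, and with the observation that the type is invariant under switching and graded isomorphism so that none of $\mathcal{A},\dots,\mathcal{P}$ or their switches can carry type $(4,4,4,5)$, your proof is complete.
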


 This work has benefited from the approach of Lyndon words (see Definition \ref{definition-lyndon-word}). In fact, it is one of motivations for us to investigate Artin-Schelter regular algebras by Lyndon words. There are plenty of Artin-Schelter regular algebras of which the obstructions consisted of Lyndon words. In such cases, the Gelfand-Kirillov dimension and the global dimension for these algebras in the question will be same, say $d$, and hence $d-1\leq \#(G)\leq d(d-1)/2$.

\begin{mainquestion}
\label{question D}
{\it Let $k\langle x_1, x_2\rangle$ be a $\Z^2$-graded free algebra with $\deg(x_1)=(1, 0)$ and $\deg(x_2)=(0, 1)$, $\mathfrak{a}$ a homogeneous ideal of $k\langle x_1,x_2\rangle$ and $A=k\langle x_1,x_2\rangle/\mathfrak{a}$. Given an appropriate admissible order on $\{x_1,x_2\}^*$, does the obstructions of $A$ consist of Lyndon words if the algebra $A$ being an Artin-Schelter regular algebra?}
\end{mainquestion}

It turns out that the answer is positive whenever the algebra $A$ in the question is an Artin-Schelter regular algebra which is a domain either of global dimension $\leq4$ or of global dimension $5$ and GK-dimension $\geq4$. We expect to find new evidence to support the positive answer of the question.

The organization of the paper is as follows. In Section 1, we recall the
definition of Artin-Schelter regular algebras and review some basic facts concerning Gr\"{o}bner bases. In Section 2, we deal with the Hilbert series of connected $\Z^r$-graded algebras, the ideas used for the classification of Artin-Schelter regular algebras by the Hilbert driven Gr\"{o}bner basis computations are explained also in this section, those are essential to the paper. We devote next five sections to the classification of five types of the algebras. Our last section is to conclude the main theorems, and to discuss the question above.

Throughout the paper, $k$ is a fixed algebraically closed field of characteristic zero, all algebras are $k$-algebras generated in degree 1. The set of natural numbers $\N=\{ 0, 1, 2, \cdots \}$.

\section{Preliminaries}

In this section, we recall the definition of Artin-Schelter regular algebras, and the homogeneous Gr\"{o}bner basis theory as well as some of its elementary application in the setting of $\Z^s$-grading, where $\Z^s=\underbrace{\Z\times\cdots\times\Z}_s$ with the standard basis $\varepsilon_i=(0,\cdots,1,\cdots,0)$ for $i=1,2,\cdots,s$.

We use the following notations. For $\alpha=(\alpha_1,\cdots,\alpha_s)\in \Z^s$, the norm map $|\cdot|:\Z^s\to\Z$ is given by $|\alpha|=\alpha_1+\cdots+\alpha_s$. For arbitrary $\alpha,\beta\in \Z^s$, by $\alpha>\beta$ we mean $\alpha-\beta\in \N^s\backslash\{0\}$. For simplicity, we write $f(\textbf{t})$ for a multi-variable power series $f(t_1,\cdots,t_s)$, where $\textbf{t}^{\alpha}=t_1^{\alpha_1}\cdots t_s^{\alpha_s}$. Also denote $\omega:\Z^2\to\Z^2$ the switching map given by $\omega(m,n)=(n,m)$.

\subsection{\it Artin-Schelter regular $\Z^s$-graded algebras}\hspace{\fill}

A $\Z^s$-graded algebra $A=\bigoplus_{\alpha\in\Z^s}A_{\alpha}$ is called {\it connected} if $A_{\alpha}=0$ for $\alpha\not\in\N^s$ and $A_0=k$. A connected $\Z^s$-graded algebra $A$ is called \textit{properly} if $A$ is generated by $\bigoplus_{i=1}^s A_{\varepsilon_i}$ with $A_{\varepsilon_i}\neq0$ for all $i=1, 2,\cdots, s$. If $a\in A$ is a homogeneous element of degree $\alpha\in \Z^s$, then $|\alpha|\in \Z$ is called the {\it total degree} of $a$. The homological theory for connected $\Z^s$-graded algebras and modules is analogous to that of connected $\Z$-graded algebras and modules.

\begin{definition}
A connected $\Z^s$-graded algebra $A$ is called \textit{Artin-Schelter regular} (AS-regular, for short) of dimension $d$ if
\begin{enumerate}
\item[(AS1)]
$A$ has finite global dimension $d$;
\item[(AS2)]
$A$ has finite Gelfand-Kirillov dimension ($\gkdim$);
\item[(AS3)]$A$ is Gorenstein; that is, for some $\gamma\in\Z^s$,
\begin{eqnarray*}
\ext^i_A(k_A,A)=\left\{
\begin{array}{ll}
0,&i\neq d,\\
k(\gamma),& i=d,
\end{array}\right.
\end{eqnarray*}
where $k_A$ is the trivial right $A$-module $A/A_{>0}$, and the notation $(\gamma)$ is the degree $\gamma$-shifting on $\Z^s$-graded modules. The multi-index $\gamma$ will be called the \textit{Gorenstein parameter} of $A$.
\end{enumerate}
\end{definition}

For an arbitrary $\Z^s$-graded algebra $A$ and $\Z^s$-graded $A$-module $M$ we associate them with a $\Z$-graded algebra $A^{\rm gr}$ and a $\Z$-graded $A^{\rm gr}$-module $M^{\rm gr}$ defined by
$$
A^{\rm gr}=\bigoplus_{n\in \Z}\Big( \bigoplus_{|\alpha|= n} A_{\alpha} \Big),\qquad
M^{\rm gr}=\bigoplus_{n\in \Z}\Big( \bigoplus_{|\alpha|= n} M_{\alpha} \Big).
$$

If either $A$ or $A^{\rm gr}$ is AS-regular, then all modules $F_n$ in the minimal free resolution of $k_A$ are finitely generated (\cite[Proposition 3.1]{SZ}), which implies
$$
\underline{\rm Hom}_A(F_n,A)^{\rm gr} = \underline{\rm Hom}_{A^{\rm gr}}(F_n^{\rm gr},A^{\rm gr})
$$
for all $n\geq0$. This observation gives rise to the following result immediately.
\begin{lemma}
Let $A$ be a connected $\Z^s$-graded algebra. Then $A$ is AS-regular iff $A^{\rm gr}$ is AS-regular. Moreover, if $A$ is of Gorenstein parameter $\gamma$, then $A^{\rm gr}$ is of Gorenstein parameter $|\gamma|$.
\end{lemma}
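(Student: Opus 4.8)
The plan is to exploit the functor $(-)^{\rm gr}$ taking $\Z^s$-graded modules to $\Z$-graded modules, which simply relabels a homogeneous component $M_\alpha$ by its total degree $|\alpha|$. First I would record its formal properties: it is exact (it changes neither the underlying vector spaces nor the structure maps), it sends a $\Z^s$-graded free module $A(\delta)$ to the $\Z$-graded free module $A^{\rm gr}(|\delta|)$, it preserves finite generation, and it reflects zero objects (a $\Z^s$-graded module vanishes iff its collapse does, the underlying space being unchanged). In particular $(M(\gamma))^{\rm gr}=M^{\rm gr}(|\gamma|)$ for any shift, which will account for the passage from parameter $\gamma$ to $|\gamma|$.

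Since $A$ and $A^{\rm gr}$ coincide as ungraded algebras with the same total-degree growth, (AS2) reads identically for both, $\gkdim A=\gkdim A^{\rm gr}$, so the finiteness of GK-dimension transfers in both directions for free. For (AS1), I would start from the minimal free resolution $F_\bullet\to k_A$ of the trivial module over $A$, which exists for any connected $\Z^s$-graded algebra. Applying $(-)^{\rm gr}$ yields a free resolution $F_\bullet^{\rm gr}\to k_{A^{\rm gr}}$; minimality is preserved because the differentials have entries in the augmentation ideal $A_{>0}$, which maps into $A^{\rm gr}_{>0}$. Hence $F_\bullet^{\rm gr}$ is the minimal free resolution of $k_{A^{\rm gr}}$. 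Because $(-)^{\rm gr}$ reflects zero objects, $F_n=0$ iff $F_n^{\rm gr}=0$, so the two resolutions have the same length; using the standard fact $\gldim A=\pd_A k_A$ (and likewise for $A^{\rm gr}$) we conclude that the two global dimensions are simultaneously finite and equal to a common $d$.

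The decisive point is (AS3). Whenever one of $A,A^{\rm gr}$ is AS-regular, the observation preceding the statement (via \cite[Proposition 3.1]{SZ}) guarantees that each $F_n$ is finitely generated and supplies the identity $\underline{\rm Hom}_A(F_n,A)^{\rm gr}=\underline{\rm Hom}_{A^{\rm gr}}(F_n^{\rm gr},A^{\rm gr})$. I would apply $(-)^{\rm gr}$ to the complex $\underline{\rm Hom}_A(F_\bullet,A)$ computing $\ext^\bullet_A(k_A,A)$; by exactness $(-)^{\rm gr}$ commutes with cohomology, and by the identity the resulting complex is precisely $\underline{\rm Hom}_{A^{\rm gr}}(F_\bullet^{\rm gr},A^{\rm gr})$, which computes $\ext^\bullet_{A^{\rm gr}}(k_{A^{\rm gr}},A^{\rm gr})$. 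This yields the functorial isomorphism
$$\ext^i_{A^{\rm gr}}(k_{A^{\rm gr}}, A^{\rm gr}) \cong \big(\ext^i_A(k_A, A)\big)^{\rm gr}\qquad(i\geq 0).$$
Since $(-)^{\rm gr}$ reflects zero objects, the vanishing required in (AS3) for $i\neq d$ holds for $A$ iff it holds for $A^{\rm gr}$. For $i=d$: if $\ext^d_A(k_A,A)=k(\gamma)$, its collapse is $k(|\gamma|)$, so $A^{\rm gr}$ is Gorenstein of parameter $|\gamma|$; conversely, if $\ext^d_{A^{\rm gr}}(k_{A^{\rm gr}},A^{\rm gr})=k(\gamma')$ is one-dimensional, then so is $\ext^d_A(k_A,A)$, forcing it to be $k(\gamma)$ with $|\gamma|=\gamma'$. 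Combining the three conditions in both directions gives the equivalence together with the parameter relation.

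I expect the only genuine obstacle to be the justification that $F_\bullet^{\rm gr}$ is again minimal and the careful bookkeeping behind the Hom identity: finite generation of the $F_n$ is exactly what makes the graded-Hom functor commute with $(-)^{\rm gr}$, and this is precisely the input furnished by \cite[Proposition 3.1]{SZ}. Everything else is formal manipulation with the exact, zero-reflecting functor $(-)^{\rm gr}$ and the shift compatibility $(M(\gamma))^{\rm gr}=M^{\rm gr}(|\gamma|)$.
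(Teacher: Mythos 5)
Your proof is correct and takes essentially the same route as the paper: the paper deduces the lemma directly from the observation that, when either $A$ or $A^{\rm gr}$ is AS-regular, the modules $F_n$ in the minimal free resolution of $k_A$ are finitely generated by \cite[Proposition 3.1]{SZ}, which yields the identity $\underline{\rm Hom}_A(F_n,A)^{\rm gr}=\underline{\rm Hom}_{A^{\rm gr}}(F_n^{\rm gr},A^{\rm gr})$ and hence the transfer of (AS1)--(AS3) with parameter $|\gamma|$. Your write-up simply makes explicit the formal properties of the collapse functor $(-)^{\rm gr}$ (exactness, preservation of free modules and minimality, reflection of zero objects, shift compatibility) that the paper leaves implicit in calling the result immediate.
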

In general, one can change a $\Z^s$-graded algebra $A$ into a $\Z^{\bar s}$-graded algebra $A^{\varphi}$ by an additive map $\varphi:\Z^s \to \Z^{\bar s}$ of abelian groups in a natural way. In particular, we call $A^\omega$ the {\it switching algebra\/} of $A$ when applying it to the switching map $\omega$ on the $\Z^2$-graded algebra.

\subsection{\it Gr\"{o}bner bases}\hspace{\fill}

In this subsection we firstly establish notations and briefly
review the noncommutative Gr\"{o}bner basis theory,
and then present some technical results that is used
in the sequel. A detailed treatment can
be found in \cite{Ge,Li,Mo}.

Throughout $X$ stands for a finite alphabet of letters. The free algebra on $X$ is denoted by $k\langle X\rangle$. The set of all words on $X$, denoted $X^*$ which contains the empty word ``1'', forms a basis of $k\langle X\rangle$. A word $u$ is nontrivial if $u\neq1$. For words $u$ and $v$, we say that $u$ is a \textit{factor} of $v$ if $w_1uw_2=v$ for some words $w_1,w_2\in X^*$. If $w_1=1$ then $u$ is called a \textit{prefix} of $v$, and if $w_2=1$ then $u$ is called a \textit{suffix} of $v$. A factor $u$ of $v$ is called \textit{proper} if $u\neq v$. A subset $V \subseteq X^*$ is called an {\it antichain} of words if $1\not\in V$ and every word $v\in V$ has no proper factor in $V$.

In the sequel we also fix an \textit{admissible ordering} $>$ on $X^*$, that is a well ordering on $X^*$ satisfying two properties: (i) $u>1$ for all nonempty word $u\in X^*$; (ii) $u>v$ implies $w_1uw_2>w_1vw_2$ for all words $u,v,w_1,w_2\in X^*$. For any nonzero polynomial $f\in k\langle X\rangle$, the \textit{leading word} $\lw(f)$ of $f$ is the largest word occurs in $f$, and the \textit{leading coefficient} $\lc(f)$ of $f$ is the coefficient of $\lw(f)$ in $f$. A nonzero polynomial with leading coefficient $1$ is said to be \textit{monic}. For any set $G\subseteq k\langle X\rangle$, denote
\begin{eqnarray*}
\lw(G)=\{\ u\in X^* : u =\lw(g) \text{ for some } g\in G\ \}, \\
\nw(G)=\{\ u\in X^* : u \; \text{ has no factor in } \lw(G)\; \}.\;
\end{eqnarray*}
A polynomial $f$ is called \textit{normal modulo $G$} if
every word occurred in $f$ belongs to $\nw(G)$. A polynomial $r$ is called a \textit{remainder of $f$ modulo $G$} if $r$ is normal modulo $G$ and
$$
f=\sum_{i\in I}a_iu_ig_iv_i+r,
$$
where each $a_i\in k$, $u_i,v_i\in X^*$, $g_i\in G\backslash\{0\}$
and $u_i\lw(g_i)v_i\leq\lw(f)$. Note that every polynomial has a remainder modulo $G$. If $0$ is a remainder of $f$ modulo $G$, then we say $f$ is \textit{trivial modulo $G$}.

\begin{definition}
Let $G$ be a set of polynomials in $k\langle X\rangle$.
If $\lw(f)\in\nw(G\backslash\{f\})$ for every
$f\in G$, then we say $G$ is \textit{tip-reduced}. If $f$
is monic and normal modulo $G\backslash\{f\}$ for every
$f\in G$, then we say $G$ is \textit{reduced}.
For an ideal $\mathfrak{a}$ of $k\langle X\rangle$, a
\textit{Gr\"{o}bner basis of $\mathfrak{a}$} is a subset $G\subseteq \mathfrak{a}$ such that $\lw(f)\not\in\nw(G)$ for every nonzero element $f\in\mathfrak{a}$.

Note that every ideal of $k\langle X\rangle$ admits a unique reduced Gr\"{o}bner basis, and it is generated by any one of its Gr\"{o}bner bases.
When $G$ is the reduced Gr\"{o}bner basis of $\mathfrak{a}$, $\lw(G)$ is called \textit{obstructions} of the algebra $k\langle X\rangle/\mathfrak{a}$.
\end{definition}

A $4$-tuple $(l_1,r_1,l_2,r_2)$ of words is called an \textit{ambiguity} of
a pair of words $(u_1,u_2)$ in case $l_1u_1r_1=l_2u_2r_2$ and
one of the following conditions holds: (1) $l_1=r_1=1$;
(2) $l_1=r_2=1$, $r_1$ is a nontrivial proper suffix of $u_2$
and $l_2$ is a nontrivial proper prefix of $u_1$.
We define a \textit{composition} of nonzero polynomials $f_1,f_2$ to be a polynomial of the form
\[S(f_1,f_2)[l_1,r_1,l_2,r_2]=\frac{l_1f_1r_1}{\lc(f_1)}-\frac{l_2f_2r_2}{\lc(f_2)},\] where $(l_1,r_1,l_2,r_2)$ is an ambiguity of $\big(\lw(f_1),\lw(f_2)\big)$.

The following result is a truncated version of Bergman's diamond lemma \cite{Berg}, and is crucial in the sequel. The result follows from a trivial variation of the discussion in the proof of \cite[Theorem 3.6.1]{Ge}. See also \cite[Theorem 5.1]{Mo} and \cite[Theorem 1.2]{Berg}.

\begin{lemma}
\label{truncated diamond lemma}
Assume that the free algebra $k\langle X\rangle$ is $\Z^s$-graded by a set map $e:X\to \N^s\backslash\{0\}$. Let $\mathfrak{a}$ be a homogeneous ideal of $k\langle X\rangle$ and $A=k\langle X\rangle/\mathfrak{a}$.
Let $G\subseteq \mathfrak{a}$ be any homogeneous set and
and $R=k\langle X\rangle/(\lw(G))$.
then for any $\alpha\in\N^s$ the following are equivalent:
\begin{enumerate}
\item $\lw(f)\not\in \nw(G)$ for any nonzero homogeneous element $f\in\mathfrak{a}$ with degree $\leq\alpha$.
\item Any homogeneous element $f\in\mathfrak{a}$ with degree $\leq\alpha$ is trivial modulo $G$.
\item The set of normal words of degree $\beta$ modulo $G$ is a basis of $A_{\beta}$ for every $\beta\leq\alpha$.
\item $\dim A_{\beta}=\dim R_\beta$ for every $\beta\leq\alpha$.
\item Any composition of nonzero polynomials in $G$ with degree $\leq\alpha$ is trivial modulo $G$ and $(G)_{\beta}=\mathfrak{a}_{\beta}$ for every $\beta\leq\alpha$.
\end{enumerate}
Moreover, $G$ is a Gr\"{o}bner basis of $\mathfrak{a}$ if and only if one of the equivalent conditions listed above holds for all $\alpha\in \N^s$.
\end{lemma}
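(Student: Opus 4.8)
The plan is to prove the five conditions equivalent through a short cycle of elementary leading-word arguments, reserving the genuine content of Bergman's lemma for the single implication $(5)\Rightarrow(2)$. Two facts will be used throughout. First, for every $\beta$ the images of the normal words of degree $\beta$ (those lying in $\nw(G)$) always span $A_\beta$: if a word $w=u\,\lw(g)\,v$ contains a factor $\lw(g)$ with $g\in G$ taken monic, then modulo $\mathfrak a$ one may replace that factor by $\lw(g)-g$, a combination of strictly smaller words, and one concludes by Noetherian induction on the well-ordering. Hence $\dim A_\beta\le\dim R_\beta$, with equality precisely when these images are linearly independent in $A$; this already gives $(3)\Leftrightarrow(4)$. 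Second, a nonzero polynomial all of whose words are normal has a normal leading word, whereas $(1)$ asserts that no nonzero element of $\mathfrak a$ of degree $\le\alpha$ has a normal leading word.

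For $(1)\Leftrightarrow(2)$ I would induct on $\lw(f)$: if $(1)$ holds and $0\neq f\in\mathfrak a$ has degree $\le\alpha$, then $\lw(f)=u\,\lw(g)\,v$ for some $g\in G$, so subtracting the appropriate multiple of $ugv$ strictly lowers the leading word while staying in $\mathfrak a$, and iterating produces $0$ as a remainder; conversely, if $f$ is trivial modulo $G$ then matching leading terms in $f=\sum a_i u_i g_i v_i$ forces $\lw(f)$ to equal some $u_j\lw(g_j)v_j$, so $\lw(f)\notin\nw(G)$. The implication $(1)\Rightarrow(3)$ is then immediate from the second fact, since a nonzero normal polynomial cannot lie in $\mathfrak a$, giving linear independence which together with spanning yields a basis. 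For $(3)\Rightarrow(1)$, given $0\neq f\in\mathfrak a$ of degree $\le\alpha$ I would reduce it to a remainder $r$ modulo $G$; then $f-r\in(G)\subseteq\mathfrak a$ forces $r\in\mathfrak a$, and since $r$ is normal the linear independence in $(3)$ gives $r=0$, so $\lw(f)$ was reducible.

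The implication $(2)\Rightarrow(5)$ is routine: each composition $S(f_1,f_2)[\cdots]$ lies in $\mathfrak a$ and has degree $\le\alpha$, hence is trivial modulo $G$ by $(2)$; and $(2)$ shows every element of $\mathfrak a_\beta$ equals some $\sum a_i u_i g_i v_i$, giving $\mathfrak a_\beta\subseteq(G)_\beta$ and so $(G)_\beta=\mathfrak a_\beta$. The substantive step, and the one I expect to be the main obstacle, is $(5)\Rightarrow(2)$: this is exactly the confluence half of the diamond lemma. Here I would run a Noetherian induction on the degree $\alpha\in\N^s$, well-founded because $e$ takes values in $\N^s\setminus\{0\}$ so that each degree is reached by only finitely many words. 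The heart is to show that triviality of all compositions of degree $\le\alpha$ forces the reduction relations to be \emph{confluent} up to degree $\alpha$: any two reduction sequences applied to a word can be completed to a common value, and the only genuine failures of local confluence occur at overlap ambiguities, which are precisely resolved by the hypothesis that the corresponding compositions reduce to $0$. Confluence yields a unique normal form, so the normal words form a basis of $\big(k\langle X\rangle/(G)\big)_\beta$ for $\beta\le\alpha$; the condition $(G)_\beta=\mathfrak a_\beta$ then identifies this space with $A_\beta$, delivering $(3)$ and hence $(2)$. This is the familiar but delicate bookkeeping of Bergman's argument, which goes through once restricted to a degree-filtered piece, as a trivial variation of \cite[Theorem 3.6.1]{Ge}.

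Finally, the ``moreover'' clause is formal. Since $\mathfrak a$ is homogeneous and $e$ is $\N^s\setminus\{0\}$-valued, every word carries a well-defined degree and the leading word of any nonzero $f\in\mathfrak a$ coincides with the leading word of one of its homogeneous components; thus the statement that $\lw(f)\notin\nw(G)$ for all nonzero $f\in\mathfrak a$, which is the definition of $G$ being a Gr\"obner basis, is precisely condition $(1)$ holding for every $\alpha\in\N^s$.
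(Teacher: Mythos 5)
Your proposal is correct, but note that the paper itself offers \emph{no} proof of this lemma: it simply asserts that the result ``follows from a trivial variation of the discussion in the proof of \cite[Theorem 3.6.1]{Ge}'' with further pointers to \cite{Mo} and \cite{Berg}. Your write-up therefore supplies more than the paper does, and it does so in the standard way: the elementary equivalences $(1)\Leftrightarrow(2)$, $(1)\Leftrightarrow(3)$, $(3)\Leftrightarrow(4)$ and the implication $(2)\Rightarrow(5)$ are handled by leading-word, spanning and linear-independence arguments (all of which you execute correctly, including the matching-of-leading-terms step in $(2)\Rightarrow(1)$ and the use of a remainder in $(3)\Rightarrow(1)$), while the genuine content is isolated in $(5)\Rightarrow(2)$, the confluence half of Bergman's diamond lemma, which you defer to exactly the reference the paper cites; your logical cycle closes correctly via $(5)\Rightarrow(3)\Rightarrow(1)\Rightarrow(2)$, and you rightly invoke the hypothesis $(G)_\beta=\mathfrak{a}_\beta$ for $\beta\leq\alpha$ to pass from $k\langle X\rangle/(G)$ to $A$ (this is the part that would fail if one forgot that $G$ need not generate $\mathfrak{a}$). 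Two minor points of precision, neither affecting correctness: the paper's notion of composition encompasses both overlap ambiguities and \emph{inclusion} ambiguities (the case $l_1=r_1=1$ in its definition of ambiguity), so in the confluence step one should say that failures of local confluence are resolved at ambiguities of either kind, not only at overlaps; and in your spanning argument the chosen $g\in G$ should be divided by its leading coefficient before the substitution, a harmless normalization. Your treatment of the ``moreover'' clause, reducing arbitrary elements of the homogeneous ideal $\mathfrak{a}$ to their homogeneous components via $\lw(f)=\lw(f_\beta)$, is also correct.
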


\subsection{\it Invariants of $\Z^s$-graded algebras}
\hspace{\fill}

Throughout this subsection $k\langle X\rangle$ stands for a connected $Z^s$-graded algebra with grading given by a set map $e:X\to \N^s\backslash\{0\}$, $\mathfrak{a}$ is a homogeneous ideal of $k\langle X\rangle$ and $A=k\langle X\rangle/\mathfrak{a}$. We fix an admissible order on $X^*$ and let $G$ be the reduced Gr\"{o}bner basis of $\mathfrak{a}$. So $G$ consists of homogeneous elements. Anick constructed in \cite{An2} a resolution of the trivial module $k_A$, which gives rise to several efficient applications in both theory and calculations
of associative algebras, see \cite{An1,An2,GI1}. For reader's convenience, we recall some definitions and results from \cite{An1,An2,GI1} which are frequently used in the sequel, in particular in computing the Hilbert series and the global dimension of certain algebras.

The notion of $n$-chains, which is the ``bricks'' of Anick's resolution, was introduced by Anick in \cite{An2}. Later in \cite{Uf}, the author gives a slightly different but equivalent definition, and present an interpretation of chains in terms of graphs. To better understand and use Anick's resolution from a computational viewpoint, we quote the graphic interpretation as the definition of an $n$-chain.

\begin{definition}
Let $V$ be an antichain of words. The {\it graph of chains on $V$} is the directed graph $\Gamma(V)$, whose set of vertices consists of letters in $X \backslash V$ and all proper suffix of words in $V$ that is of length $\geq2$, and whose set of arrows is defined by saying  $u\to v$ if and only if $uv$ has a factor in $V$ and $uv'$ has no factor in $V$ for every proper prefix $v'$ of $v$. For $n\geq1$, a word $w$ is called an {\it $n$-chain on $V$} if there is an expression $w=v_1v_2\cdots v_n$ for some path $v_1\to v_2\to \cdots\to v_n$ in $\Gamma(V)$ with $v_1\in X$. A {\it $0$-chain on $V$} is the empty word $1$. Denote by $C_n(V)$ the set of all $n$-chains on $V$ for all $n\geq0$. Readily one has $C_{0}(V)=\{1\}$, $C_{1}(V)=X\backslash V$ and $C_{2}(V)=V\backslash X$. It is worth noting that the path factorization in $\Gamma(V)$ of an $n$-chain is uniquely determined.
\end{definition}

The $n$-chain defined here is named an $(n\!-\!1)$-chain in \cite{An1,An2,GI1}, and thus their results should be changed accordingly in our contexts. Let $C_n=C_n(\lw(G))$ for all $n\geq0$, then Anick's resolution of $k_A$, constructed in \cite[Theorem 1.4]{An2}, is a graded free resolution of the form
\begin{eqnarray}
\cdots
\xrightarrow{d_3} kC_2\otimes A
\xrightarrow{d_2} kC_1\otimes A
\xrightarrow{d_1} kC_0\otimes A
\to k_A
\to 0, \nonumber
\end{eqnarray}
which derives the ($s$-variable) Hilbert series and the global dimension of $A$ as
\begin{eqnarray}
\label{formula-hilbert-series}
H_A(\mathbf{t}) = \big(\textstyle\sum_{n\geq0} (-1)^n H_{kC_n}(\mathbf{t})\big)^{-1},  \nonumber\\
\gldim A  \leq \max \{\ n\geq0: C_n\neq\emptyset\ \}.\; \nonumber
\end{eqnarray}
In general, the Anick's resolution is not minimal, but it is minimal if $G$ is an antichain of words (see \cite[Lemma 3.3]{An2}), and hence in this situation one has
$$
\gldim A =\max \{\ n\geq0: C_n\neq\emptyset\ \}.
$$

The following result can be extracted from \cite[Theorem II]{GI1}, which provides an efficient way to calculate the global dimension of connected graded algebras.

\begin{theorem}
\label{theorem-global-dimension}
The notations and assumptions as above. Assume that $A$ has finite GK-dimension and $d=\max\{\ n\geq0:  C_n\neq\emptyset\ \} < \infty$, then the following hold:
\begin{enumerate}
\item $G$ is a finite set;
\item $\gldim A=\gkdim A=d$, and $A$ has polynomial growth of degree $d$;
\item $H_A(\textbf{t}) = \prod_{i=1}^d (1-\textbf{t}^{\alpha(i)})^{-1}$
    for some $\alpha(1),\cdots,\alpha(d) \in \N^s\backslash\{0\}$.
\end{enumerate}
\end{theorem}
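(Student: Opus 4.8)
The plan is to reduce everything to the associated monomial algebra and then read off all three assertions from the combinatorics of Anick chains, invoking the overlap-graph analysis behind \cite[Theorem II]{GI1}. First I would record the two structural facts that make this reduction legitimate. Since $G$ is the reduced Gr\"obner basis of $\mathfrak a$, each $\lw(g)$ is normal modulo $G\setminus\{g\}$, so $\lw(G)$ is an antichain of words; by the discussion preceding the statement, Anick's resolution of $k_A$ is then minimal and $\gldim A=\max\{n:C_n\neq\emptyset\}=d$, which already settles the global-dimension half of (2). Next, because $G$ is a Gr\"obner basis, Lemma~\ref{truncated diamond lemma}(4) gives $\dim A_\beta=\dim R_\beta$ for all $\beta$, where $R=k\langle X\rangle/(\lw(G))$; hence $A$ and $R$ share the same Hilbert series and the same $\gkdim$, while the identity $C_n=C_n(\lw(G))$ shows that they share the entire Anick complex. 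Thus $\gkdim$, growth and $H_A(\textbf{t})$ may all be computed on the monomial algebra $R$, and $G$ is finite if and only if the obstruction set $\lw(G)$ is finite.

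The heart of the argument is the finiteness in (1). Since $X$ is finite, an infinite $\lw(G)$ would contain obstructions of unbounded length, and the plan is to show that such a family is incompatible with the pair of hypotheses $d<\infty$ and $\gkdim A<\infty$. The idea is that unboundedly long obstructions that overlap cascade into arbitrarily long overlap chains in $\Gamma(\lw(G))$, forcing $C_n\neq\emptyset$ for all $n$ and contradicting $d<\infty$ (this is exactly the behaviour of a toy family such as $\{\,yx^ny : n\geq1\,\}$), whereas obstructions that do not overlap leave too many normal words and violate $\gkdim A<\infty$. Ruling out both cases simultaneously is what pins $\lw(G)$ to a finite set; this is the delicate combinatorial input supplied by \cite[Theorem II]{GI1}. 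Once $\lw(G)$ is finite, $G$ is finite and $A$ is finitely presented.

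With $\lw(G)$ finite, set $m=\max\{|w|:w\in\lw(G)\}$ and pass to Ufnarovski's overlap graph $\widehat\Gamma$ on the normal words of length $m-1$, whose paths enumerate the normal words of $R$ (see \cite{Uf}). By Ufnarovski's criterion, $\gkdim R<\infty$ forces $\widehat\Gamma$ to have no vertex lying on two distinct cycles, and then $\gkdim R$ equals the largest number of distinct cycles traversed by a single path, with $\dim R_N$ of polynomial growth of that degree. The remaining step of (2) is to identify this cycle count with $d$: I would match a maximal route through the nested cycles of $\widehat\Gamma$ with a maximal Anick chain in $\Gamma(\lw(G))$, giving $\gkdim A=\gldim A=d$ and polynomial growth of degree $d$. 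For (3), the normal words carried by such a disjoint-cycle graph are counted multiplicatively, one geometric factor per cycle, so $H_A(\textbf{t})=\prod_{i=1}^d(1-\textbf{t}^{\alpha(i)})^{-1}$ with $\alpha(i)\in\N^s\setminus\{0\}$ the multidegree of the $i$-th cycle; equivalently, the polynomial $H_A(\textbf{t})^{-1}=\sum_{n=0}^d(-1)^nH_{kC_n}(\textbf{t})$ vanishes to order exactly $d$ along the total-degree-one locus and factors accordingly.

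The main obstacle is this combinatorial core: simultaneously using the two finiteness hypotheses to force $\lw(G)$ finite, and then extracting the exact product factorization of the Hilbert series from the nested-cycle structure of $\widehat\Gamma$. By contrast, the reduction to $R$ and the equality $\gldim A=d$ are immediate from the tools already in place, so the real work lies entirely on the side of the overlap-graph analysis recorded in \cite[Theorem II]{GI1}.
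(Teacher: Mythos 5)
Your reduction of the Hilbert series and the GK-dimension to the associated monomial algebra $R$ is fine, and deferring the finiteness of $\lw(G)$ and the cycle analysis to \cite[Theorem II]{GI1} and \cite{Uf} is essentially what the paper itself does (it gives no proof beyond that citation). But your opening step contains a genuine error, and it is exactly the step on which you hang the equality $\gldim A=d$. You read the paper's sentence ``the Anick resolution is minimal if $G$ is an antichain of words'' as ``minimal if $\lw(G)$ is an antichain.'' Since $\lw(G)$ is an antichain for \emph{every} reduced Gr\"{o}bner basis, that reading would make Anick's resolution always minimal, contradicting the clause immediately before it (``in general, the Anick resolution is not minimal''). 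The intended meaning, and the content of \cite[Lemma 3.3]{An2}, is that $G$ itself consists of words, i.e.\ that $A$ is a \emph{monomial} algebra.

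The inference is not merely unjustified but false. Take $A=k\langle x,y\rangle/(y^2-x^2)$ with the deg-lex order, $y>x$. Its reduced Gr\"{o}bner basis is $G=\{\,y^2-x^2,\ yx^2-x^2y\,\}$, so $\lw(G)=\{y^2,\,yx^2\}$ is an antichain, and the loop $y\to y$ in the chain graph gives $C_n\neq\emptyset$ for all $n$. Yet setting $a=x+y$, $b=x-y$ one has $ab+ba=2(x^2-y^2)$ in the free algebra, so $A\cong k\langle a,b\rangle/(ab+ba)$, a skew polynomial ring of global dimension $2$. Thus ``reduced $\Rightarrow$ minimal $\Rightarrow \gldim A=\max\{n:C_n\neq\emptyset\}$'' fails badly: Anick's resolution only yields the inequality $\gldim A\le d$. (This example has $d=\infty$, so it does not contradict the theorem, only your argument for it.) Nor can the reverse inequality be recovered from your reduction to $R$, because global dimension, unlike the Hilbert series and $\gkdim$, is not determined by $\lw(G)$: in the example above $\gldim R=\infty$ while $\gldim A=2$. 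Establishing $\gldim A\ge d$ under the hypotheses $d<\infty$ and $\gkdim A<\infty$ is precisely part of the nontrivial content of \cite[Theorem II]{GI1}, so as written your proposal leaves one of the theorem's main assertions resting on a false lemma.
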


%-----------------------------------------
%\begin{example}
%$A=k\langle x,y,z\rangle/(x^2+y^2+z^2)$ with $\deg(x)=\deg(y)=\deg(z)=1$.
%By the minimal resolution of $A$, which can easily write out, one gets that: (i) $\gldim A=2$;
%(ii) $\ext_A^{i}(k,A)=0$ for $i\neq2$ and $\ext_A^2(k,A)=k(2)$;
%(iii) $H_A(t)=\frac{1}{1-3t+t^2}$ which implies that $A$ is of exponential growth.
%In the other hand, w.r.t. the deg-lex order given by $x>y>z$, the reduced Gr\"{o}bner basis associated
%to $\{x^2+y^2+z^2\}$ is $\{x^2+y^2+z^2,[x,y^2+z^2]\}$ and there exists  $n$-chains on $\{x^2,xy^2\}$
%for arbitrary $n\geq0$.
%\end{example}
%----------------------------------------

Now we consider the situation when $\lw(G)$ has good combinatorial properties.

\begin{definition}
\label{definition-lyndon-word}
Assume $X=\{x_1,\cdots,x_n\}$. The \textit{lex order} on $X^*$ is given by saying $u>_{lex} v$ iff there are factorizations $u=rx_is,v=rx_jt$ with $i>j$. The \textit{plex order} on $X^*$ is given by saying that $u>_{plex}v$ iff either $u$ is a proper prefix of $v$ or $u>_{lex}v$. A word $u\in X^*$ is called a \textit{Lyndon word} if $u$ is nonempty and $u>_{lex} wv$ for every factorization $u=vw$ with $v,w\neq1$. The set of Lyndon words that is normal modulo $G$ is denoted by $\mathfrak{L}(G)$.
\end{definition}

\begin{remark}
The definition of plex order and Lyndon words follows the one given in \cite{BC}. In \cite{GF,Lo}, the \textit{plex order} on $X^*$ is defined by saying $u>_{plex}v$ iff either $u$ contains $v$ as a proper prefix or $u>_{lex}v$, and a word $u\in X^*$ is said to be Lyndon if $u$ is nonempty and $u<_{lex} wv$ for every factorization $u=vw$ with $v,w\neq1$. So the results in \cite{GF,Lo} need to be changed accordingly in our context. Mainly, one needs to replace $>_{plex}$ (resp. $\geq_{plex}$) by $<_{plex}$ (resp. $\leq_{plex}$). Overall, it is just a taste of flavor.
\end{remark}

A combination of  \cite[Theorem II]{GI1} and \cite[Theorem A,Theorem B]{GF} gives rise to the following result readily, which provides a more efficient way to calculate invariants of certain graded algebras.

\begin{theorem}
\label{theorem-Lyndon}
The notations and assumptions as above. Assume $X=\{x_1,\cdots,x_n\}$ and $\lw(G)$ consisting of Lyndon words, then the following hold:
\begin{enumerate}
\item
$\nw(G)=\{u_1u_2\cdots u_s\ |\ u_1\leq_{plex}u_2\leq_{plex}
\cdots\leq_{plex}u_s\in \mathfrak{L}(G),\ s\geq1\}\cup \{1\}$.
\item
$H_A(\textbf{t})=\prod_{u\in \mathfrak{L}(G)}(1-\textbf{t}^{\deg(u)})^{-1}$.
\item
$\gkdim A=\#(\mathfrak{L}(G))$. In particular, $A$ has polynomial growth iff
$\mathfrak{L}(G)$ is finite.
\item
If $G$ is a finite set, then $\gldim(A)\leq \#(G)+1$.
\item
If $A$ has polynomial growth, then $\gkdim A=\gldim A=\max\{\ n\geq0:  C_n\neq\emptyset\ \}<\infty$. Denote the common number by $d$,
then one has
$d-1\leq \#(G)\leq d(d-1)/2$.
\end{enumerate}
\end{theorem}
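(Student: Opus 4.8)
The plan is to assemble the stated facts from the combinatorics of Lyndon words together with the minimality of Anick's resolution, essentially gluing \cite[Theorem A, Theorem B]{GF} to \cite[Theorem II]{GI1} after translating the former into the conventions fixed in the Remark following Definition \ref{definition-lyndon-word} (replacing $>_{plex}$ by $<_{plex}$ throughout, and dually reversing the defining Lyndon inequality). The running hypothesis I would exploit is that $G$ is the \emph{reduced} Gr\"{o}bner basis: then $G$ is tip-reduced, $\lw(G)$ is an antichain, the map $g\mapsto\lw(g)$ is a bijection $G\to\lw(G)$ (so $\#(G)=\#(\lw(G))$), and every proper factor of a word in $\lw(G)$ is normal modulo $G$. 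Since $A$ is properly graded and generated in degree one, no generator lies in $\lw(G)$, so each obstruction has length $\geq2$ and $\lw(G)=C_2$.

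First I would prove (1). As $\lw(G)$ is an antichain of Lyndon words, \cite[Theorem A]{GF} describes the normal words: a word is normal modulo $G$ precisely when its Chen--Fox--Lyndon factorization into non-increasing Lyndon words involves only Lyndon words that are themselves normal modulo $G$. Rewriting the non-increasing lexicographic factorization in terms of the plex order of Definition \ref{definition-lyndon-word} turns it into the chain $u_1\leq_{plex}u_2\leq_{plex}\cdots\leq_{plex}u_s$ with each $u_i\in\mathfrak{L}(G)$, which, adjoining the empty word, is exactly the asserted description of $\nw(G)$.

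Parts (2) and (3) are then formal. By (1) a nonempty normal word is the same datum as a finitely supported multiplicity function on $\mathfrak{L}(G)$, obtained by listing the chosen Lyndon words in non-decreasing plex order; since $\nw(G)$ is a $k$-basis of $A$, counting monomial by monomial gives $H_A(\mathbf{t})=\prod_{u\in\mathfrak{L}(G)}(1+\mathbf{t}^{\deg(u)}+\mathbf{t}^{2\deg(u)}+\cdots)=\prod_{u\in\mathfrak{L}(G)}(1-\mathbf{t}^{\deg(u)})^{-1}$, which is (2). Passing to $A^{\mathrm{gr}}$ replaces each factor by $(1-t^{|\deg(u)|})^{-1}$, and the Gelfand--Kirillov dimension of a finitely generated connected graded algebra is the order of the pole of its Hilbert series at $t=1$; a product of $\#(\mathfrak{L}(G))$ such factors has a pole of exactly that order, while an infinite product forces $\gkdim A=\infty$. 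This yields (3), and in particular $A$ has polynomial growth iff $\mathfrak{L}(G)$ is finite.

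For (4) and (5) I would pass to Anick's resolution, which is minimal because $\lw(G)$ is an antichain, so $\gldim A=\max\{n\geq0:C_n\neq\emptyset\}$. The key combinatorial input, extracted from \cite[Theorem B]{GF}, is that over Lyndon obstructions an $n$-chain is laid out along $n-1$ \emph{distinct} obstructions of $\lw(G)$; hence $C_n\neq\emptyset$ forces $n-1\leq\#(\lw(G))=\#(G)$, giving $\gldim A\leq\#(G)+1$ when $G$ is finite, which is (4). For (5), polynomial growth makes $\mathfrak{L}(G)$ and $G$ finite, and Theorem \ref{theorem-global-dimension} then gives $\gkdim A=\gldim A=\max\{n:C_n\neq\emptyset\}=:d<\infty$ with $d=\#(\mathfrak{L}(G))$; the lower bound $d-1\leq\#(G)$ is immediate from (4). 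For the upper bound I would send each obstruction $w\in\lw(G)$ to its standard (Shirshov) factorization $w=w'w''$: both $w'$ and $w''$ are Lyndon and are proper factors of $w$, hence normal, so $w',w''\in\mathfrak{L}(G)$ with $w'\neq w''$; since $w$ is recovered from the ordered pair and only one ordering of a given pair can be the standard factorization of a Lyndon word, this is an injection of $\lw(G)$ into the $\binom{d}{2}$ two-element subsets of $\mathfrak{L}(G)$, giving $\#(G)\leq d(d-1)/2$. The main obstacle is the chain-length estimate feeding (4): verifying, via the Lyndon-word structure of \cite{GF}, that the obstructions encountered along an Anick $n$-chain are genuinely distinct, which is what pins down both $\gldim A\leq\#(G)+1$ and the whole interplay between the number of relations and the Lyndon combinatorics.
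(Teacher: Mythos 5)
Your proposal is correct and takes essentially the same route as the paper: the paper gives no detailed argument at all, stating only that the theorem follows readily from combining \cite[Theorem II]{GI1} with \cite[Theorem A, Theorem B]{GF}, which is exactly the combination you carry out (with the plex-order convention translation, the Chen--Fox--Lyndon counting for parts (1)--(3), the chain-distinctness input for (4), and the standard-factorization injection for the bound $\#(G)\leq d(d-1)/2$ filled in explicitly). Your write-up is a faithful expansion of the paper's one-line proof.
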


\subsection{\it Twisting algebra}
\hspace{\fill}

Let $A$ be a connected $\Z^s$-graded algebra and $\tau:A\to A$ a graded automorphism of algebras. We define a new $\Z^s$-graded algebra $A^{\tau}$ by $A^{\tau}=A$ as an $\Z^s$-graded $k$-module with a new multiplication defined by $a*b=a*_{\tau}b=a\tau^{|\alpha|}(b)$ for all $a\in A_{\alpha}$ and $b\in A_{\beta}$. We call $A^{\tau}$ the \textit{twisting $\Z^s$-graded algebra} of $A$ by $\tau$. It is easy to see that $(A^{\tau})^{\rm gr}=(A^{\rm gr})^\tau$.

Let $A,B$ be two $\Z^s$-graded algebras, $f:A\to B$ a graded homomorphism of $\Z^s$-graded algebras, let $\tau$ and $\sigma$ the $\Z^s$-graded automorphisms of $A$ and $B$, respectively. Then $f:A^{\tau} \to B^{\sigma}$ is a homomorphism of $\Z^s$-graded algebras whenever $f\circ\tau=\sigma\circ f$ holds. In particular, $\tau$ and $\tau^{-1}$ are also graded automorphisms of $A^{\tau}$ and $(A^{\tau})^{\tau^{-1}}=A$.
The following result is also crucial in the sequel. It helps us to reduce variables in the computations.

\begin{lemma}
\label{twisting}
Assume that the free algebra $k\langle X\rangle$ is $\Z^s$-graded with all $x\in X$ has total degree $1$. Let $G$ be a set of homogeneous polynomials and $A= k\langle X\rangle/(G)$. Let $\tau$ be a $\Z^s$-graded algebra automorphism of $k\langle X\rangle$ such that $\tau(G)\subseteq (G)$. Denote $\bar{\tau}$ the induced automorphism of $A$ by $\tau$, then the following hold:
\begin{enumerate}
\item $A^{\bar{\tau}}\cong k\langle X\rangle/(\phi_{\tau^{-1}}(G))$, where $\phi_{\tau^{-1}}: k\langle X\rangle \to k\langle X\rangle^{\tau^{-1}}$ is the homomorphism of algebras given by $\phi_{\tau^{-1}}(x_i)=x_i$ for all $x_i\in X$.
\item If further $\lw(\tau(x_i))=x_i$ for all $x_i\in X$ and $G$ is a Gr\"{o}bner basis of $(G)$, then $\phi_{\tau^{-1}}(G)$ is a Gr\"{o}bner basis of $(\phi_{\tau^{-1}}(G))$.
\item If further $\tau(x_i)$ is a scalar multiple of $x_i$ for all $x_i\in X$, then $\phi_{\tau^{-1}}(\Omega)$ is normal in $k\langle X\rangle/(\phi_{\tau^{-1}}(G))$ for any homogeneous polynomial $\Omega\in k\langle X\rangle$ that is normal in $A$.
\end{enumerate}
\end{lemma}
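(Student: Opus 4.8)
The plan is to deduce all three parts from one structural identity for the twist. Throughout write $\pi\colon k\langle X\rangle\to A$ for the canonical projection, and note first that $\phi_{\tau^{-1}}$ is in fact an isomorphism $k\langle X\rangle\xrightarrow{\sim}k\langle X\rangle^{\tau^{-1}}$ (the twisted algebra is again free on $X$ in total degree one), so it is a graded linear bijection of the underlying space.

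\textbf{Part (1).} Let $\Phi\colon k\langle X\rangle\to A^{\bar\tau}$ be the homomorphism from the untwisted free algebra to the twisted algebra with $\Phi(x_i)=\bar x_i$; it is surjective since $A^{\bar\tau}$ is again generated in total degree one. First I would observe that $\Phi$ intertwines the automorphisms, $\Phi\circ\tau=\bar\tau\circ\Phi$, as both sides are homomorphisms $(k\langle X\rangle,\cdot)\to(A^{\bar\tau},*)$ agreeing on the generators (here $\bar\tau$ is a $*$-automorphism of $A^{\bar\tau}$). Unwinding the definition $a*b=a\,\bar\tau^{|a|}(b)$ together with this relation shows that $\Phi\circ\phi_{\tau^{-1}}$ is an \emph{ordinary} homomorphism $k\langle X\rangle\to A$ sending $x_i\mapsto\bar x_i$, hence $\Phi\circ\phi_{\tau^{-1}}=\pi$. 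Since $\phi_{\tau^{-1}}$ is bijective this gives $\ker\Phi=\phi_{\tau^{-1}}((G))$, so $A^{\bar\tau}\cong k\langle X\rangle/\phi_{\tau^{-1}}((G))$.

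The remaining, and main, difficulty is to identify $\phi_{\tau^{-1}}((G))$ with the ordinary ideal $(\phi_{\tau^{-1}}(G))$. Expanding $\phi_{\tau^{-1}}(pgq)=\phi_{\tau^{-1}}(p)\,\tau^{-|p|}(\phi_{\tau^{-1}}(g))\,\tau^{-|pg|}(\phi_{\tau^{-1}}(q))$ presents $\phi_{\tau^{-1}}((G))$ as the ordinary two-sided ideal generated by all $\tau^{-m}(\phi_{\tau^{-1}}(g))$ with $m\ge0$, $g\in G$; the inclusion $(\phi_{\tau^{-1}}(G))\subseteq\phi_{\tau^{-1}}((G))$ is then clear. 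For the reverse inclusion I would run an induction on the total degree $d=|g|$ to show $\tau^{-m}(\phi_{\tau^{-1}}(g))\in(\phi_{\tau^{-1}}(G))$: since $\tau(G)\subseteq(G)$ forces $(G)$ to be $\tau$-stable, one has $\tau^{-m}(g)=\sum_k p_kg_kq_k$ with $g_k\in G$; applying $\phi_{\tau^{-1}}$ and expanding as above, the summands with $p_k=1$ lie in $(\phi_{\tau^{-1}}(G))$ because $\phi_{\tau^{-1}}(g_k)$ does, while those with $|p_k|\ge1$ have $|g_k|<d$ and are covered by the inductive hypothesis.

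\textbf{Part (2).} The hypothesis $\lw(\tau(x_i))=x_i$ says $\tau$ is upper triangular with nonzero diagonal on the span of the letters, hence so is every $\tau^{-j}$, giving $\lw(\tau^{-j}(x_i))=x_i$. By multiplicativity of $\lw$ I would get $\lw(\phi_{\tau^{-1}}(u))=u$ for each word $u$, so $\lw(\phi_{\tau^{-1}}(f))=\lw(f)$ for all nonzero $f$; in particular $\lw(\phi_{\tau^{-1}}(G))=\lw(G)$ and $\nw(\phi_{\tau^{-1}}(G))=\nw(G)$. Combining with Part (1), twisting preserves Hilbert series, so $\dim(k\langle X\rangle/(\phi_{\tau^{-1}}(G)))_\beta=\dim A_\beta$, while $G$ being a Gr\"obner basis gives $\dim A_\beta=\#\nw(G)_\beta=\#\nw(\phi_{\tau^{-1}}(G))_\beta$. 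Thus the dimension criterion~(4) of Lemma~\ref{truncated diamond lemma} holds for $\phi_{\tau^{-1}}(G)$ in every degree, so $\phi_{\tau^{-1}}(G)$ is a Gr\"obner basis of $(\phi_{\tau^{-1}}(G))$.

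\textbf{Part (3).} Under the isomorphism of Part (1), the class of $\phi_{\tau^{-1}}(\Omega)$ maps to $\Phi(\phi_{\tau^{-1}}(\Omega))=\pi(\Omega)=\bar\Omega$, and normal elements are preserved by algebra isomorphisms; so it suffices to prove $\bar\Omega$ is normal in $A^{\bar\tau}$, that is $\bar\Omega*A=A*\bar\Omega$. Because $\tau$ scales each generator and the $\Z^s$-degree of a word determines the product of the scaling factors of its letters (distinct generators carry distinct degrees here), $\bar\tau$ acts by a scalar $c\in k^\times$ on the homogeneous component of $\bar\Omega$, whence $\bar\tau^{j}(\bar\Omega)=c^{j}\bar\Omega$. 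Writing $e=|\Omega|$ and using $a*b=a\,\bar\tau^{|a|}(b)$, I would compute degreewise
\[
\bar\Omega*A=\bar\Omega\,\bar\tau^{e}(A)=\bar\Omega A=A\bar\Omega,\qquad
A*\bar\Omega=\textstyle\sum_j A_j\,\bar\tau^{j}(\bar\Omega)=\sum_j A_j\,\bar\Omega=A\bar\Omega,
\]
the equality $\bar\Omega A=A\bar\Omega$ being the normality of $\bar\Omega$ in $A$ (refined degree by degree since $\bar\Omega$ is homogeneous). Hence $\bar\Omega*A=A*\bar\Omega$, which is precisely the required normality $\phi_{\tau^{-1}}(\Omega)\,R=R\,\phi_{\tau^{-1}}(\Omega)$ in $R=k\langle X\rangle/(\phi_{\tau^{-1}}(G))$. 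The crux of the whole lemma is the ideal identification in Part~(1); granting it, Parts (2) and (3) reduce to the leading-word computation and the eigenvector property respectively.
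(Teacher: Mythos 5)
Your proof is correct and follows essentially the same route as the paper's: the same commutation identities among $\phi_{\tau}$, $\phi_{\tau^{-1}}$ and $\tau$, the same induction on total degrees to identify the twisted and untwisted ideals generated by $G$ (the paper phrases this as $\tau^{-1}(g)=\sum_i f_i*_{\tau}g_i*_{\tau}h_i$ with $g_i\in G$, which is the mirror image of your claim that $\tau^{-m}(\phi_{\tau^{-1}}(g))\in(\phi_{\tau^{-1}}(G))$), the same triangularity argument giving that $\phi_{\tau^{\pm1}}$ preserves leading words, and the same eigenvector computation $\tau(\Omega)=q\Omega$ for part (3) --- where you are in fact slightly more careful than the paper in flagging that this step uses the generators having distinct $\Z^s$-degrees. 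The only cosmetic divergence is in part (2): you verify the Gr\"{o}bner property through the dimension criterion (4) of Lemma \ref{truncated diamond lemma}, whereas the paper checks the definition directly via $\lw(f)=\lw(\phi_{\tau}(f))\not\in\nw(G)=\nw(\phi_{\tau^{-1}}(G))$ for nonzero $f\in(\phi_{\tau^{-1}}(G))$; both rest on the identical leading-word fact.
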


\begin{proof}
To see (1), we firstly note that $\phi_{\tau^{-1}}\circ \tau=\tau\circ \phi_{\tau^{-1}}$ because they are both algebra homomorphisms from $k\langle X\rangle$ to $k\langle X\rangle^{\tau^{-1}}$, and are equal on every letter $x_i\in X$.
So $\phi_{\tau^{-1}}:k\langle X \rangle^{\tau} \to (k\langle X \rangle^{\tau^{-1}})^{\tau}=k\langle X\rangle$ is also an algebra homomorphism. Let $\phi_{\tau}:k\langle X\rangle \to k\langle X\rangle^\tau$ be the algebra homomorphism given by $\phi_\tau(x_i)=x_i$ for all $x_i\in X$. Then we have
$
\phi_\tau\circ \phi_{\tau^{-1}}=\phi_{\tau^{-1}}\circ \phi_\tau=\text{id}_{k\langle X \rangle}
$
since all maps are algebraic endomorphisms of $k\langle X \rangle$ and are equal on each $x_i\in X$. A simple induction on total degrees of polynomials in $G$ will gives that there is an expression of the form, for every $g\in G$,
$$
\tau^{-1}(g) = \textstyle\sum_{i=1}^n f_i*_{\tau} g_i *_\tau h_i,
$$
where $g_i\in G$. Hence the ideal generated by $G$ in $k\langle X\rangle$ is equal to the ideal generated by $G$ in $k\langle X\rangle^\tau$ as graded vector spaces. It follows immediately that
$
k\langle X\rangle/(\phi_{\tau^{-1}}(G)) \xrightarrow{\cong} k\langle X\rangle^\tau/(G) = (k\langle X\rangle/(G))^{\bar{\tau}} = A^{\bar{\tau}},
$
where the first isomorphism is induced by $\phi_\tau$.

To see $(2)$, we assume that $f$ is a nonzero polynomial in $(\phi_{\tau^{-1}}(G))$. Then $\phi_{\tau}(f) \in (G)$ and hence
$
\lw(f)=\lw(\phi_\tau(f)) \not\in \nw(G)= \nw(\phi_{\tau^{-1}}(G)).
$
Now by definition $\phi_{\tau^{-1}}(G)$ is a Gr\"{o}bner basis of $(\phi_{\tau^{-1}}(G))$.

To see $(3)$, it suffices to show that $\Omega$ is normal in $A^{\bar{\tau}}$. By assumption one has $\tau(\Omega)=q\Omega$ for some nonzero $q\in k$. Suppose that $\Omega$ has total degree $m$, then any homogeneous element $a\in A$ of total degree $n$ gives rise to equations
$
a *_{\tau} \Omega = a \tau^n(\Omega) = q^n \Omega a' =\Omega *_\tau q^m \tau^{-m}(a')$ and $\Omega *_\tau a = \Omega \tau^m (a) = a'' \Omega = q^{-n}a''*_\tau \Omega
$
for some $a',a''\in A$. So $\Omega$ is also normal in $A^{\bar{\tau}}$.
\end{proof}

\section{Hilbert Driven Gr\"{o}bner basis computation}

In this paper, a key step is to determine the obstructions of algebras with respect to an appropriate admissible order. It is achieved by Hilbert driven Gr\"{o}bner basis computation. This section we deal with some notations and lemmas that are essential for our classification. We focus on the $\Z^2$-graded algebras, so homogeneous means $\Z^2$-homogeneous unless otherwise stated. Throughout $A$ is a properly $\Z^2$-graded algebra with two generators.
%%We start to focus on  $5$-dimensional AS-regular algebras with two generators in the setting of $\Z^2$-grading, We always assume that the algebras are domains of Gelfand-Kirillov dimension at least 4.

Let $k\langle x_1, x_2 \rangle$ stand for the $\Z^2$-graded free algebra with grading given by $\deg(x_1)=(1,0)$ and $\deg(x_2)=(0,1)$. For any nonzero homogeneous polynomial $f\in k\langle x_1,x_2\rangle_{(m,n)}$, we write $\deg f=(m, n)$, $\deg_1(f)=m$, $\deg_2(f)=n$, and call $|\deg(f)|=m+n$ the total degree of $f$.
We use the {\it deg-lex order $\/ >_{deglex}$ on $\Z^2$}. For arbitrary $\alpha=(\alpha_1, \alpha_2),\ \beta=(\beta_1, \beta_2)\in \Z^2$, by $\alpha >_{deglex} \beta$ we mean either $|\alpha|>|\beta|$ or $|\alpha|=|\beta|$ with $\alpha_2>\beta_2$. The admissible order on $\{x_1, x_2\}^*$ is chosen to be the {\it deg-lex order} given by: for $u, v\in \{x_1, x_2\}^*$,
\begin{eqnarray*}
\label{definition-deglex}
u >_{deglex} v\quad \text{ iff }\quad\left\{
\begin{array}{llll}
\deg(u)>_{deglex}\deg(v),\quad \text{or}&&\\
\deg(u)=\deg(v)\; \; \text{and}\;\; u>_{lex}v.
\end{array}\right.
\end{eqnarray*}

We choose a presentation $A=k\langle x_1,x_2\rangle/\mathfrak{a}$, where $\mathfrak{a}$ is a homogeneous ideal of $k\langle x_1, x_2 \rangle$.
Denote $T: k\langle x_1, x_2\rangle \to k\langle x_1, x_2\rangle$ the homomorphism of algebras given by $T(x_1)=x_2,\, T(x_2)=x_1$. Then the switching algebra $A^{\omega}$ has a presentation $A^{\omega}=k\langle x_1, x_2 \rangle/T(\mathfrak{a})$. Let $G$ be the unique reduced Gr\"{o}bner basis of $\mathfrak{a}$ and $R=k\langle x_1,x_2\rangle/(\lw(G))$. Then $G$ consists of homogeneous polynomials. Since the leading word of elements in $G$ are mutually distinct, there are at most a finite number of relations in $G$ of any degree. Thus elements in $G$ can be enumerated as
$$
f_1,\ f_2,\ f_3,\cdots \quad \text{by} \quad \lw(f_1) <_{deglex} \lw(f_2) <_{deglex} \lw(f_3) <_{deglex} \cdots.
$$
Furthermore, we may always assume that
$$
\deg_1(f_1)\geq\deg_2(f_1),
$$
otherwise, we can pass to the switching algebra $A^{\omega}$.

For each $i\geq0$, denote
$$
G^{i}=\{f_1, f_2, \cdots,f_i\},\quad A^i=k\langle x_1,x_2\rangle/(G^i),
\quad R^i=k\langle x_1,x_2\rangle/(\lw(G^i))
$$
where we have set $G^0=\emptyset$, the empty set, so $R^0=A^0=k\langle x_1,x_2\rangle$. Also denote
$$G_{min}=\{f_i\in G: f_i\not\in (G^{i-1})\}.$$

The following two lemmas are considered as bases for our deduction in determining $\lw(G)$. For two power series, by $f(t_1, t_2)\ge g(t_1, t_2)$ we mean all coefficients of $f(t_1, t_2)- g(t_1, t_2)$ are nonnegative.
\begin{lemma}
\label{lemma-hilbert-compare}
The following statements hold:
\begin{enumerate}
\item $H_R(t_1,t_2)=H_A(t_1,t_2)$ and $H_{R^i}(t_1,t_2)\geq H_{A^i}(t_1,t_2)$.
\item $H_{R^0}(t_1,t_2)\geq H_{R^1}(t_1,t_2)\geq\cdots\geq H_R(t_1,t_2)$.
\item $G=G^i$ iff $H_{R^i}(t_1,t_2)=H_R(t_1,t_2)$.
\item $G_{min}$ is a minimal generating set of the ideal $\mathfrak{a}$.
\end{enumerate}
\end{lemma}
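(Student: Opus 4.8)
The plan is to handle the four assertions in turn. The first three rest on the standard dictionary between a Gr\"obner basis and its leading-word monomial algebra, while (4) needs a genuinely new ingredient. For (1) the point is that $\nw(G)$ is at once a $k$-basis of $A$ and of $R$: of $A$ because $G$ is a Gr\"obner basis of $\mathfrak a$ (condition (3) of Lemma~\ref{truncated diamond lemma}, valid for all $\alpha$), and of $R$ because the words with no factor in $\lw(G)$ are precisely a basis of the monomial quotient $k\langle x_1,x_2\rangle/(\lw(G))$. As both bases are indexed by $\nw(G)$ and the $\Z^2$-degree is read off the words, $H_A=H_R$. For $H_{R^i}\ge H_{A^i}$ I would note that the images of $\nw(G^i)$ always span $A^i$ (every polynomial reduces modulo $G^i$ to a combination of normal words), whereas $\nw(G^i)$ is exactly a basis of $R^i$, and then compare dimensions degree by degree.

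Part (2) is pure monotonicity: from $\lw(G^0)\subseteq\lw(G^1)\subseteq\cdots\subseteq\lw(G)$ the monomial ideals grow, giving surjections $R^0\twoheadrightarrow R^1\twoheadrightarrow\cdots\twoheadrightarrow R$, and a surjection of connected graded algebras yields coefficientwise inequalities of Hilbert series. Part (3) uses the same surjections. If $G=G^i$ then $R^i=R$ and equality is clear. Conversely, if $G\neq G^i$ there is a next element $f_{i+1}$; since $G$ is reduced, $\lw(f_{i+1})$ has no factor in $\lw(G^i)$, so $\lw(f_{i+1})\in\nw(G^i)\setminus\nw(G)$, which makes the coefficient of $H_{R^i}$ strictly exceed that of $H_R$ in degree $\deg(\lw(f_{i+1}))$; hence $H_{R^i}\neq H_R$.

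The heart of the lemma is (4). That $G_{min}$ generates $\mathfrak a$ is a short induction showing $(G^i)\subseteq(G_{min})$ for every $i$ --- distinguishing $f_i\in G_{min}$ from $f_i\in(G^{i-1})$ --- so that $\mathfrak a=(G)\subseteq(G_{min})$. For minimality I plan to prove the key claim: for every $f_j\in G_{min}$, the word $\lw(f_j)$ is not the leading word of any nonzero element of $N:=\mathfrak m\mathfrak a+\mathfrak a\mathfrak m$, where $\mathfrak m$ is the augmentation ideal. The decisive input is a total-degree count: any homogeneous $\xi\in N$ of total degree $D$ is a sum of terms $p\,g\,q$ with $g\in G$ and $|p|+|q|\ge1$, forcing $|\deg g|<D$ for each such $g$; when $D=|\deg f_j|$ the deg-lex enumeration then places every such $g$ in $G^{j-1}$, so $\xi\in(G^{j-1})$. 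Taking $\xi$ monic of degree $\deg f_j$ with $\lw(\xi)=\lw(f_j)$ and reducing $f_j-\xi\in\mathfrak a$ modulo $G$ uses only elements of $G^{j-1}$ (leading words strictly drop below $\lw(f_j)$), whence $f_j-\xi\in(G^{j-1})$ and therefore $f_j\in(G^{j-1})$, contradicting $f_j\in G_{min}$.

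Granting the claim, minimality follows quickly: if $f_j\in(G_{min}\setminus\{f_j\})$, separating the scalar-coefficient terms from the decomposable ones writes $f_j-\sum_k c_k g_k\in N$ with the $g_k\in G_{min}\setminus\{f_j\}$ of degree $\deg f_j$. Reducedness of $G$ prevents any cancellation among the distinct leading words of $\{f_j\}\cup\{g_k\}$, so if this element is nonzero its leading word is that of some member of $G_{min}$, contradicting the key claim; and the possibility that it is $0$ is ruled out by comparing the coefficient of $\lw(f_j)$ on the two sides. I expect this total-degree argument to be the decisive and least routine step, since it is what turns the a priori global condition $\lw(f_j)\in\lw(N)$ into the local statement $f_j\in(G^{j-1})$, and it is exactly the place where homogeneity of the $\Z^2$-grading and the compatibility of deg-lex with total degree are essential.
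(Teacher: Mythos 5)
Your proof is correct. Parts (1)--(3), which the paper dismisses as trivial, are handled by exactly the standard arguments, and your proof that $G_{min}$ generates $\mathfrak{a}$ is the same induction on the smallest bad index that the paper uses. Where you genuinely diverge is the minimality half of (4). The paper's argument is purely ideal-theoretic and never invokes the Gr\"obner or reduced structure of $G$ at that point: it fixes a degree $\alpha$ in which a redundancy occurs and chooses the \emph{largest} index $n$ with $\deg(f_n)=\alpha$ and $f_n\in(G_{min}\backslash\{f_n\})$; homogeneity forces any redundancy expression for $f_n$ to use members of $G_{min}$ of smaller total degree (hence lying in $G^{n-1}$) together with scalar multiples of same-degree members, and if one of the latter had index $>n$ one could solve the relation for it and contradict the maximality of $n$; thus $f_n\in(G^{n-1})$, contradicting $f_n\in G_{min}$. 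You instead pass through $N=\mathfrak{m}\mathfrak{a}+\mathfrak{a}\mathfrak{m}$ and prove a leading-word exclusion claim whose proof needs both the Gr\"obner property of $G$ (triviality modulo $G$, as in Lemma \ref{truncated diamond lemma}) and its reducedness (to rule out cancellation among leading words). The two arguments pivot on the same phenomenon --- within a fixed degree, the ``largest'' redundant generator lands in the ideal of the earlier ones, and your largest-leading-word element is precisely the paper's largest-index element, since within one degree the enumeration is by leading word --- but they differ in what they use and what they yield: the paper's maximality trick is the most economical path to the lemma, whereas your route is heavier machinery that makes the graded-Nakayama picture explicit and establishes the stronger intermediate fact that leading words of elements of $G_{min}$ never occur as leading words of nonzero elements of $\mathfrak{m}\mathfrak{a}+\mathfrak{a}\mathfrak{m}$, a statement of independent interest linking obstructions to minimal generators.
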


\begin{proof}
(1), (2) and (3) are trivial. To prove (4), we need to show that $G_{min}$ generates the ideal $(G)$ and every proper subset of $G_{min}$ does not generate $(G)$.

Suppose that $G\not\subseteq (G_{min})$, and let $i$ be the smallest integer with  $f_i\notin (G_{min})$.
Now $G^{i-1}\subseteq (G_{min})$ implies that $f_i\notin (G^{i-1})$, this causes a contradiction of $f_i\in G_{min}$ by the definition. So $G_{min}$ generates the ideal $(G)$.
If $G_{min}$ is not minimal, then there exists some $\alpha\in\N^2$ and a largest integer $n$ such that
$\deg(f_n)=\alpha$ and $f_n\in (G_{min}\backslash\{f_n\})$. Then one has $f_n\in (G^{n-1})$ and hence
$f_n\not\in G_{min}$, which is impossible.
So $G_{min}$ is a minimal generating set of the ideal $(G)$.
\end{proof}

\begin{lemma}
\label{lemma-new-relation}
Let $\beta =(\beta_1,\beta_2)\in \N^2$ be a fixed index. If $H_{R^i}(t_1,t_2) - H_{R}(t_1,t_2) = p t_1^{\beta_1}t_2^{\beta_2} + \sum_{\alpha\not\leq \beta} r_\alpha t_1^{\alpha_1}t_2^{\alpha_2}$ with $p>0$. Then $G\backslash G^i$ contains exactly $p$ distinct relations of degree $\beta$. Similarly the single variable version for $\Z$-grade algebra is also true.
\end{lemma}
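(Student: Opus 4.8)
The plan is to read the coefficients of $H_{R^i}-H_R$ off the monomial combinatorics of the quotients $R^i$ and $R$, and then to translate the vanishing of this difference in the degrees below $\beta$ into a statement about the leading words of $G\setminus G^i$. The starting observation is that $G$, being the reduced Gr\"obner basis of $\mathfrak a$, is tip-reduced, so $\lw(G)$ is an \emph{antichain}: no leading word has a proper factor lying in $\lw(G)$. Consequently both $R^i=k\langle x_1,x_2\rangle/(\lw(G^i))$ and $R=k\langle x_1,x_2\rangle/(\lw(G))$ are monomial quotients, and $\dim(R^i)_\alpha$ (resp. $\dim R_\alpha$) equals the number of words of degree $\alpha$ having no factor in $\lw(G^i)$ (resp. in $\lw(G)$). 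Writing $D=\lw(G)\setminus\lw(G^i)=\{\lw(f_j):j>i\}$, and using $\lw(G^i)\subseteq\lw(G)$ together with $H_{R^i}\geq H_R$ from Lemma~\ref{lemma-hilbert-compare}(2), the coefficient of $t_1^{\alpha_1}t_2^{\alpha_2}$ in $H_{R^i}-H_R$ counts exactly the words of degree $\alpha$ that have no factor in $\lw(G^i)$ yet have a factor in $D$.

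The key step is a dichotomy forced by the hypothesis. Each $u\in D$ is itself a word having no factor in $\lw(G^i)$ (by the antichain property) and having itself as a factor, so it contributes at least $1$ to the coefficient of $H_{R^i}-H_R$ in degree $\deg(u)$. Since the hypothesis asserts that this difference has vanishing coefficient in every degree $\alpha$ with $\alpha\leq\beta$ and $\alpha\neq\beta$, I would conclude that $D$ contains \emph{no} word of such a degree. I would then analyse a word $w$ of degree $\beta$ that contributes to the coefficient of $t_1^{\beta_1}t_2^{\beta_2}$: it has a factor $u\in D$, and any \emph{proper} factor of $w$ has degree $\alpha$ with $\alpha\leq\beta$, $\alpha\neq\beta$, a range just excluded for members of $D$; hence $u=w$, so $w\in D$ with $\deg(w)=\beta$. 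Conversely every element of $D$ of degree $\beta$ contributes. Therefore $p$ equals the number of words in $D$ of degree $\beta$.

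Finally I would pass back from leading words to relations. The map $f_j\mapsto\lw(f_j)$ is injective with $\deg(\lw(f_j))=\deg(f_j)$, and $D=\lw(G\setminus G^i)$; hence the number of $u\in D$ with $\deg(u)=\beta$ equals the number of $f_j\in G\setminus G^i$ with $\deg(f_j)=\beta$, giving exactly $p$ distinct relations of degree $\beta$. The single-variable assertion for $\Z$-graded algebras follows by the same argument, replacing $\N^2$ and the componentwise order by $\N$ and its usual order.

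I expect the main obstacle to be the middle step: correctly exploiting that the hypothesis controls the difference in \emph{all} degrees $\alpha\leq\beta$ (not merely at $\beta$) in order to exclude proper factors, and invoking the antichain property of the reduced Gr\"obner basis at exactly the point where each new leading word must register as a genuinely new normal word being annihilated in passing from $R^i$ to $R$. Everything else is bookkeeping with monomial bases.
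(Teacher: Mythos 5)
Your proof is correct and takes essentially the same approach as the paper's: both interpret the coefficients of $H_{R^i}-H_R$ as counts of words that are normal modulo $G^i$ but not modulo $G$, use the tip-reducedness of the reduced Gr\"obner basis to see that any element of $G\setminus G^i$ of degree strictly below $\beta$ would force a positive coefficient in that degree, and conclude that the contributing words of degree $\beta$ must coincide with the leading words of the new relations. Your write-up merely makes explicit, via the antichain property, the step the paper leaves implicit when it asserts that the difference of Hilbert series "also tells that $G\backslash G^i$ contains no relations of degree $<\beta$."
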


\begin{proof}
Note that $\dim R_\beta$ equals the number of normal words of degree $\beta$ modulo $G$, and $\dim R_\beta^i$ equals the number of normal words of degree $\beta$ modulo $G^i$. So there are exactly $p$ distinct words, say $\{w_1, w_2,\cdots,w_p\}$, of degree $\beta$ that are normal modulo $G^i$ but not normal modulo $G$. So $G$ contains $f_{i_1}, f_{i_2}, \cdots,f_{i_p}$ with all $i_s>i$ such that $\lw(f_{i_s})$ is a factor of $w_s$ for $s=1, 2, \cdots,p$. However, the difference $H_{R^i}(t_1,t_2) - H_{R}(t_1,t_2)$ also tells that $G\backslash G^i$ contains no relations of degree $<\beta$. Thus $\lw(f_{i_s})=w_s$ and hence $\deg(f_{i_s}) = \beta$ for $s=1, 2, \cdots,p$. This completes the proof.
\end{proof}

\begin{remark}
The basic idea of ``Hilbert driven Gr\"{o}bner basis computation'' is to guide the computation of $\lw(G)$ by a priori knowledge of Hilbert series $H_A(t_1,t_2)$. Roughly speaking, the algorithm in our application is performed as follows:
begin with $\lw(G^0)=\emptyset$; drive $\lw(G^i)$ step-by-step
by examining the Hilbert series $H_{R^i}(t_1,t_2) - H_{R}(t_1,t_2)$ according to Lemma \ref{truncated diamond lemma}, Lemma \ref{lemma-hilbert-compare}, Lemma \ref{lemma-new-relation} and various priori given properties on $A$; terminate the procedure till $H_{R^i}(t_1,t_2) - H_{R}(t_1,t_2)$ vanished.
\end{remark}

Now we focus on the main subject. In the remaining of this paper, we always assume that $A$ is an AS-regular algebra of global dimension 5 which is a domain of GK-dimension $\geq 4$ unless otherwise stated.
So no element of $G$ is of degree $(m,0)$ or $(0,n)$ due to the assumption.

The next lemma is elementary in our analyzing the ($\Z^2$-grading) resolution types of $k_A$, which is used to calculate the Hilbert series $H_A(t_1,t_2)$.

\begin{lemma}
\label{minimal resolution}
Suppose that the minimal free resolution of $k_A$ is of the form
\begin{eqnarray*}
0 \to
\begin{array}[t]{c}
A\\[-0.5em] \text{\tiny\rm $(p, q)$}
\end{array}
\xrightarrow{d_5}
\begin{array}[t]{c}
A^2\\[-0.5em] \text{\tiny \rm $(p, q\!-\!\!1)$}\\[-0.7em] \text{\tiny \rm $ (p\!-\!\!1, q)$}
\end{array}
\xrightarrow{d_4}
\begin{array}[t]{c}
A^m\\[-0.5em] \text{\tiny \rm $(p\!-\!u_1, q\!-\!\!v_1)$}\\[-0.7em]\text{\tiny \rm $(p\!-\!u_2, q\!-\!\!v_2)$}\\[-0.7em]
\text{\tiny \rm $\vdots$}\\[-0.7em]\text{\tiny \rm $(p\!-\!u_m, q\!-\!\!v_m)$ }
\end{array}
\xrightarrow{d_3}
\begin{array}[t]{c}
A^m\\[-0.5em] \text{\tiny \rm $(u_1, v_1)$}\\[-0.7em] \text{\tiny \rm $(u_2, v_2)$}
\\[-0.7em] \text{\tiny \rm $\vdots$ }
\\[-0.7em] \text{\tiny \rm $(u_m, v_m)$}
\end{array}
\xrightarrow{d_2}
\begin{array}[t]{c}
A^2\\[-0.5em] \text{\tiny \rm (1, 0)}\\[-0.7em] \text{\tiny \rm (0, 1)}
\end{array}
\xrightarrow{d_1}
\begin{array}[t]{c}
A\\[-0.5em] \text{\tiny \rm (0, 0)}
\end{array}
\to k_A
\to 0,
\end{eqnarray*}
where $m\geq1$ and $p, q, u_i, v_i\in\Z$. Let $\sigma$ and $\tau$ be two permutations on $\{1, 2, \cdots, m\}$ such that $u_{\sigma (1)}\leq u_{\sigma(2)}\leq\cdots\leq u_{\sigma(m)}$ and $v_{\tau(1)}\leq v_{\tau(2)}\leq \cdots\leq v_{\tau(m)}$, respectively. Then
\begin{enumerate}
\item $u_{\sigma(1)}\ge 1, \; u_{\sigma(1)}+u_{\sigma(m)}<p$, and $u_{\sigma(2)}+u_{\sigma(m)}\leq p$.
\item $v_{\tau(1)}\ \ge 1,\; v_{\tau(1)}\ +v_{\tau(m)}\ <q$, and $v_{\tau(2)}+v_{\tau(m)}\ \leq q$.
\end{enumerate}
\end{lemma}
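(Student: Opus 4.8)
\smallskip
\noindent\textbf{Proof proposal.} First I would reduce to statement (1): part (2) is (1) applied to the switching algebra $A^{\omega}$, under which $(u_i,v_i,p)$ and $(v_i,u_i,q)$ are interchanged. Throughout I relabel so that $u_1\le u_2\le\cdots\le u_m$, i.e. I take $\sigma=\mathrm{id}$. The bound $u_{\sigma(1)}\ge1$ is the easy part: the generators of $F_2$ are exactly the minimal relations of $A$, that is the set $G_{min}$, and by the observation preceding the lemma no element of $G$ has degree $(m,0)$ or $(0,n)$ (this is where the domain hypothesis enters). Hence every relation degree $(u_i,v_i)$ satisfies $u_i\ge1$ and $v_i\ge1$, so $u_{\sigma(1)}=u_1\ge1$.

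\smallskip
The remaining inequalities I would extract from the interplay of minimality, Gorenstein self-duality and the domain property. Write $d_2(\mathrm{gen}^{F_2}_i)=a_ie_1+b_ie_2$ for the minimal relations, so that $a_ix_1+b_ix_2=0$ in $A$ with $a_i\in A_{(u_i-1,v_i)}$ and $b_i\in A_{(u_i,v_i-1)}$. Three facts drive the argument. (i) Minimality forces every entry of every differential into $A_{>0}$; in particular, for the matrix $M=(M_{ji})$ of $d_3$ (the coefficient of $\mathrm{gen}^{F_2}_j$ in $d_3(\mathrm{gen}^{F_3}_i)$) one has $M_{ji}\in A_{(p-u_i-u_j,\,q-v_i-v_j)}$, so $M_{ji}\ne0$ forces $u_i+u_j\le p$. (ii) The Gorenstein condition makes the resolution self-dual under $\underline{\mathrm{Hom}}_A(-,A)(p,q)$; the natural pairing matches $\mathrm{gen}^{F_2}_i$ with $\mathrm{gen}^{F_3}_i$, so $M$ has symmetric support ($M_{ji}\ne0\Leftrightarrow M_{ij}\ne0$; for global dimension $5$ one in fact obtains $M$ skew-symmetric of corank one), and $d_4$ is identified with the transpose of $d_2$. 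Thus $d_4$ sends the generator of $F_4$ in degree $(p-1,q)$ to $\sum_i\pm a_i\,\mathrm{gen}^{F_3}_i$, and $d_3d_4=0$ gives $M\cdot(a_i)=0$. (iii) Since $A$ is a domain, every $a_i\ne0$: if $a_i=0$ then the $i$-th relation reads $b_ix_2=0$, whence $b_i=0$ and $d_2(\mathrm{gen}^{F_2}_i)=0$, contradicting minimality.

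\smallskip
Minimality forbids a zero column of $M$, so $\mathrm{gen}^{F_3}_m$ admits some $M_{jm}\ne0$; then $u_j+u_m\le p$ and $u_1\le u_j$ already give the weak bound $u_1+u_m\le p$. To upgrade this to $u_{\sigma(2)}+u_{\sigma(m)}\le p$ I would argue by contradiction, assuming $u_2+u_m>p$. If $u_1=u_2$ then $u_2+u_m=u_1+u_m\le p$, a contradiction, so $u_1<u_2$ and $\sigma(1)$ is the unique minimizer. Now $M_{jm}\ne0\Rightarrow u_j\le p-u_m<u_2$, hence $u_j=u_1$ and $j=1$; non-vanishing of the column then gives $M_{1m}\ne0$ with all other entries zero. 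By symmetric support the row indexed by $\mathrm{gen}^{F_2}_m$ is likewise supported only at $j=1$, so $M\cdot(a_i)=0$ reads $M_{m1}a_1=0$ in that row; as $A$ is a domain and $M_{m1}\ne0$ this forces $a_1=0$, hence $b_1=0$ and $d_2(\mathrm{gen}^{F_2}_1)=0$, the same contradiction. This proves (1c).

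\smallskip
The hard part is the strict inequality $u_{\sigma(1)}+u_{\sigma(m)}<p$, for which I must exclude the boundary case $u_1+u_m=p$. Together with (1c) this equality forces the tie $u_1=u_2$, so the value $u_1$ is attained on a set $S$ with $|S|\ge2$, and each $F_3$-generator of maximal $u$-degree yields a syzygy supported only on $\{\mathrm{gen}^{F_2}_j:j\in S\}$ with coefficients lying in $A_{(0,\ast)}$, i.e. pure powers of $x_2$. The plan is to show that such a syzygy forces two of the minimal relations $\{R_j:j\in S\}$ to agree up to a power of $x_2$, contradicting that they are distinct minimal generators (equivalently, that $\lw(G)$ is an antichain). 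I expect this to be the main obstacle: the local matrix argument that settles (1c) no longer suffices, since an overlap of two obstructions sharing an $x_1$-heavy core padded by $x_2$ on opposite sides is combinatorially admissible. Ruling it out seems to require the global input special to these algebras — the self-dual corank-one structure of $M$ combined with the Hilbert-series normalizations forced by the domain hypothesis (the pure strands give $H_A(t_1,0)=1/(1-t_1)$ and $H_A(0,t_2)=1/(1-t_2)$, hence $u_i<p$ and $v_i<q$ for all $i$) — to make the offending configuration collapse.
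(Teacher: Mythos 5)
Your proposal has two genuine gaps, and both occur exactly where the paper's proof does its real work. First, your step (ii) — that Gorenstein self-duality gives a ``natural pairing'' matching $\mathrm{gen}^{F_2}_i$ with $\mathrm{gen}^{F_3}_i$, so that the middle matrix $M$ has symmetric support and $d_4$ is the transpose of $d_2$ — is unjustified. Dualizing the resolution by $\underline{\rm Hom}_A(-,A)$ produces a minimal free resolution of the \emph{left} module ${}_Ak$ (shifted by the Gorenstein parameter), not a self-pairing of the original resolution of $k_A$; the degrees of the generators match up, but the isomorphisms $F_3\cong F_2^*(p,q)$ are non-canonical, and under an arbitrary choice nothing forces $M_{ji}\neq0\Leftrightarrow M_{ij}\neq0$, let alone skew-symmetry of corank one. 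Worse, ``support of $M$'' is not invariant under the row/column operations that preserve minimal free resolutions, so the property you invoke is not even well defined; the paper's own proof exploits precisely such operations. Since your proof of $u_{\sigma(2)}+u_{\sigma(m)}\leq p$ runs through this symmetric-support claim, part (1c) is not established. Second, you explicitly leave the strict inequality $u_{\sigma(1)}+u_{\sigma(m)}<p$ unproven, describing it as the main obstacle and speculating that it needs deep structural input. It does not, and this is the heart of the lemma.

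What you are missing is one elementary observation that the paper makes at the outset: since $A$ is a domain and the resolution is minimal, \emph{every column of $d_2$ and of $d_3$ has at least two nonzero entries}. (For $d_2$ this is your step (iii); for $d_3$, if column $i$ had a single nonzero entry $b_{j_0i}$, then $d_2d_3=0$ gives $a_{1j_0}b_{j_0i}=a_{2j_0}b_{j_0i}=0$, so by the domain property column $j_0$ of $d_2$ vanishes, contradicting minimality.) This immediately yields (1a) and (1c) with no duality at all: column $\sigma(m)$ of $d_3$ has nonzero entries in two distinct rows $j_1\neq j_2$, each forcing $u_{j_\ell}+u_{\sigma(m)}\leq p$, hence $u_{\sigma(2)}+u_{\sigma(m)}\leq p$. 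For the strict inequality, suppose $u_{\sigma(1)}+u_{\sigma(m)}=p$. Then every nonzero entry of column $\sigma(m)$ of $d_3$ lies in some $A_{(0,s)}=kx_2^s$, so the column reads $(\lambda_1x_2^{s_1},\cdots,\lambda_mx_2^{s_m})^T$ with at least two $\lambda_i\neq0$; after sorting so that $s_1\leq\cdots\leq s_m$ and $\lambda_1,\lambda_2\neq0$, the invertible row operation $P=I_m-\sum_{w\geq2}E_{w1}(\lambda_1^{-1}\lambda_wx_2^{s_w-s_1})$ (with $d_2\mapsto d_2P^{-1}$, $d_3\mapsto Pd_3$) produces another minimal resolution in which that column has a \emph{single} nonzero entry $\lambda_1x_2^{s_1}$ — exactly the configuration ruled out above, since then $d_2d_3=0$ and the domain property kill the corresponding column of $d_2$. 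So your suspicion that ``an overlap of two obstructions padded by $x_2$ on opposite sides is combinatorially admissible'' is resolved not by Hilbert series or duality, but by this row-reduction trick: commuting powers of $x_2$ let you collapse the offending syzygy to a one-term syzygy, which a domain cannot support.
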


\begin{proof}
We only justify (1). The proof of (2) is similar. Write the differentials $d_1, d_2,\cdots, d_5$ in matrix form. So the first three of them are
\begin{eqnarray*}
d_1=(x_1,x_2),\quad
d_2=\left(
\begin{array}{cccc}
a_{11}&a_{12}&\cdots&a_{1m}\\
a_{21}&a_{22}&\cdots&a_{2m}
\end{array}\right), \quad
d_3=\left(
\begin{array}{cccc}
b_{11}&b_{12}&\cdots&b_{1m}\\
b_{21}&b_{22}&\cdots&b_{2m}\\
\cdots&\cdots&\cdots&\cdots\\
b_{m1}&b_{m2}&\cdots&b_{mm}
\end{array}\right)
\end{eqnarray*}
where each nonzero entry in $d_2,\, d_3$ is homogeneous of total degree $\geq1$.
 Since $A$ is a domain and
the resolution is minimal, each column of $d_2,\; d_3$ has at least two
nonzero entries of total degree $\geq1$. Thus one gets $
u_{\sigma(1)}\ge 1$, and  both $u_{\sigma(1)}$ and $u_{\sigma(2)}\leq p-u_{\sigma(m)}$. We claim that $u_{\sigma(1)}\ne p-u_{\sigma(m)}$. Otherwise, the $\sigma(m)$-th column of $d_3$ is of the form
$$
(\lambda_1x_2^{s_1}, \lambda_2x_2^{s_2}, \cdots, \lambda_m x_2^{s_m})^T
$$
with $s_1, s_2, \cdots, s_m\geq1$ and $\lambda_1, \lambda_2, \cdots, \lambda_m\in k$.
By change of rows, one can assume $s_1\leq s_2\leq\cdots\leq s_m$ and
$\lambda_1, \lambda_2\neq0$.
Let
$$
P=I_m-\sum_{w=2}^mE_{w1} (\lambda_1^{-1}\lambda_2x_2^{s_w-s_1}),
$$
where $I_m$ is the $m\times m$-identity matrix and $E_{ij}$ is the $m\times m$-matrix
with entry $1$ at position $(i,j)$ and $0$ at others.
Since $P$ is invertible with
$P^{-1}=I_m+\sum_{w=2}^mE_{w1} (\lambda_1^{-1}\lambda_2x_2^{s_w-s_1})$,
after replacing $d_2$ by $d_2P^{-1}$ and $d_3$ by $Pd_3$, respectively, we still get a
minimal resolution of $k$. However, the $\sigma(m)$-th column of the new $d_3$ is of the form
$$
(\lambda_1x_2^{s_1}, 0, \cdots, 0)^T
$$
and hence $a_{11}(\lambda_1x_2^{s_1})=0$, which contradicts the assumption of that $A$ is a domain.
\end{proof}

According to \cite{FV}, there are five possible types of such AS-regular algebras sorted by the total degree distribution of the relations in $G_{min}$, named $(3, 5, 5),\ (3, 4, 7),\ (4, 4, 4),\ (4, 4, 4, 5)$ and $(4, 4, 4, 5, 5)$. They showed there are exactly such algebras of first three types. The existence of the types $(3, 5, 5)$ and $(4, 4, 4)$ can be realized by the enveloping algebras of 5-dimensional graded Lie algebras. Later, the type $(4, 4, 4)$ was classified in \cite{WW} by using $A_\infty$-algebraic method under a generic condition. Further, Fl{\o}ystad and Vatne constructed an interesting example to show the existence of the type $(3, 4, 7)$, and they asked a question for seeking the algebras of the types $(4, 4, 4, 5)$ and $(4, 4, 4, 5, 5)$.

%\begin{remark}
%\label{proof-property}
In the following sections, we will encounter several families of algebras and state some of basic facts without proof to avoid duplication. We refer the readers to the following hints:
\begin{enumerate}
\item A claim about twist equivalence follows from Lemma \ref{twisting}.
\item A claim about a homogeneous polynomial being normal in $A$ is checked by a tedious but straightforward reductions modulo $G$. Similarly for normal sequences in $A$.
\item A claim about a homogeneous polynomial,
say $z$ of total degree $n$, being regular in $A$ follows from the equation $H_{A^{\rm gr}/(z)}(t)= (1-t^{n})H_{A^{\rm gr}}(t)$.
\item A claim about global dimension of $A$ follows from
Theorem \ref{theorem-global-dimension} or Theorem \ref{theorem-Lyndon}.
\item A claim about AS-regularity, Auslander-regularity, Cohen-Macaulayness and strongly notherian of $A$ follows from \cite[Lemma 1.3]{RZ}.
\end{enumerate}
%\end{remark}

In the classification of each type, we use some Maple programs
\footnote{Maple results are omitted due to its length, the reader wishing to verify our computations can find the Maple code for these programs,
which we make freely available, on the website:

http://www.math.zju.edu.cn/teacher\_intro.asp?userid=17\&item=\%D6\%F7\%D2\%AA\%C2\%DB\%D6\%F8 } to help us find remainders of polynomials in the free algebra modulo given relations and use Maple Solving Command to solve various equation systems. We have justified that all equation systems, except for the one occurred in the classification of type (4, 4, 4), can be solved without Maple Solving Command and give the same set of solutions. We also write programs in Maple to find some normal elements or normal sequence of low degree, which are essential to prove that all algebras we obtained have nice ring-theoretic and homological properties.

\section{Classification of type $(3, 5, 5)$}

We follow the notation and convention in the previous sections. This section we begin to classify 5-dimensional AS-regular $\Z^2$-graded algebras of type $(3, 5, 5)$, these are determined by three defining relations of the (total) degrees 3, 5, 5, respectively. We obtain 5 classes of AS-regular algebras listed by $\mathcal{A}, \mathcal{B}, \mathcal{C}, \mathcal{D}$ and $\mathcal{E}$.

Thanks to \cite{FV}, the minimal free resolution of the trivial module $k$ over $A^{\rm gr}$ is of the form
\begin{eqnarray*}
0
\to A^{\rm gr}(-11)
\to A^{\rm gr}(-10)^2
\to A^{\rm gr}(-6)^2  \oplus A^{\rm gr}(-8)\hskip39mm \\
\to
A^{\rm gr}(-3)\oplus A^{\rm gr}(-5)^2
\to A^{\rm gr}(-1)^2
\to A^{\rm gr}
\to k_A
\to 0,
\end{eqnarray*}
and the Hilbert series of $R^{\rm gr}$ (the same one of $A^{\rm gr}$) is ${\big((1-t)^2(1-t^2)(1-t^3)(1-t^4)\big)}^{-1}$. Thus
$$
H_{(R^0)^{\rm gr}}(t)-H_{R^{\rm gr}}(t)=t^3+\sum_{n\geq4}r_nt^n
$$
and so $G$ has exactly one relation of total degree $3$,
which is $f_1$.  Since $\deg_1(f_1)\geq\deg_2(f_1)$, one has
\begin{eqnarray}
\deg(f_1)=(2,1) \; \text{ with }\; \lw(f_1)=x_2x_1^2, \nonumber
\end{eqnarray}
which gives rise to
$$
H_{(R^1)^{\rm gr}}(t)-H_{R^{\rm gr}}(t)=2t^5+\sum_{n\geq6}r_nt^n.
$$
Then $G$ has exactly two elements of total degree $5$, which are
$f_2, f_3$, and $G_{min}=\{f_1, f_2, f_3\}.$
By checking normal words of degree
$(4, 1), (3, 2), (2, 3), (1, 4)$ modulo $G^1$, respectively,
one gets
$$
\deg(f_2)=(2, 3)\ \text{ and }\ \deg(f_3)\in\{(2, 3),\ (1, 4)\}.
$$

\begin{lemma}
$\deg(f_3)=(1, 4)$ with $\lw(f_3)=x_2^4x_1$.
\end{lemma}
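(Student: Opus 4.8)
The plan is to determine the full bi-graded Hilbert series $H_A(t_1,t_2)$ exactly and then simply read off the bidegrees of the two remaining relations; this, rather than Lemma~\ref{minimal resolution}, is what decides between $(2,3)$ and $(1,4)$. Since $A$ is a domain of finite GK-dimension and finite global dimension, Theorem~\ref{theorem-global-dimension} gives a product expansion $H_A(t_1,t_2)=\prod_{i=1}^{5}(1-t_1^{a_i}t_2^{b_i})^{-1}$ with $(a_i,b_i)\in\N^2\setminus\{0\}$. Setting $t_1=t_2=t$ must recover the known $\Z$-graded series $\big((1-t)^2(1-t^2)(1-t^3)(1-t^4)\big)^{-1}$, so the five factors have total degrees $1,1,2,3,4$.

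I would then fix the factors one total degree at a time by comparing $1/H_A=\prod_i(1-t_1^{a_i}t_2^{b_i})$ against the Euler characteristic $1-\beta_1+\beta_2-\beta_3+\beta_4-\beta_5$ of the minimal free resolution, where $\beta_j$ denotes the bi-graded Betti polynomial in homological degree $j$. Here $\beta_0=1$, $\beta_1=t_1+t_2$, $\beta_2$ records the relations (total degrees $3,5,5$), and $\beta_3,\beta_4,\beta_5$ only contribute in total degrees $\geq 6$; in particular the total-degree ranges do not overlap below degree six. The degree-$1$ part forces the two total-degree-$1$ factors to be $(1-t_1)(1-t_2)$. Because $G_{min}$ has no relation of total degree $2$ or $4$ and a single relation of total degree $3$, namely $f_1$ of bidegree $(2,1)$, the total-degree $2,3,4$ parts of $1/H_A$ must equal $0,\,t_1^2t_2,\,0$. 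Matching these successively pins the remaining factors down to $(1-t_1t_2)$, $(1-t_1t_2^2)$, $(1-t_1t_2^3)$, so that
\[
H_A(t_1,t_2)=\frac{1}{(1-t_1)(1-t_2)(1-t_1t_2)(1-t_1t_2^2)(1-t_1t_2^3)}.
\]

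With $H_A$ in hand, the total-degree-$5$ part of $1/H_A$ computes to $t_1^2t_2^3+t_1t_2^4$. As $\beta_3$ begins in total degree $6$, this equals the relation contribution $\beta_2$ in total degree $5$, so the two degree-$5$ relations have bidegrees $(2,3)$ and $(1,4)$. Since $\deg(f_2)=(2,3)$ is already known, this forces $\deg(f_3)=(1,4)$ and excludes $\deg(f_3)=(2,3)$, which would instead have contributed $2t_1^2t_2^3$ to $1/H_A$.

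For the leading word, note that $\lw(f_1)=x_2x_1^2$ and $\lw(f_2)$ both have first bidegree component $2$, so no word of bidegree $(1,4)$ contains a factor in $\lw(G^2)$; hence all five such words are normal modulo $G^2$, $(G^2)_{(1,4)}=0$, and $\dim\mathfrak{a}_{(1,4)}=5-\dim A_{(1,4)}=1$. Thus $\mathfrak{a}_{(1,4)}$ is spanned by $f_3$, and $\lw(f_3)$ is the deglex-largest word with nonzero coefficient in it; by Lemma~\ref{lemma-new-relation} this is the unique obstruction of bidegree $(1,4)$. I would identify it as the deglex-maximal word $x_2^4x_1$, which is moreover the only Lyndon word of bidegree $(1,4)$. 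The crux of the argument is the exact bi-graded Hilbert series: Lemma~\ref{minimal resolution} by itself admits both $(2,3)$ and $(1,4)$ (the former merely forces Gorenstein parameter $(5,6)$ in place of $(4,7)$ while still satisfying all of its numerical inequalities), and it is the product structure of $H_A$, which forbids the coefficient $2$ at $t_1^2t_2^3$, that settles the matter. The secondary subtlety is fixing $\lw(f_3)$ to the maximal word $x_2^4x_1$ rather than merely to its bidegree, since the dimension count at $(1,4)$ alone does not single out which of the five words is the obstruction.
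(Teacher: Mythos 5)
Your argument contains a genuine gap, and it sits exactly where you yourself locate the crux. Everything depends on the product expansion $H_A(t_1,t_2)=\prod_{i=1}^{5}(1-t_1^{a_i}t_2^{b_i})^{-1}$, which you extract from Theorem \ref{theorem-global-dimension} ``since $A$ is a domain of finite GK-dimension and finite global dimension''. Those are not the hypotheses of that theorem: it assumes $\gkdim A<\infty$ \emph{and} $\max\{n\geq 0: C_n\neq\emptyset\}<\infty$, i.e.\ boundedness of the Anick chains on the obstruction set $\lw(G)$ --- a property of the reduced Gr\"obner basis with respect to the chosen order, which is precisely the object this lemma is in the process of determining. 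Finite global dimension does not imply it, because Anick's resolution need not be minimal; one only has the inequality $\gldim A\leq\max\{n: C_n\neq\emptyset\}$, and the right-hand side can be infinite while the left is finite. Concretely, the Jordan plane written as $k\langle x_1,x_2\rangle/(x_2x_1-x_1x_2-x_2^2)$ (the paper's Jordan plane with the variables switched) has $\gldim=\gkdim=2$, yet under the fixed deg-lex order its reduced Gr\"obner basis has $x_2^2$ among its leading words, so $x_2\to x_2\to\cdots\to x_2$ is a path in $\Gamma(\lw(G))$ and $x_2^n$ is an $n$-chain for every $n$. Hence the chain hypothesis cannot be waved in from regularity, and verifying it here would presuppose knowledge of $\lw(G)$: the reasoning is circular.

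Nor can the gap be closed by any refinement of Hilbert-series bookkeeping, because the configuration you must exclude is numerically self-consistent; your degree-by-degree matching would indeed be a nice shortcut if the product form were available, but it is not. In the case $\deg(f_3)=(2,3)$ (the paper's possibility (3): $\lw(f_2)=x_2x_1x_2^2x_1$, $\lw(f_3)=x_2^2x_1x_2x_1$, which forces one further relation $f_4$ of degree $(3,4)$), the resolution allowed by Lemma \ref{minimal resolution} has Euler characteristic
\[
1-t_1-t_2+t_1^2t_2+2t_1^2t_2^3-2t_1^3t_2^3-t_1^3t_2^5+t_1^5t_2^5+t_1^4t_2^6-t_1^5t_2^6,
\]
whose specialization at $t_1=t_2=t$ equals $(1-t)^2(1-t^2)(1-t^3)(1-t^4)$ on the nose; it merely fails to factor into binomials $(1-t_1^{a}t_2^{b})$. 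So ``$1/H_A$ factors'' is not a consequence of the available numerical data --- it is essentially equivalent to the exclusion being proved --- and the paper kills this case non-numerically, via Lemma \ref{truncated diamond lemma}: the compositions with $f_1$ force $b_2=a_1$ and $c_2=-a_1^2$, while the requirement that $S(f_2,f_3)[1,x_2x_1,x_2x_1,1]$ reduce to a nonzero multiple of $f_4$ forces $c_2\neq-b_2^2$, a contradiction. A second, lesser, gap: you assert rather than prove $\lw(f_3)=x_2^4x_1$, and the remark that it is the unique Lyndon word of degree $(1,4)$ begs Question \ref{question D}. The correct argument uses the domain hypothesis: if $\lw(f_3)=x_2^{j}x_1x_2^{4-j}$ with $j<4$, then every word occurring in $f_3$ ends in $x_2^{4-j}$, so $f_3=g\,x_2^{4-j}$ with $g$ homogeneous of degree $(1,j)$; since $A$ is a domain and $x_2\neq0$ in $A$, we get $g\in\mathfrak{a}$, but every nonzero homogeneous element of $\mathfrak{a}$ of total degree $\leq4$ lies in $(f_1)$ and therefore has $\deg_1\geq2$, whence $g=0$, contradicting $f_3\neq0$.
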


\begin{proof}
If $\deg(f_3)=(2, 3)$, then by Lemma \ref{minimal resolution},
the only possible minimal free resolution of $k_A$ is of the form
\begin{equation*}
0 \to
\begin{array}[t]{c}
A\\[-0.5em] \text{\tiny\rm (5, 6)}
\end{array}
\to
\begin{array}[t]{c}
A^2\\[-0.5em] \text{\tiny \rm (5, 5)}\\[-0.7em] \text{\tiny \rm (4, 6)}
\end{array}
\to
\begin{array}[t]{c}
A^3\\[-0.5em] \text{\tiny \rm (3, 3)}\\[-0.7em] \text{\tiny \rm (3, 3)}\\[-0.7em] \text{\tiny \rm (3, 5)}
\end{array}
\to
\begin{array}[t]{c}
A^3\\[-0.5em] \text{\tiny \rm (2, 1)}\\[-0.7em] \text{\tiny \rm (2, 3)}\\[-0.7em] \text{\tiny \rm (2, 3)}
\end{array}
\to
\begin{array}[t]{c}
A^2\\[-0.5em] \text{\tiny \rm (1, 0)}\\[-0.7em] \text{\tiny \rm (0, 1)}
\end{array}
\to
\begin{array}[t]{c}
A\\[-0.5em] \text{\tiny \rm (0, 0)}
\end{array}
\to k_A
\to 0.
\end{equation*}
By checking all normal words of degree $(2, 3)$ modulo $G^1$, there are three possibilities:

(1) $\lw(f_2)=x_2x_1x_2x_1x_2,\ \lw(f_3)=x_2x_1x_2^2x_1$. Then
$$
H_{R^3}(t_1,t_2)-H_{R}(t_1,t_2)=-t_1^3t_2^3+
\sum_{|\alpha|\geq7}r_{\alpha}t_1^{\alpha_1}t_2^{\alpha_2}.
$$

(2) $\lw(f_2)=x_2x_1x_2x_1x_2,\ \lw(f_3)=x_2^2x_1x_2x_1$. Then
$$
H_{R^3}(t_1,t_2)-H_{R}(t_1,t_2)=-t_1^3t_2^3+t_1^2t_2^4+
\sum_{|\alpha|\geq7}r_{\alpha}t_1^{\alpha_1}t_2^{\alpha_2}.
$$

(3) $\lw(f_2)=x_2x_1x_2^2x_1,\ \lw(f_3)=x_2^2x_1x_2x_1$. Then
$$
H_{R^3}(t_1,t_2)-H_{R}(t_1,t_2)=t_1^3t_2^4+
\sum_{|\alpha|\geq8}r_{\alpha}t_1^{\alpha_1}t_2^{\alpha_2}.
$$
So $G$ has a unique element of total degree $7$,
which is $f_4$ of degree $(3, 4)$.
By checking all normal words of degree $(3, 4)$ modulo $G^3$,
one has $\lw(f_4)=x_2x_1x_2x_1x_2x_1x_2$ and $f_1, f_2, f_3$ are of the following form:
\begin{eqnarray*}
\begin{split}
&f_1=x_2x_1^2+ a_1x_1x_2x_1+a_2x_1^2x_2,\\
&f_2=x_2x_1x_2^2x_1+b_2x_2x_1x_2x_1x_2+b_3x_1x_2^3x_1
+b_4x_1x_2^2x_1x_2+b_5x_1x_2x_1x_2^2+b_6x_1^2x_2^3,\\
&f_3=x_2^2x_1x_2x_1+c_2x_2x_1x_2x_1x_2+c_3x_1x_2^3x_1
+c_4x_1x_2^2x_1x_2+c_5x_1x_2x_1x_2^2+c_6x_1^2x_2^3.\\
\end{split}
\end{eqnarray*}
Apply Lemma \ref{truncated diamond lemma}, the compositions
$ S(f_2, f_1)[1, x_1, x_2x_1x_2, 1]$ and $S(f_3, f_1)[1, x_1, x_2^2x_1, 1]$
should be trivial modulo $G^3$, so $b_2=a_1$ and $c_2=-a_1^2$.
However, the remainder of the composition
$S(f_2, f_3)[1, x_2x_1, x_2x_1, 1]$ should be a nonzero scalar multiple of $f_4$,
which implies that $c_2\neq-b_2^2$.

All these three possibilities give contradictions and so we finish the proof.
\end{proof}
Now the following proposition is clear from Lemma \ref{minimal resolution}.
\begin{proposition}
\label{resolution-type-(3,5,5)}
The minimal free resolution of $k_A$ is of the form:
\begin{equation*}
0 \to
\begin{array}[t]{c}
A\\[-0.5em] \text{\tiny\rm (4, 7)}
\end{array}
\to
\begin{array}[t]{c}
A^2\\[-0.5em] \text{\tiny \rm (4, 6)}\\[-0.7em] \text{\tiny \rm (3, 7)}
\end{array}
\to
\begin{array}[t]{c}
A^3\\[-0.5em] \text{\tiny \rm (3, 3)}\\[-0.7em] \text{\tiny \rm (2, 4)}\\[-0.7em] \text{\tiny \rm (2, 6)}
\end{array}
\to
\begin{array}[t]{c}
A^3\\[-0.5em] \text{\tiny \rm (2, 1)}\\[-0.7em] \text{\tiny \rm (2, 3)}\\[-0.7em] \text{\tiny \rm (1, 4)}
\end{array}
\to
\begin{array}[t]{c}
A^2\\[-0.5em] \text{\tiny \rm (1, 0)}\\[-0.7em] \text{\tiny \rm (0, 1)}
\end{array}
\to
\begin{array}[t]{c}
A\\[-0.5em] \text{\tiny \rm (0, 0)}
\end{array}
\to k_A
\to 0.
\end{equation*}
\end{proposition}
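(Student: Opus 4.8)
The plan is to read off the shape of the resolution from Lemma~\ref{minimal resolution} and then determine the single remaining unknown, the Gorenstein parameter $\gamma=(p,q)$. First I would record the input already assembled: $G_{min}=\{f_1,f_2,f_3\}$ consists of exactly three relations, of $\Z^2$-degrees $(2,1)$, $(2,3)$ and $(1,4)$. Thus in the notation of Lemma~\ref{minimal resolution} we have $m=3$ and, after reindexing, $(u_1,v_1)=(2,1)$, $(u_2,v_2)=(2,3)$, $(u_3,v_3)=(1,4)$. Because $A$ is AS-regular of global dimension $5$ with two degree-one generators, its minimal free resolution of $k_A$ has the self-dual five-term shape displayed in that lemma; only the pair $(p,q)$ is left to be pinned down.

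Next I would fix the total degree $|\gamma|=p+q$. Collapsing the $\Z^2$-grading to the $\Z$-grading $A\mapsto A^{\rm gr}$ and invoking the comparison of Gorenstein parameters, the $\Z$-graded minimal resolution recalled at the opening of this section, whose top term is $A^{\rm gr}(-11)$, gives $|\gamma|=11$, that is $p+q=11$.

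Then I would feed the relation degrees into the inequalities of Lemma~\ref{minimal resolution}. The $u$-values $\{1,2,2\}$ sort to $u_{\sigma(1)}=1$ and $u_{\sigma(m)}=2$, so part~(1) forces $p>u_{\sigma(1)}+u_{\sigma(m)}=3$, hence $p\geq4$; the $v$-values $\{1,3,4\}$ sort to $v_{\tau(2)}=3$ and $v_{\tau(m)}=4$, so part~(2) forces $q\geq v_{\tau(2)}+v_{\tau(m)}=7$. Combined with $p+q=11$ this leaves no slack: $p=4$ and $q=7$. Substituting $(p,q)=(4,7)$ back into the self-dual shape gives top term $A(4,7)$, generator degrees $(4,6),(3,7)$ for the right-hand $A^2$, and generator degrees $(p-u_i,q-v_i)=(2,6),(2,4),(3,3)$ for the right-hand $A^3$, which is exactly the claimed resolution.

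I do not expect a real obstacle: this is bookkeeping once Lemma~\ref{minimal resolution} and the value $p+q=11$ are in hand. The only point needing care is the sorting in the inequalities --- one must apply part~(2) to the \emph{second}-largest $v$-value, yielding the sharp bound $q\geq7$, rather than settling for the weaker strict inequality $q>v_{\tau(1)}+v_{\tau(m)}=5$. Indeed $p\geq4$ and $q\geq6$ alone would still permit $(p,q)=(5,6)$, so it is precisely the sharp bound $q\geq7$, together with $p\geq4$ and $p+q=11$, that forces $(p,q)=(4,7)$ uniquely.
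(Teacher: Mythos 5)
Your proposal is correct and is exactly the argument the paper leaves implicit: the paper's proof is the single sentence that the proposition is ``clear from Lemma~\ref{minimal resolution}'', and your bookkeeping --- relation degrees $(2,1),(2,3),(1,4)$, total Gorenstein parameter $|\gamma|=11$ from the $\Z$-graded resolution, and the inequalities of Lemma~\ref{minimal resolution} (including the sharp bound $v_{\tau(2)}+v_{\tau(m)}\leq q$, which is indeed needed to exclude $(p,q)=(5,6)$) --- is precisely how that claim is made rigorous. No gap; same approach, just written out.
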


\begin{lemma}
$\lw(f_2)=x_2^2x_1x_2x_1$.
\end{lemma}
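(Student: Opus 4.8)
The plan is to determine $\lw(f_2)$ by the same Hilbert-driven bookkeeping used so far: for each possible leading word $w$ I would form the monomial algebra $R^2=k\langle x_1,x_2\rangle/(x_2x_1^2,\,w)$, compute its bigraded Hilbert series from Anick's chains on the antichain $\{x_2x_1^2,w\}$ (the resolution being minimal here), and test it against the already-known $H_R=H_A$ read off from Proposition \ref{resolution-type-(3,5,5)}, using the inequality $H_{R^2}\ge H_R$ of Lemma \ref{lemma-hilbert-compare}(2) and the relation-count of Lemma \ref{lemma-new-relation}. Since $G$ is reduced, $\lw(f_2)\in\nw(\{f_1\})$, so it is one of the seven words of degree $(2,3)$ having no factor $x_2x_1^2$; in deg-lex order these are
$$x_2^2x_1x_2x_1\ >\ x_2x_1x_2^2x_1\ >\ x_2x_1x_2x_1x_2\ >\ x_1x_2^3x_1\ >\ x_1x_2^2x_1x_2\ >\ x_1x_2x_1x_2^2\ >\ x_1^2x_2^3,$$
the target being the largest one (and, incidentally, the only Lyndon word among the seven, in line with Question \ref{question D}).

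First I would dispose of the four candidates that do not end in $x_2x_1$. For each of them the leading word does not overlap the prefix $x_2x_1$ of $x_2x_1^2$, and a short count in degree $(3,3)$ gives $\dim R^2_{(3,3)}=6$, whereas the product expansion of $H_R=\prod_i(1-\mathbf t^{\alpha_i})^{-1}$ (with exponents $(1,0),(0,1),(1,1),(1,2),(1,3)$) yields $\dim R_{(3,3)}=7$. This contradicts $H_{R^2}\ge H_R$, so none of these four can be $\lw(f_2)$.

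The delicate case is the three survivors $x_2^2x_1x_2x_1,\ x_2x_1x_2^2x_1,\ x_1x_2^3x_1$, which all end in $x_2x_1$, hence overlap $x_2x_1^2$ on the right in degree $(3,3)$, and which agree with the target through total degree $6$. For $w=x_2^2x_1x_2x_1$ this right overlap is the only one, so $H_{R^2}^{-1}=1-t_1-t_2+t_1^2t_2+t_1^2t_2^3-t_1^3t_2^3$, the difference $H_{R^2}-H_R$ is nonnegative with least term $t_1t_2^4$, Lemma \ref{lemma-new-relation} identifies the next relation as $f_3$ of degree $(1,4)$, and adjoining $\lw(f_3)=x_2^4x_1$ and continuing reproduces exactly the syzygy degrees $(3,3),(2,4),(2,6)$ of Proposition \ref{resolution-type-(3,5,5)}. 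By contrast $x_1x_2^3x_1$ also overlaps $x_2x_1^2$ \emph{on the left} (a chain in degree $(3,4)$), and $x_2x_1x_2^2x_1$ overlaps \emph{itself} (a chain in degree $(3,5)$); after adjoining $f_3$ these make $H_{R^3}-H_R$ acquire a strictly positive coefficient in degree $(3,4)$, resp. $(3,5)$, forcing by Lemma \ref{lemma-new-relation} an extra Gröbner relation there. Such a relation has no counterpart in the resolution of Proposition \ref{resolution-type-(3,5,5)}, and since the offending left/self overlap iterates, pursuing the computation drives $H_{R^i}-H_R$ either to a negative coefficient (violating Lemma \ref{lemma-hilbert-compare}) or to an infinite obstruction set (violating finiteness of $G$ in Theorem \ref{theorem-global-dimension}, as $\gkdim A<\infty$). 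Either way both are excluded, leaving $\lw(f_2)=x_2^2x_1x_2x_1$.

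The main obstacle is exactly this last separation. The three candidates ending in $x_2x_1$ cannot be told apart by the raw inequality $H_{R^2}\ge H_R$ at low degree, because all of them match the target through total degree $6$; one must push the Hilbert-driven computation into total degrees $7$ and $8$, where the self- and cross-overlaps of $x_2x_1x_2^2x_1$ and $x_1x_2^3x_1$ first surface as spurious obstructions, and then argue that the relations they force are incompatible with the prescribed resolution and the finiteness and domain hypotheses. Everything preceding that step — the reduction to seven candidates and the elimination of four of them in degree $(3,3)$ — is routine.
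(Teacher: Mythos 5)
Your candidate list and your first elimination phase are correct: the seven normal words of degree $(2,3)$ are as you list them, and for the four candidates ending in $x_2$ the count $\dim R^2_{(3,3)}=6<7=\dim A_{(3,3)}$ indeed violates $H_{R^2}\geq H_R$ from Lemma \ref{lemma-hilbert-compare}. (The paper trims the list differently: if $\lw(f_2)$ begins with $x_1$, then every normal word of degree $(2,3)$ that is smaller in the deg-lex order also begins with $x_1$, so $f_2=x_1g$ with $g\neq0$ in $A$, contradicting that $A$ is a domain; this leaves the three candidates beginning with $x_2$, and $x_2x_1x_2x_1x_2$ is then killed by the same degree-$(3,3)$ count.) The genuine gap is in your treatment of the two surviving wrong candidates $x_2x_1x_2^2x_1$ and $x_1x_2^3x_1$, which is where the real content of the lemma lies. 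Your first claimed contradiction --- that the forced extra Gr\"{o}bner relation ``has no counterpart in the resolution of Proposition \ref{resolution-type-(3,5,5)}'' --- is not a contradiction at all: the reduced Gr\"{o}bner basis may properly contain the minimal generating set, and in the correct case it does (Proposition \ref{type-(3,5,5)} has $G=\{f_1,f_2,f_3,f_4\}$ with $f_4$ of degree $(2,5)$ while $G_{min}=\{f_1,f_2,f_3\}$). An element of $G\setminus G_{min}$ constrains the resolution in no way; only a \emph{minimal} generator in a forbidden degree would, and you never show that the forced relations lie outside $(f_1,f_2,f_3)$. Your second claimed contradiction --- that continuing the computation must end either in a negative coefficient or in an infinite obstruction set --- is precisely the statement to be proved, not a principle one may invoke: a priori the computation could stabilize consistently, which is exactly what happens for the true candidate.

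What is missing is the finite computation that the paper actually performs for $\lw(f_2)=x_2x_1x_2^2x_1$: from $H_{R^3}-H_R=t_1^3t_2^5+\cdots$, Lemma \ref{lemma-new-relation} gives exactly one new element $f_4\in G$, of degree $(3,5)$; Lemma \ref{truncated diamond lemma} forces the remainder of the self-composition $S(f_2,f_2)[1,x_2^2x_1,x_2x_1x_2,1]$ modulo $G^3$ to be a nonzero scalar multiple of $f_4$, which restricts $\lw(f_4)$ to two explicit normal words of degree $(3,5)$; and for either choice one computes the negative coefficient $-t_1^3t_2^6$ in $H_{R^4}-H_R$, the actual contradiction. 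For $x_1x_2^3x_1$ no such computation is needed: all monomials that can occur in such an $f_2$ begin with $x_1$, so this candidate dies by the one-line domain argument above. Your observation that $H_{R^3}-H_R$ acquires the coefficient $+1$ in degree $(3,4)$ (resp.\ $(3,5)$) for these two candidates is correct, but by itself it eliminates nothing.
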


\begin{proof}
By checking all normal words of degree $(2, 3)$ modulo $G^1$, we see that
$\lw(f_2)$ belongs to the set of words $\{x_2x_1x_2x_1x_2,\ x_2x_1x_2^2x_1,\ x_2^2x_1x_2x_1\}$.
It is impossible that $\lw(f_2)=x_2x_1x_2x_1x_2$, because otherwise it gives a contradiction:
$$
H_{R^3}(t_1, t_2)-H_{R}(t_1, t_2)=-t_1^3t_2^3+
\sum_{|\alpha|\geq7}r_{\alpha}t_1^{\alpha_1}t_2^{\alpha_2}.
$$
It remains to exclude that $\lw(f_2)=x_2x_1x_2^2x_1$. If so, then
$$
H_{R^3}(t_1, t_2)-H_{R}(t_1, t_2)=t_1^3t_2^5+
\sum_{|\alpha|\geq9}r_{\alpha}t_1^{\alpha_1}t_2^{\alpha_2}.
$$
Thus $G$ has a unique element of total degree $8$, which is $f_4$ of
degree $(3, 5)$. Apply Lemma \ref{truncated diamond lemma},
the remainder of the composition
$S(f_2, f_2)[1, x_2^2x_1, x_2x_1x_2, 1]$
modulo $G^3$ should be a nonzero scalar multiple of $f_4$.
By checking normal words of degree $(3,5)$ modulo $G^3$, we have
$\lw(f_4)\in \{x_2x_1x_2x_1x_2^3x_1, x_2x_1x_2x_1x_2x_1x_2^2\}$,
and both choices of $\lw(f_4)$ gives the contradiction
$$
H_{R^4}(t_1, t_2)-H_{R}(t_1, t_2)=-t_1^3t_2^6+
\sum_{|\alpha|\geq10}r_{\alpha}t_1^{\alpha_1}t_2^{\alpha_2}.
$$
Now we reach the only possibility $\lw(f_2)=x_2^2x_1x_2x_1$ as required.
\end{proof}

\begin{proposition}
\label{type-(3,5,5)}
$G=\{f_1, f_2, f_3, f_4\}$ and $G_{min}=\{f_1, f_2, f_3\}$ with
{\small\begin{eqnarray*}
\begin{split}
&f_1=x_2x_1^2+ a_1x_1x_2x_1+a_2x_1^2x_2,\\
&f_2=x_2^2x_1x_2x_1+b_1x_2x_1x_2^2x_1+b_2x_2x_1x_2x_1x_2+b_3x_1x_2^3x_1
+b_4x_1x_2^2x_1x_2+b_5x_1x_2x_1x_2^2+b_6x_1^2x_2^3,\\
&f_3=x_2^4x_1+c_1x_2^3x_1x_2+c_2x_2^2x_1x_2^2+c_3x_2x_1x_2^3+c_4x_1x_2^4,\\
&f_4=x_2^3x_1x_2^2x_1+d_1x_2^2x_1x_2^3x_1+d_2x_2^2x_1x_2^2x_1x_2 +d_3x_2x_1x_2^3x_1x_2 + d_4x_2x_1x_2^2x_1x_2^2+d_5x_2x_1x_2x_1x_2^3 \\&\hskip10mm
+d_6x_1x_2^3x_1x_2^2 +d_7x_1x_2^2x_1x_2^3+d_8x_1x_2x_1x_2^4+d_9x_1^2x_2^5,
\end{split}
\end{eqnarray*}}
where $a_i, b_i, c_i, d_i\in k$. Moreover one has $a_2c_4\neq0$ and $b_1\neq c_1$.
\end{proposition}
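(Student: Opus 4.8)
The plan is to run the Hilbert-driven procedure of Lemmas \ref{lemma-hilbert-compare} and \ref{lemma-new-relation}, starting from the leading words $\lw(f_1)=x_2x_1^2$, $\lw(f_2)=x_2^2x_1x_2x_1$, $\lw(f_3)=x_2^4x_1$ already fixed above and from the equality $H_R=H_A$, which is determined by the resolution of Proposition \ref{resolution-type-(3,5,5)}.

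First I would read off the shapes of $f_1,f_2,f_3$. Since $G$ is reduced, each is monic and all of its non-leading monomials are normal modulo $G\setminus\{f_i\}$ and strictly smaller than $\lw(f_i)$ in the same bidegree. As $\lw(f_2),\lw(f_3)$ (and the yet-to-be-found $\lw(f_4)$) have length $\ge5$, for the short relations the only binding condition is avoidance of $x_2x_1^2$; enumerating the words of bidegrees $(2,1),(2,3),(1,4)$ that avoid $x_2x_1^2$ yields exactly the displayed expressions with free coefficients $a_i,b_i,c_i$. The inequality $a_2c_4\neq0$ then comes from the domain hypothesis: if $a_2=0$ then $f_1=(x_2x_1+a_1x_1x_2)x_1$ forces $x_2x_1+a_1x_1x_2=0$ in $A$ after cancelling $x_1$, a spurious relation in bidegree $(1,1)$ where $\dim A_{(1,1)}=2$; symmetrically $c_4=0$ lets me cancel a leading $x_2$ from $f_3$ and produce a relation in bidegree $(1,3)$. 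Both contradict minimal generation in degrees $(2,1),(2,3),(1,4)$, so $a_2,c_4\neq0$.

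Next I would locate the remaining Gr\"obner elements. Computing the bivariate series of the monomial algebra $R^3=k\langle x_1,x_2\rangle/(x_2x_1^2,\,x_2^2x_1x_2x_1,\,x_2^4x_1)$ by a transfer-matrix count of normal words and comparing with $H_A$, I expect $\dim R^3_n=\dim A_n$ for every total degree $n\le6$; since $H_{R^3}-H_R\ge0$ by Lemma \ref{lemma-hilbert-compare}(2), this forces \emph{every} bidegree coefficient of total degree $\le6$ to vanish and the first nonzero coefficient to be $1$ at $t_1^2t_2^5$. By Lemma \ref{lemma-new-relation} this gives exactly one new relation $f_4$, of bidegree $(2,5)$. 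To pin down $\lw(f_4)$ I would use Lemma \ref{truncated diamond lemma}: the only overlap ambiguities of total degree $\le7$ are $S(f_2,f_1)$, $S(f_3,f_1)$ (bidegrees $(3,3),(2,4)$, necessarily trivial modulo $G^3$ since the difference vanishes there) and $S(f_3,f_2)$ of bidegree $(2,5)$, which must therefore reduce to a nonzero multiple of $f_4$. A direct expansion gives $S(f_3,f_2)=(c_1-b_1)\,x_2^3x_1x_2^2x_1+(\text{smaller or }G^3\text{-reducible terms})$, and $x_2^3x_1x_2^2x_1$ is normal modulo $G^3$; hence $c_1\neq b_1$ makes it the leading word, whereas $c_1=b_1$ would send the top term to $x_2^3x_1x_2x_1x_2$, which contains $\lw(f_2)$ and reduces further, collapsing the whole composition to $0$ modulo $G^3$ and contradicting $H_{R^3}\neq H_R$ at $(2,5)$. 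This yields $b_1\neq c_1$ and $\lw(f_4)=x_2^3x_1x_2^2x_1$; enumerating the normal words of bidegree $(2,5)$ below it produces the displayed form with coefficients $d_i$.

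Finally I would terminate the procedure and settle minimality. Recomputing with the fourth leading word adjoined, $H_{R^4}=H_A$ should hold, so by Lemma \ref{lemma-hilbert-compare}(3) the list closes and $G=\{f_1,f_2,f_3,f_4\}$. Since $f_4$ is the reduction of $S(f_3,f_2)\in(f_2,f_3)$ modulo $G^3$, it lies in $(G^3)=(f_1,f_2,f_3)$, hence $f_4\notin G_{min}$; as $f_1,f_2,f_3$ are the minimal generators of $\mathfrak a$ recorded by the resolution of Proposition \ref{resolution-type-(3,5,5)}, we get $G_{min}=\{f_1,f_2,f_3\}$. The main obstacle is the Hilbert-series bookkeeping underlying everything—verifying that $H_{R^3}-H_R$ is exactly $t_1^2t_2^5+(\text{higher total degree})$ and that adjoining $\lw(f_4)$ makes the difference vanish identically—together with the delicate point that extracts $b_1\neq c_1$ from the requirement that $S(f_3,f_2)$ not collapse modulo $G^3$; both rest on finite but fiddly bivariate monomial counts.
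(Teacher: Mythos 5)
Your overall Hilbert-driven scaffolding matches the paper's proof: the shapes of $f_1,f_2,f_3$ by enumerating normal words, the count $H_{R^3}-H_R=t_1^2t_2^5+(\text{total degree}\geq 8)$ giving exactly one further Gr\"obner element $f_4$ of bidegree $(2,5)$, the termination $H_{R^4}=H_R$ once $\lw(f_4)$ is adjoined, the identification $G_{min}=\{f_1,f_2,f_3\}$, and your domain argument for $a_2c_4\neq0$ (cancelling $x_1$ on the right of $f_1$, resp.\ $x_2$ on the left of $f_3$) is a correct filling-in of what the paper leaves implicit. You are also right that the remainder of $S(f_3,f_2)[1,x_2x_1,x_2^2,1]$ modulo $G^3$ must be a nonzero scalar multiple of $f_4$, and that the coefficient of the top word $x_2^3x_1x_2^2x_1$ in that composition is exactly $c_1-b_1$.

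The gap is the dichotomy you build on this. Your claim that $c_1=b_1$ would make the composition ``collapse to $0$ modulo $G^3$'' is a non sequitur: killing the top term leaves, among others, the terms $(c_2-b_3)\,x_2^2x_1x_2^3x_1$ and $-b_4\,x_2^2x_1x_2^2x_1x_2$, both of which are \emph{normal} modulo $G^3$, plus whatever the reductions of the reducible words produce; nothing forces all of this to cancel. So in the case $c_1=b_1$ your argument yields only that the remainder --- still a nonzero multiple of $f_4$ --- has leading word strictly smaller than $x_2^3x_1x_2^2x_1$, for instance possibly $x_2^2x_1x_2^3x_1$. To rule that out you must eliminate every smaller normal word of bidegree $(2,5)$ as a candidate for $\lw(f_4)$, which is precisely the step the paper performs and you skip: words beginning with $x_1$ are excluded by the domain hypothesis, and the remaining serious candidates are excluded by further monomial counts, the decisive one being that adjoining $x_2^2x_1x_2^3x_1$ to the obstruction set forces $H_{R^4}(t_1,t_2)-H_R(t_1,t_2)=-t_1^2t_2^6+\sum_{|\alpha|\geq9}r_\alpha t_1^{\alpha_1}t_2^{\alpha_2}$, contradicting Lemma \ref{lemma-hilbert-compare}(2). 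Only after $\lw(f_4)=x_2^3x_1x_2^2x_1$ has been established this way does the composition argument legitimately give $b_1\neq c_1$ (the coefficient $c_1-b_1$ of the known leading word must be the nonzero leading coefficient), which is exactly the order of deduction in the paper. As it stands, your shortcut proves the implication ``$c_1\neq b_1\Rightarrow\lw(f_4)=x_2^3x_1x_2^2x_1$'' but not its hypothesis, so the case analysis over candidate leading words must be restored.
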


\begin{proof}
By previous discussions, we have $G_{min}=\{f_1, f_2, f_3\}$ and $f_1, f_2, f_3$ are of the given form by checking normal words of corresponding degrees modulo $G^3$ (which are same modulo $G$). Since $A$ is a domain, we have $a_2c_4\neq0$. It remains to show that $b_1\neq c_1$ and $G=\{f_1, f_2, f_3, f_4\}$ with $f_4$ of the giving form. Since
$$
H_{R^3}(t_1, t_2)-H_{R}(t_1, t_2)=t_1^2t_2^5+
\sum_{|\alpha|\geq8}r_{\alpha}t_1^{\alpha_1}t_2^{\alpha_2}.
$$
So $G$ has exactly one relation of total degree $7$,
which is $f_4$ of degree $(2, 5)$. By checking all normal words
of degree $(2, 5)$ modulo $G^3$, we have $\lw(f_4)\in \{\, x_2^2x_1x_2^3x_1,\,  x_2^3x_1x_2^2x_1\,\}$. It is impossible that $\lw(f_4)=x_2^2x_1x_2^3x_1$, for otherwise it gives a contradiction
$$
H_{R^4}(t_1, t_2)-H_{R}(t_1, t_2)=-t_1^2t_2^6+
\sum_{|\alpha|\geq9}r_{\alpha}t_1^{\alpha_1}t_2^{\alpha_2}.
$$
Thus we have $\lw(f_4)=x_2^3x_1x_2^2x_1$, which follows that $H_{R^4}(t_1, t_2)=H_{R}(t_1, t_2)$ and hence $G=G^4$. Now by checking normal words of degree $(2, 5)$ modulo $G$, one gets that $f_4$ is of the given form. Applying Lemma \ref{truncated diamond lemma}, the remainder of the composition $S(f_3, f_2)[1, x_2x_1, x_2^2, 1]$ modulo $G^3$ is a non-zero scalar multiple of $f_4$ in $k\langle x_1, x_2\rangle$. So, $b_1\neq c_1$ follows.
\end{proof}

Now let's turn to find possible solutions of the coefficients of $f_i$ in Proposition \ref{type-(3,5,5)}. Firstly note that there are six compositions of polynomials in $G$, namely
{\small\begin{eqnarray*}
\begin{array}{llll}
&S(f_2, f_1)[1, x_1, x_2^2x_1, 1],&S(f_3, f_1)[1, x_1, x_2^3, 1],
&S(f_3, f_2)[1, x_2x_1, x_2^2, 1],
\\
&S(f_3, f_4)[1, x_2^2x_1, x_2, 1], &S(f_4, f_1)[1, x_1, x_2^3x_1x_2, 1], &S(f_4, f_2)[1, x_2x_1, x_2^3x_1, 1].
\end{array}
\end{eqnarray*}}
We use a simple Program in Maple to help us calculate the remainder of these compositions modulo $G$ in $k\langle x_1, x_2\rangle$, and denote the results by $r_1, r_2, \cdots, r_6$, respectively.

Apply Lemma \ref{truncated diamond lemma}, as a necessary condition for $G$ to be Gr\"{o}bner, all the coefficients of $r_i$'s should be zero. By Lemma \ref{twisting}, it is equivalent to break the system of equations given by setting the coefficients of $r_i$'s to $0$, together with $a_2c_4\neq0,\ b_1\neq c_1$ into three situations:
\begin{enumerate}
\item[Case 1:] $a_1=0,\, b_1=0,\ c_3=1$. Maple Solving Command gives no solutions.
\item[Case 2:] $a_1=0,\, b_1=1$. Maple Solving Command gives $2$ families of solutions. Twist the $b_1$ back and include them as a parameter $p$, we get the families of solutions $\mathcal{A},\, \mathcal{B}$ below.
\item[Case 3:] $a_1=1$.  Maple Solving Command gives $3$ families of solutions. Twist the $a_1$ back and include them as a parameter $p$, we get the families of solutions $\mathcal{C},\, \mathcal{D},\, \mathcal{E}$ below.
\end{enumerate}

\begin{example}
Let $\mathcal{A}=\{\mathcal{A}(p): p\neq0\}$, where $\mathcal{A}(p)=k\langle x_1, x_2\rangle/(f_1, f_2, f_3)$
is the $\Z^2$-graded algebra with relations
{\small\begin{eqnarray*}
\begin{split}
&f_1=x_2x_1^2-p^2x_1^2x_2,\\
&f_2=x_2^2x_1x_2x_1+px_2x_1x_2^2x_1-p^3x_1x_2^2x_1x_2-p^4x_1x_2x_1x_2^2,\\
&f_3=x_2^4x_1-p^4x_1x_2^4.
\end{split}
\end{eqnarray*}}
\end{example}

\begin{enumerate}
\item $\mathcal{A}(p)\in \mathcal{A}$ is isomorphic to a twisting $\Z^2$-graded algebra of $\mathcal{A}(q)$.
\item If $\mathcal{A}(p)\in\mathcal{A}$, then the element $z:=x_2^3x_1+px_2^2x_1x_2+p^2x_2x_1x_2^2+p^3x_1x_2^3$ is regular
normal in $\mathcal{A}(p)$ and the factor algebra
$\mathcal{A}(p)/(z)$ is isomorphic to $\mathbf{A}(\mathbf{p})$ in \cite{LPWZ} with $\mathbf{p}=p$.
\item Algebras $\mathcal{A}(p)\in\mathcal{A}$ are AS-regular of global dimension $5$. They are all strongly noetherian, Auslander regular and Cohen-Macaulay.
\end{enumerate}

\begin{example}
Let $\mathcal{B}=\{\mathcal{B}(p): p\neq0\}$, where $\mathcal{B}(p)=k\langle x_1, x_2\rangle/(f_1, f_2, f_3)$
is the $\Z^2$-graded algebra with relations
{\small\begin{eqnarray*}
\begin{split}
&f_1=x_2x_1^2-p^2x_1^2x_2,\\
&f_2=x_2^2x_1x_2x_1+px_2x_1x_2^2x_1-p^3x_1x_2^2x_1x_2-p^4x_1x_2x_1x_2^2,\\
&f_3=x_2^4x_1+p^4x_1x_2^4.
\end{split}
\end{eqnarray*}}
\end{example}

\begin{enumerate}
\item $\mathcal{B}(p)\in \mathcal{B}$ is isomorphic to a twisting $\Z^2$-graded algebra of $\mathcal{B}(1)$.
\item If $\mathcal{B}(p)\in\mathcal{B}$, denote $z_1:=x_1^2,\ z_2:=x_2^4,\ z_3:= (x_2x_1+px_1x_2)^2,\
z_4:=x_2^3x_1+px_2^2x_1x_2+p^2x_2x_1x_2^2+p^3x_1x_2^3$, and $z_5:=x_2^2x_1+p^2x_1x_2^2$, then $\{z_1, z_2, \cdots, z_5\}$ is a
normal sequence in $\mathcal{B}(p)$ such that the factor algebra
$\mathcal{B}(p)/(z_1, z_2, z_3, z_4, z_5)$ is finite dimensional.
\item Algebras $\mathcal{B}(p)\in\mathcal{B}$ are AS-regular of global dimension $5$. They are all strongly noetherian, Auslander regular and Cohen-Macaulay.
\end{enumerate}

\begin{example}
Let $\mathcal{C}=\{\mathcal{C}(p,j): p\neq0,\; j^2+j+1=0\}$, where
$\mathcal{C}(p,j)=k\langle x_1, x_2\rangle/(f_1, f_2, f_3)$
is the $\Z^2$-graded algebra with relations
{\small\begin{eqnarray*}
\begin{split}
&f_1=x_2x_1^2+ px_1x_2x_1+p^2x_1^2x_2,\\
&f_2=x_2^2x_1x_2x_1-p^2x_2x_1x_2x_1x_2+p^2(1+j)x_1x_2^3x_1
   + p^4x_1x_2x_1x_2^2+p^5(1-j)x_1^2x_2^3,\\
&f_3=x_2^4x_1-pjx_2^3x_1x_2-p^3jx_2x_1x_2^3-p^4(1+j)x_1x_2^4.
\end{split}
\end{eqnarray*}}
\end{example}

\begin{enumerate}
\item $\mathcal{C}(p, j)\in \mathcal{C}$ is isomorphic to a twisting $\Z^2$-graded algebra of $\mathcal{C}(1, j)$.
\item If $\mathcal{C}(p, j)\in\mathcal{C}$, then the element $z:=x_2^3x_1-p^3jx_1x_2^3$ is
regular normal in $\mathcal{C}(p, j)$ and the factor algebra
$\mathcal{C}(p, j)/(z)$ is isomorphic to $\mathbf{C}(\mathbf{p},\mathbf{j})$ in \cite{LPWZ} with
$\mathbf{p}=p,\ \mathbf{j}=-j$.
\item Algebras $\mathcal{C}(p, j)\in\mathcal{C}$ are AS-regular of global dimension $5$. They are all strongly noetherian, Auslander regular and Cohen-Macaulay.
\end{enumerate}

\begin{example}
Let $\mathcal{D}=\{\mathcal{D}(p): p\neq0\}$, where $\mathcal{D}(p)=k\langle x_1, x_2\rangle/(f_1, f_2, f_3)$
is the $\Z^2$-graded algebra with relations
{\small\begin{eqnarray*}
\begin{split}
&f_1=x_2x_1^2-2px_1x_2x_1+p^2x_1^2x_2,\\
&f_2=x_2^2x_1x_2x_1-3px_2x_1x_2^2x_1+2p^2x_2x_1x_2x_1x_2+2p^2x_1x_2^3x_1
   -3p^3x_1x_2^2x_1x_2+p^4x_1x_2x_1x_2^2,\\
&f_3=x_2^4x_1-4px_2^3x_1x_2+6p^2x_2^2x_1x_2^2-4p^3x_2x_1x_2^3+p^4x_1x_2^4.
\end{split}
\end{eqnarray*}}
\end{example}

\begin{enumerate}
\item $\mathcal{D}(p)\in \mathcal{D}$ is isomorphic to a twisting $\Z^2$-graded algebra of $\mathcal{D}(1)$.
\item If $\mathcal{D}(p)\in\mathcal{D}$, then the element $z:=x_2^3x_1-3px_2^2x_1x_2+3p^2x_2x_1x_2^2-p^3x_1x_2^3$ is
regular normal in $\mathcal{D}(p)$ and the factor algebra
$\mathcal{D}(p)/(z)$ is isomorphic to $\mathbf{D}(\mathbf{v},\mathbf{p})$ in \cite{LPWZ} with
$\mathbf{v}=-2p,\ \mathbf{p}=-p$.
\item $\mathcal{D}(1)$ is the enveloping algebra of the $\Z^2$-graded Lie algebra $\mathfrak{g}={\rm Lie}(x_1, x_2)/(f_1, f_2, f_3)$, where ${\rm Lie}(x_1, x_2)$ is the free Lie algebra on $\{x_1, x_2\}$ with grading given by
$\deg(x_1)=(1, 0)$, $\deg(x_2)=(0, 1)$, and $f_1=[[x_2 x_1] x_1]$, $f_2=[[x_2 [x_2 x_1]] [x_2 x_1]]$,\, $f_3=[x_2 [x_2 [x_2 [x_2 x_1]]]]$.
\item Algebras $\mathcal{D}(p)\in\mathcal{D}$ are AS-regular of global dimension $5$. They are all strongly noetherian, Auslander regular and Cohen-Macaulay.
\end{enumerate}

\begin{example}
Let $\mathcal{E}=\{\mathcal{E}(p, j): p\neq0,\; j^4+j^3+j^2+j+1=0\}$, where $\mathcal{E}(p, j)=k\langle x_1, x_2\rangle/(f_1, f_2, f_3)$
is the $\Z^2$-graded algebra with relations
{\small\begin{eqnarray*}
\begin{split}
&f_1=x_2x_1^2+px_1x_2x_1+p^2(j^2+j^3)^2x_1^2x_2,\\
&f_2=x_2^2x_1x_2x_1+p(j^2+j^3)^2x_2x_1x_2^2x_1-p^2(j^2+j^3)x_2x_1x_2x_1x_2\\
 & \hskip10mm +p^2(2+2j-j^3)x_1x_2^3x_1+p^3(3+5j+3j^2)x_1x_2^2x_1x_2\\
 &  \hskip10mm +p^4(4+8j+7j^2+2j^3)x_1x_2x_1x_2^2+p^5(2+8j+10j^2+5j^3)x_1^2x_2^3,\\
&f_3=x_2^4x_1+p(1+j)x_2^3x_1x_2+p^2(1+j)^2x_2^2x_1x_2^2\\
&  \hskip10mm  +p^3(1+j)^3x_2x_1x_2^3+p^4(3j+5j^2+3j^3)x_1x_2^4.
\end{split}
\end{eqnarray*}}
\end{example}

\begin{enumerate}
\item $\mathcal{E}(p, j)\in \mathcal{E}$ is isomorphic to a twisting $\Z^2$-graded algebra of $\mathcal{E}(1, j)$.
\item If $\mathcal{E}(p, j)\in\mathcal{E}$, then the element $z:=x_2^3x_1+p(j^2+j^3)^2x_2^2x_1x_2-p^2(j^2+j^3)^3x_2x_1x_2^2 -p^3(j^2+j^3)^3x_1x_2^3$
    is regular normal in $\mathcal{E}(p, j)$ and the factor algebra
$\mathcal{E}(p, j)/(z)$ is isomorphic to $\mathbf{D}(\mathbf{v},\mathbf{p})$ in \cite{LPWZ} with
$\mathbf{v}=p,\ \mathbf{p}=-p(j^2+j^3)$.
\item Algebras $\mathcal{E}(p, j)\in\mathcal{E}$ are AS-regular of global dimension $5$. They are all strongly noetherian, Auslander regular and Cohen-Macaulay.
\end{enumerate}

\section{Classification of type (3, 4, 7)}

Next we turn to the 5-dimensional AS-regular algebras of type (3, 4, 7), these are determined by three defining relations of the (total) degrees 3, 4, 7, respectively. One class of AS-regular algebras, denoted by $\mathcal{F}$, is obtained in this case.

Readily the minimal free resolution of the trivial module $k$ over $A^{\rm gr}$ is of the form
\begin{eqnarray*}
0
\to A^{\rm gr}(-12)
\to A^{\rm gr}(-11)^2
\to A^{\rm gr}(-5)\oplus A^{\rm gr}(-8)\oplus A^{\rm gr}(-9)\hskip25mm \\
\to
A^{\rm gr}(-3)\oplus A^{\rm gr}(-4)\oplus A^{\rm gr}(-7)
\to A^{\rm gr}(-1)^2
\to A^{\rm gr}
\to k_A
\to 0.
\end{eqnarray*}
Thus
$$
H_{(R^0)^{\rm gr}}(t)-H_{R^{\rm gr}}(t)=t^3+\sum_{n\geq4}r_nt^n
$$
and so $G$ has exactly one relation of total degree $3$,
which is $f_1$. Since $\deg_1(f_1) \geq \deg_2(f_1)$, one has
\begin{eqnarray}
\label{type-(3,4,7)-1}
\deg(f_1)=(2,1)\ \text{ with }\ \lw(f_1)=x_2x_1^2,\nonumber
\end{eqnarray}
which gives rise to
$$
H_{(R^1)^{\rm gr}}(t)-H_{R^{\rm gr}}(t)=t^4+\sum_{n\geq5}r_nt^n.
$$
So $G$ has exactly one relation of total degree $4$, which is $f_2$, and $G_{min}\supseteq\{f_1, f_2\}$. By checking normal words of degree
$(3, 1),(2, 2),(1, 3)$ modulo $G^1$, respectively, one has
$$
\deg(f_2)=(2, 2) \text{ with }
\lw(f_2)=x_2x_1x_2x_1\quad \text{or} \quad \deg(f_2)=(1, 3)
\text{ with } \lw(f_2)=x_2^3x_1.
$$

\begin{lemma}
$\deg(f_2)=(1, 3)$ with $\lw(f_2)=x_2^3x_1$, $\deg(f_3)=(3, 4)$ and $G_{min}=\{f_1, f_2, f_3\}$.
\end{lemma}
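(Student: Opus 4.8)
The plan is to pin down $\lw(f_2)$, then $\lw(f_3)$, and finally $G_{min}$, by the Hilbert-driven procedure: for each admissible candidate leading word I compare the series $H_{R^i}(t_1,t_2)$ of the associated monomial algebra against the target $H_R(t_1,t_2)=H_A(t_1,t_2)$, invoking Lemma \ref{lemma-hilbert-compare} (the differences have nonnegative coefficients and $H_{R^i}=H_R$ iff $G=G^i$) and Lemma \ref{lemma-new-relation} (the lowest term of a difference pins the degree of the next relation). From the resolution over $A^{\rm gr}$ displayed above one reads off $H_A(t)^{-1}=1-2t+t^3+t^4-t^5+t^7-t^8-t^9+2t^{11}-t^{12}$, hence $H_A(t)=1+2t+4t^2+7t^3+11t^4+17t^5+25t^6+35t^7+\cdots$. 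By the discussion preceding the lemma we already know $\deg(f_1)=(2,1)$, $\lw(f_1)=x_2x_1^2$, and that $\lw(f_2)$ is either $x_2x_1x_2x_1$ (bidegree $(2,2)$) or $x_2^3x_1$ (bidegree $(1,3)$); I treat these two cases in turn, excluding the first and extracting $f_3$ from the second.

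The crux, and the step I expect to be the main obstacle, is ruling out $\lw(f_2)=x_2x_1x_2x_1=(x_2x_1)^2$. The difficulty is that this candidate survives the coarse numerics: the self-overlap $(x_2x_1)^3$ of the proper power $\lw(f_2)$ yields a composition of total degree $6$, and computing the monomial series gives $H_{R^2}(t_1,t_2)-H_R(t_1,t_2)=t_1^3t_2^3+\cdots$, which has nonnegative coefficients and first differs from the target only in total degree $6$ — so, unlike the spurious patterns eliminated in the type $(3,5,5)$ analysis, no negative coefficient appears and the obstruction must be produced at the level of the coefficients. I would write $f_1=x_2x_1^2+a\,x_1x_2x_1+b\,x_1^2x_2$ and $f_2=x_2x_1x_2x_1$ plus a tail in the normal words $x_1x_2x_2x_1,\ x_1x_2x_1x_2,\ x_1^2x_2^2$ of bidegree $(2,2)$ below $\lw(f_2)$; then impose, via Lemma \ref{truncated diamond lemma}, that the degree-$5$ composition $S(f_2,f_1)$ be trivial modulo $G^2$ (forced since the difference vanishes through total degree $5$), track the reduction of $S(f_2,f_2)$ that must create the unique bidegree-$(3,3)$ obstruction, and push the computation into total degree $7$ under the hypothesis that $A$ be a domain realizing $H_A$. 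I expect the resulting coefficient system to be inconsistent (or to violate the domain condition), which excludes the bidegree-$(2,2)$ option and thus forces $\deg(f_2)=(1,3)$, $\lw(f_2)=x_2^3x_1$. This is precisely where a Maple-assisted reduction is required.

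On the surviving branch $\lw(f_2)=x_2^3x_1$ the argument is clean. The pair $\big(\lw(f_1),\lw(f_2)\big)=(x_2x_1^2,\,x_2^3x_1)$ has a single ambiguity, the overlap $x_2^3x_1^2$ of total degree $5$; since $H_A\le H_{A^2}\le H_{R^2}$ and $H_{R^2}$ agrees with $H_R$ through total degree $6$, all three series coincide there, so $S(f_2,f_1)$ reduces to $0$ and $\{f_1,f_2\}$ is a Gröbner basis of $(f_1,f_2)$. The monomial computation then yields $H_{R^2}(t_1,t_2)-H_R(t_1,t_2)=t_1^3t_2^4+\sum_{\alpha\not\le(3,4)}r_\alpha t_1^{\alpha_1}t_2^{\alpha_2}$, so Lemma \ref{lemma-new-relation} produces exactly one new relation $f_3$ of degree $(3,4)$; because $f_3\notin(f_1,f_2)$ it is a minimal generator. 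This establishes $\deg(f_3)=(3,4)$.

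Finally, for $G_{min}$, the resolution over $A^{\rm gr}$ has $F_2$ of rank $3$ with generators in total degrees $3,4,7$, so $\mathfrak a$ has exactly three minimal generators, in these degrees. By Lemma \ref{lemma-hilbert-compare}(4), $G_{min}$ is a minimal generating set of $\mathfrak a$, and since $f_1,f_2,f_3$ already realize the degrees $3,4,7$ we conclude $G_{min}=\{f_1,f_2,f_3\}$, completing the proof.
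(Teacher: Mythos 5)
Your skeleton (case analysis on $\lw(f_2)$, Hilbert-driven comparison, then $G_{min}$) matches the paper's, and your closing derivation of $G_{min}=\{f_1,f_2,f_3\}$ is fine, but both substantive steps have genuine gaps. First, the exclusion of $\lw(f_2)=x_2x_1x_2x_1$ --- which you rightly call the crux --- is never actually established: you set up an undetermined-coefficient reduction and state that you \emph{expect} the system to be inconsistent ``or to violate the domain condition,'' deferring to Maple. That expectation also misidentifies the mechanism: the domain hypothesis is not a polynomial condition on coefficients, and the paper's contradiction is structural, needing no coefficient work. Namely, since $H_{(R^2)^{\rm gr}}(t)-H_{R^{\rm gr}}(t)=t^6+\cdots$, the set $G$ has exactly one relation $f_3$ of total degree $6$ and $(G^2)_{\le 6}=(G)_{\le 6}$; by Lemma \ref{truncated diamond lemma} the remainder of the unique composition $S(f_2,f_2)[1,x_2x_1,x_2x_1,1]$ modulo $G^2$ must be a nonzero scalar multiple of $f_3$, so $\deg(f_3)=(3,3)$. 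But every normal word of degree $(3,3)$ modulo $G^2$ begins with $x_1$ (they are $x_1^3x_2^3,\ x_1^2x_2x_1x_2^2,\ x_1^2x_2^2x_1x_2,\ x_1^2x_2^3x_1,\ x_1x_2x_1x_2^2x_1$), and under deg-lex every degree-$(3,3)$ word beginning with $x_2$ exceeds every one beginning with $x_1$; hence $f_3=x_1f'$ for some $f'$. In the domain $A$ this forces $f'\in\mathfrak{a}$, so $\lw(f')$ has a factor in $\lw(G^2)$ (its total degree is $5<6$), whence $\lw(f_3)$ has a proper factor in $\lw(G\setminus\{f_3\})$, contradicting that $G$ is reduced.

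Second, on the surviving branch your derivation of $\deg(f_3)=(3,4)$ is circular. You invoke the bivariate identity $H_{R^2}(t_1,t_2)-H_R(t_1,t_2)=t_1^3t_2^4+\cdots$, but $H_R(t_1,t_2)=H_A(t_1,t_2)$ is unknown at this stage: only the single-variable series $H_{A^{\rm gr}}(t)$ is available, since the bigraded resolution (Proposition \ref{resolution-type-(3,4,7)}) is deduced \emph{after}, and \emph{from}, this lemma. The single-variable series yields only that $G\setminus G^2$ contains exactly one relation of total degree $7$; fixing its bidegree is precisely what must be proven, and Lemma \ref{minimal resolution} alone rules out only $(1,6)$ and $(6,1)$ (for instance $(4,3)$ with $(p,q)=(6,6)$ and $(2,5)$ with $(p,q)=(4,8)$ pass its inequalities). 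The paper closes this with the same leading-word/domain argument as above: for $\alpha\in\{(6,1),(5,2),(4,3)\}$ every normal word of degree $\alpha$ modulo $G^2$ begins with $x_1$, while for $\alpha\in\{(2,5),(1,6)\}$ every such word ends with $x_2$, so that $f_3=f'x_2$ and the symmetric factorization argument applies; either way no admissible $\lw(f_3)$ exists. Your (correct) observation that $\{f_1,f_2\}$ is a Gr\"{o}bner basis of $(f_1,f_2)$ does not substitute for this step.
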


\begin{proof}
If $\deg(f_2)=(2, 2)$ with $\lw(f_2)=x_2x_1x_2x_1$, then
$$
H_{(R^2)^{\rm gr}}(t)-H_{R^{\rm gr}}(t)=t^6+\sum_{n\geq7}r_nt^n,
$$
and hence $G$ has exactly one relation of total degree $6$, which is $f_3$.
Consider the unique composition \linebreak $S(f_2, f_2)[1, x_2x_1, x_2x_1, 1]$
of relations in $G^2$ of total degree $6$.
Since $(G^2)_{\leq6}=(G)_{\leq6}$, the remainder of the composition should be a nonzero scalar multiple of $f_3$ by Lemma \ref{truncated diamond lemma}. So $\deg(f_3)=(3, 3)$. However, by checking all normal
words of degree $(3,3)$ modulo $G^2$, any choice of $\lw(f_3)$ will contradict that $A$
is a domain. Now we reach the only possibility that $\deg(f_2)=(1, 3)$ with $\lw(f_2)=x_2^3x_1$,
which follows that
$$
H_{(R^2)^{\rm gr}}(t)-H_{R^{\rm gr}}(t)=t^7+\sum_{n\geq8}r_nt^n.
$$
So $G$ has exactly one relation of total degree $7$, which is $f_3$.
If $\deg(f_3)=\alpha\neq(3, 4)$, then by checking all normal words of degree $\alpha$ modulo $G^2$,
any choice of $\lw(f_3)$ would contradict that $A$ is a domain. So $\lw(f_3)=(3, 4)$.
The fact $G_{min}=\{f_1, f_2, f_3\}$ follows from the trivial observation that
the relations in $G$ other than $f_1, f_2, f_3$ are all of total degree $\geq8$.
We finish the proof.
\end{proof}

\begin{proposition}
\label{resolution-type-(3,4,7)}
The minimal free resolution of $k_A$ is of the form
\begin{equation*}
0 \to
\begin{array}[t]{c}
A\\[-0.5em] \text{\tiny\rm (5, 7)}
\end{array}
\to
\begin{array}[t]{c}
A^2\\[-0.5em] \text{\tiny \rm (5, 6)}\\[-0.7em] \text{\tiny \rm (4, 7)}
\end{array}
\to
\begin{array}[t]{c}
A^3\\[-0.5em] \text{\tiny \rm (2, 3)}\\[-0.7em] \text{\tiny \rm (4, 4)}\\[-0.7em] \text{\tiny \rm (3, 6)}
\end{array}
\to
\begin{array}[t]{c}
A^3\\[-0.5em] \text{\tiny \rm (2, 1)}\\[-0.7em] \text{\tiny \rm (1, 3)}\\[-0.7em] \text{\tiny \rm (3, 4)}
\end{array}
\to
\begin{array}[t]{c}
A^2\\[-0.5em] \text{\tiny \rm (1, 0)}\\[-0.7em] \text{\tiny \rm (0, 1)}
\end{array}
\to
\begin{array}[t]{c}
A\\[-0.5em] \text{\tiny \rm (0, 0)}
\end{array}
\to k_A
\to 0.
\end{equation*}
\end{proposition}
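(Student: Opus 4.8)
The plan is to read the resolution straight off Lemma~\ref{minimal resolution}, whose hypothesis already encodes the self-duality forced by the Gorenstein condition (AS3). For an AS-regular algebra of global dimension $5$ the minimal free resolution of $k_A$ is self-dual up to the Gorenstein shift $\gamma=(p,q)$, so it must take the displayed shape with $F_0=A$, $F_1=A(-(1,0))\oplus A(-(0,1))$ (reflecting the two generators $x_1,x_2$), $F_5=A(-(p,q))$, $F_4=A(-(p-1,q))\oplus A(-(p,q-1))$, and with the generators of $F_3$ being $\gamma$ minus those of $F_2$. By the preceding lemma the minimal relations are $f_1,f_2,f_3$ of degrees $(2,1),(1,3),(3,4)$, and these are exactly the generator degrees $(u_i,v_i)$ of $F_2$. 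Thus the whole resolution is determined once I pin down $\gamma=(p,q)$.

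First I would fix the total degree of $\gamma$. The $A^{\rm gr}$-resolution recalled at the start of the section has top term $A^{\rm gr}(-12)$, so $A^{\rm gr}$ has Gorenstein parameter $12$; by the earlier lemma relating the Gorenstein parameters of $A$ and $A^{\rm gr}$ this gives $|\gamma|=p+q=12$.

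Next I would feed the three relation degrees into the inequalities of Lemma~\ref{minimal resolution}. With $(u_i,v_i)\in\{(2,1),(1,3),(3,4)\}$, sorting the first coordinates as $1\le 2\le 3$ turns item (1) into $1+3<p$ and $2+3\le p$, both reading $p\ge 5$; sorting the second coordinates as $1\le 3\le 4$ turns item (2) into $1+4<q$ and $3+4\le q$, forcing $q\ge 7$. Combined with $p+q=12$ this leaves only $(p,q)=(5,7)$, and substituting back gives $F_5=A(-(5,7))$, $F_4=A(-(4,7))\oplus A(-(5,6))$, and $F_3$-generators $(5,7)-(u_i,v_i)=(3,6),(4,4),(2,3)$, which is precisely the asserted resolution.

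There is essentially no computational obstacle here, since the hard enumeration was already carried out in establishing the degrees of $f_1,f_2,f_3$. The only point to watch is that the two inequalities from each coordinate, together with the single constraint $p+q=12$, really do cut the solution set down to a unique point; this is exactly why both a strict and a non-strict inequality appear in Lemma~\ref{minimal resolution}, and why the sharper bound $q\ge 7$ (rather than merely $q\ge 6$) is the decisive one.
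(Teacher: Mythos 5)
Your proposal is correct and matches the paper's approach: the paper proves this proposition by exactly the same appeal to Lemma~\ref{minimal resolution}, with the relation degrees $(2,1),(1,3),(3,4)$ from the preceding lemma and the Gorenstein parameter total $p+q=12$ read off from the $A^{\rm gr}$-resolution. Your write-up simply makes explicit the inequality bookkeeping ($p\ge 5$, $q\ge 7$, hence $(p,q)=(5,7)$) that the paper dismisses as ``obvious.''
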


\begin{proof}
It is also obvious by applying Lemma \ref{minimal resolution}.
\end{proof}

\begin{lemma}
$\lw(f_3)=x_2^2x_1x_2x_1x_2x_1$.
\end{lemma}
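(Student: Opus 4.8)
The plan is to pin down $\lw(f_3)$ by the Hilbert-driven method: list all candidate leading words and kill the wrong ones by comparing bigraded Hilbert series. Since $\deg(f_3)=(3,4)$ and $G_{min}=\{f_1,f_2,f_3\}$ are already established, Lemma \ref{lemma-new-relation} (applied at $\beta=(3,4)$) tells us that $\lw(f_3)$ must be one of the words of bidegree $(3,4)$ that are normal modulo $G^2=\{f_1,f_2\}$ but not modulo $G$. So the first step is to enumerate the normal words of bidegree $(3,4)$ modulo $\lw(G^2)=\{x_2x_1^2,\,x_2^3x_1\}$, i.e.\ the words with three $x_1$'s and four $x_2$'s avoiding both factors $x_2x_1x_1$ and $x_2^3x_1$; each such word is a candidate for $\lw(f_3)$.

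For each candidate $w$, I would form the monomial algebra $R^3=k\langle x_1,x_2\rangle/(x_2x_1^2,\,x_2^3x_1,\,w)$, compute its bigraded Hilbert series $H_{R^3}(t_1,t_2)$ (by counting normal words, or via Anick's chains as in Theorem \ref{theorem-global-dimension}), and compare it with $H_R(t_1,t_2)=H_A(t_1,t_2)$, which is known explicitly as the reciprocal of the alternating sum of the free-module bidegrees in the minimal resolution of Proposition \ref{resolution-type-(3,4,7)}. By Lemma \ref{lemma-hilbert-compare}(2) one always has $H_{R^3}\ge H_R$ coefficientwise, so any candidate $w$ for which $H_{R^3}-H_R$ acquires a \emph{negative} coefficient is immediately discarded. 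I expect this crude sign test to eliminate the bulk of the candidates, leaving only $x_2^2x_1x_2x_1x_2x_1$ (which is in fact the lex-largest normal word of bidegree $(3,4)$) and perhaps one or two stubborn words.

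For any candidate surviving the crude test the elimination is two-layered, mirroring the treatment of $\lw(f_2)$ in Section 3: assuming $\lw(f_3)=w$, I would read off from $H_{R^3}-H_R$ the bidegree of the next Gr\"obner element $f_4$, enumerate its possible leading words among the normal words of that bidegree modulo $G^3$, and derive a contradiction — either a negative Hilbert coefficient at the following step, or, via Lemma \ref{truncated diamond lemma}, a composition whose nonzero remainder forces a relation incompatible with the resolution degrees or with $A$ being a domain. Carrying this out should leave $x_2^2x_1x_2x_1x_2x_1$ as the only survivor, hence equal to $\lw(f_3)$.

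The main obstacle is the bookkeeping in the middle: correctly enumerating the bidegree-$(3,4)$ normal words and evaluating $H_{R^3}-H_R$ for each, then handling those candidates that pass the sign test, where one must chase the induced relation $f_4$ one degree further and invoke the domain hypothesis to close the case. Conceptually the inequality $H_{R^3}\ge H_R$ together with the explicit $H_R$ does all the work; the difficulty is purely in the finite but somewhat intricate case analysis.
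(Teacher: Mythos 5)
Your overall strategy --- enumerate the candidate leading words in bidegree $(3,4)$ and kill the wrong ones by the coefficientwise inequality $H_{R^3}\geq H_R$ --- is exactly the paper's proof, and for the candidates beginning with $x_2$ it works just as you predict: there are four such words, namely $x_2x_1x_2x_1x_2x_1x_2$, $x_2x_1x_2^2x_1x_2x_1$, $x_2x_1x_2x_1x_2^2x_1$ and $x_2^2x_1x_2x_1x_2x_1$, and the paper disposes of the first three purely by the sign test (the differences begin with $-t_1^4t_2^4$, resp.\ $-t_1^4t_2^6$), so no ``stubborn words'' and no second layer are needed there.

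The gap is in your enumeration step. The normal words of bidegree $(3,4)$ modulo $G^2$ are not just these four: there are seven more, all beginning with $x_1$ (e.g.\ $x_1x_2x_1x_2x_1x_2^2$, $x_1^2x_2^2x_1x_2^2$, $x_1^3x_2^4$), and Lemma \ref{lemma-new-relation} by itself does not exclude them as candidates for $\lw(f_3)$. For these your sign test fails: for instance, for $w=x_1^3x_2^4$ the difference $H_{R^3}-H_R$ has first nonzero coefficient $+1$, in degree $(3,5)$, so your procedure would posit a new relation $f_4$ of that degree and wander into an open-ended case analysis instead of halting with a contradiction. What is needed --- and what the paper uses implicitly when it lists only four candidates --- is the observation that a leading word of a reduced Gr\"obner basis element here cannot begin with $x_1$: under the deg-lex order, every word of bidegree $(3,4)$ beginning with $x_2$ is larger than every word beginning with $x_1$, so $\lw(f_3)=x_1u$ would force \emph{every} word of $f_3$ to begin with $x_1$, i.e.\ $f_3=x_1g$; since $A$ is a domain and $x_1\neq0$ in $A$, the polynomial $g$ is a nonzero element of $\mathfrak{a}$ of total degree $6$, hence $\lw(g)$ must have a factor in $\lw(G^2)$ (as $G$ has no elements of total degree $5$ or $6$), contradicting that $\lw(f_3)=x_1\lw(g)$ is normal modulo $G^2$. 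With this one observation your candidate list shrinks to the paper's four words and your plan becomes precisely the paper's proof.
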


\begin{proof}

By checking all normal words of degree $(3, 4)$ modulo $G^2$, one has
$$
\lw(f_3)\in\{x_2x_1x_2x_1x_2x_1x_2,\ x_2x_1x_2x_2x_1x_2x_1,\
x_2x_1x_2x_1x_2^2x_1,\ x_2^2x_1x_2x_1x_2x_1\}.
$$
$\lw(f_3)=x_2x_1x_2x_1x_2x_1x_2$ causes the contradiction
$$
H_{R^3}(t_1, t_2)-H_{R}(t_1, t_2)=-t_1^4t_2^4+
\sum_{|\alpha|\geq8}r_{\alpha}t_1^{\alpha_1}t_2^{\alpha_2},
$$
also, $\lw(f_3)\in\{x_2x_1x_2x_2x_1x_2x_1,\ x_2x_1x_2x_1x_2^2x_1\}$
gives the contradiction
\begin{eqnarray*}
H_{R^3}(t_1,t_2)-H_{R}(t_1,t_2)=-t_1^4t_2^6+
\sum_{|\alpha|\geq11}r_{\alpha}t_1^{\alpha_1}t_2^{\alpha_2},
\end{eqnarray*}
we reach the only possibility $\lw(f_3)=x_2^2x_1x_2x_1x_2x_1$ and finish the proof.
\end{proof}

\begin{proposition}
\label{type-(3,4,7)}
$G=\{f_1, f_2, f_3, f_4\}$ and $G_{min}=\{f_1, f_2, f_3\}$ with
{\small\begin{eqnarray*}
\begin{split}
&f_1=x_2x_1^2+ a_1x_1x_2x_1+a_2x_1^2x_2,\\
&f_2=x_2^3x_1+b_1x_2^2x_1x_2+b_2x_2x_1x_2^2+b_3x_1x_2^3,\\
&f_3=x_2^2x_1x_2x_1x_2x_1+c_1x_2x_1x_2^2x_1x_2x_1+c_2x_2x_1x_2x_1x_2^2x_1 +c_3x_2x_1x_2x_1x_2x_1x_2+c_4x_1x_2^2x_1x_2^2x_1
\\& \hskip10mm
+c_5x_1x_2^2x_1x_2x_1x_2+c_6x_1x_2x_1x_2^2x_1x_2 +c_7x_1x_2x_1x_2x_1x_2^2
+c_8x_1^2x_2^2x_1x_2^2+c_9x_1^2x_2x_1x_2^3+c_{10}x_1^3x_2^4,\\
&f_4=x_2^2x_1x_2^2x_1x_2x_1+d_1x_2^2x_1x_2x_1x_2^2x_1+d_2x_2x_1x_2^2x_1x_2^2x_1 +d_3x_2x_1x_2^2x_1x_2x_1x_2+d_4x_2x_1x_2x_1x_2^2x_1x_2
\\& \hskip10mm
+d_5x_2x_1x_2x_1x_2x_1x_2^2+d_6x_1x_2^2x_1x_2^2x_1x_2 +d_7x_1x_2^2x_1x_2x_1x_2^2+d_8x_1x_2x_1x_2^2x_1x_2^2
\\& \hskip10mm
+d_9x_1x_2x_1x_2x_1x_2^3+d_{10}x_1^2x_2^2x_1x_2^3
+d_{11}x_1^2x_2x_1x_2^4+d_{12}x_1^3x_2^5,
\end{split}
\end{eqnarray*}}
where $a_i, b_i, c_i, d_i\in k$. Moreover, one has $a_1=b_1=b_2=0$, $c_1\neq0$ and $a_2^3=b_3^2\neq0$.
\end{proposition}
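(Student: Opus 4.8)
The plan is to pin down the full reduced Gr\"obner basis $G$ together with the precise shape of each of its members, feeding the resolution of Proposition~\ref{resolution-type-(3,4,7)} into the Hilbert-driven machinery of Lemmas~\ref{truncated diamond lemma}, \ref{lemma-hilbert-compare} and \ref{lemma-new-relation}. Since the preceding lemmas already give $G_{min}=\{f_1,f_2,f_3\}$ with $\lw(f_1)=x_2x_1^2$, $\lw(f_2)=x_2^3x_1$ and $\lw(f_3)=x_2^2x_1x_2x_1x_2x_1$, the first step is purely combinatorial: listing all normal words modulo $G^3$ of the degrees $(2,1)$, $(1,3)$ and $(3,4)$ produces exactly the displayed expansions of $f_1,f_2,f_3$ with undetermined coefficients $a_i,b_i,c_i$. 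The domain hypothesis at once forces $a_2\neq0$ and $b_3\neq0$: if $a_2=0$ then $f_1=(x_2x_1+a_1x_1x_2)x_1$, so $x_2x_1+a_1x_1x_2$ would vanish in the domain $A$, a relation of degree $(1,1)$ that does not exist; symmetrically $b_3=0$ would factor an $x_2$ off the left of $f_2$ and produce a nonexistent relation of degree $(1,2)$.

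Next I would locate $f_4$ and close off $G$. Using the known Hilbert series $H_R=H_A$ and the three leading words already fixed, the difference $H_{R^3}(t_1,t_2)-H_R(t_1,t_2)$ has lowest positive term $t_1^3t_2^5$, so Lemma~\ref{lemma-new-relation} produces exactly one additional relation $f_4$ of degree $(3,5)$. Enumerating the normal words of degree $(3,5)$ modulo $G^3$ and discarding every candidate but $x_2^2x_1x_2^2x_1x_2x_1$ — each wrong choice forcing a negative leading coefficient in $H_{R^4}-H_R$, exactly as in the lemmas preceding this proposition — fixes $\lw(f_4)=x_2^2x_1x_2^2x_1x_2x_1$. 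One then checks $H_{R^4}=H_R$, whence $G=G^4=\{f_1,f_2,f_3,f_4\}$ by Lemma~\ref{lemma-hilbert-compare}(3), and expanding $f_4$ over the normal words of degree $(3,5)$ yields the stated $d_i$-form.

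The coefficient identities come from the overlaps of small total degree. The composition $S(f_2,f_1)[1,x_1,x_2^2,1]$, whose ambiguity word is $x_2^3x_1^2$ and whose degree is $(2,3)$, is the crucial one: because $H_{R^2}$ and $H_R$ agree through total degree $6$ there is no relation of $G$ in total degrees $5$ or $6$, so Lemma~\ref{truncated diamond lemma} forces this composition to be trivial modulo $G^2$. Reducing it modulo $\{f_1,f_2\}$ and equating every coefficient to zero gives $b_1=a_1$, $b_2=0$, $a_1a_2=0$ together with the relation $a_2^3=b_3^2$; since $a_2\neq0$ this collapses to $a_1=b_1=b_2=0$ and $a_2^3=b_3^2\neq0$.

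Finally, $c_1\neq0$ is read off from the overlap that manufactures $f_4$, namely $S(f_2,f_3)[1,x_2x_1x_2x_1,x_2,1]$ with ambiguity word $x_2^3x_1x_2x_1x_2x_1$. In $f_2(x_2x_1x_2x_1)-x_2f_3$ the two leading summands cancel, and after reduction modulo $G^3$ the result must equal a nonzero scalar multiple of $f_4$; the coefficient of the normal word $\lw(f_4)=x_2^2x_1x_2^2x_1x_2x_1$ in this remainder is, up to a nonzero scalar, $c_1$, so $c_1\neq0$ (this is the exact analogue of the condition $b_1\neq c_1$ obtained for type $(3,5,5)$). The main obstacle throughout is the explicit rewriting in these two compositions — above all the reduction of the degree-$8$ words occurring in $S(f_2,f_3)$ modulo $G^3$, which must be shown to leave $c_1$ as the sole contribution to the leading coefficient — and the exhaustive elimination of the competing degree-$(3,5)$ leading words for $f_4$; these are precisely the reductions delegated to the Maple routines.
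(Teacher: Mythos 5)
Your proposal follows essentially the same route as the paper's own proof: the same Hilbert-driven step producing a single relation $f_4$ of degree $(3,5)$, the same elimination of the two competing leading words to get $\lw(f_4)=x_2^2x_1x_2^2x_1x_2x_1$ and hence $G=G^4$, and the same two compositions $S(f_2,f_1)[1,x_1,x_2^2,1]$ and $S(f_2,f_3)[1,x_2x_1x_2x_1,x_2,1]$ for the coefficient identities and for $c_1\neq0$. The extra details you supply --- the domain argument forcing $a_2\neq0$ and $b_3\neq0$, and the verification that the remainder of the second composition is a nonzero scalar multiple of $f_4$ whose $\lw(f_4)$-coefficient is $b_1-c_1=-c_1$ --- are correct elaborations of steps the paper leaves implicit.
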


\begin{proof}
By previous discussion, we have $G_{min}=\{f_1, f_2, f_3\}$ and $f_1, f_2, f_3$ are of the given form by checking normal words of corresponding degrees modulo $G^3$ (which are same modulo $G$). The equations $a_1=b_1=b_2=0$ and $a_2^3=b_3^2$ follow from that the coefficients of the remainder of the composition $S(f_2, f_1)[1, x_1, x_2^2, 1]$ modulo $G^3$ are all zero. The inequality $b_3^2\neq0$ follows from that $A$ is a domain. It remains to show that $c_1\neq0$, and $G=\{f_1, f_2, f_3, f_4\}$ with $f_4$ of the giving form. Since
$$
H_{R^3}(t_1,t_2)-H_{R}(t_1,t_2)=t_1^3t_2^5+
\sum_{|\alpha|\geq11}r_{\alpha}t_1^{\alpha_1}t_2^{\alpha_2},
$$
$G$ has exactly one relation of total degree $8$,
which is $f_4$ of degree $(3,5)$. By Checking all normal words
of degree $(3, 5)$ modulo $G^3$, one has $\lw(f_4)\in \{x_2x_1x_2^2x_1x_2^2x_1,\ x_2^2x_1x_2x_1x_2^2x_1,\
x_2^2x_1x_2^2x_1x_2x_1\}$. Since $\lw(f_4)=x_2x_1x_2^2x_1x_2^2x_1$ gives the contradiction
\begin{eqnarray*}
H_{R^4}(t_1, t_2)-H_{R}(t_1, t_2)=-t_1^3t_2^6+
\sum_{|\alpha|\geq10}r_{\alpha}t_1^{\alpha_1}t_2^{\alpha_2},\\
\end{eqnarray*}
and $\lw(f_4)=x_2^2x_1x_2x_1x_2^2x_1$ gives the contradiction
\begin{eqnarray*}
H_{R^4}(t_1, t_2)-H_{R}(t_1, t_2)=-t_1^4t_2^6+
\sum_{|\alpha|\geq10}t_1^{\alpha_1}t_2^{\alpha_2},
\end{eqnarray*}
we reach the only possibility $\lw(f_4)=x_2^2x_1x_2^2x_1x_2x_1$,
which follows that $H_{R^4}(t_1, t_2)=H_{R}(t_1, t_2)$ and thereby $G=G^4$.
Now by checking normal words of degree $(3,5)$ modulo $G$, one gets that $f_4$ is of the given form. Applying Lemma \ref{truncated diamond lemma}, the remainder of the composition $S(f_2, f_3)[1, x_2x_1x_2x_1, x_2, 1]$ modulo $G^3$
is a non-zero scalar multiple of $f_4$. It follows immediately that $b_1\neq c_1$ and then we finish the proof.
\end{proof}

Now we search for the possible solutions of the coefficients of $f_i$ in Proposition \ref{type-(3,4,7)}. Firstly note that there are six compositions of polynomials in $G$, namely
{\small\begin{eqnarray*}
\begin{array}{llll}
&S(f_2, f_1)[1, x_1, x_2^2, 1], &S(f_2, f_3)[1, x_2x_1x_2x_1, x_2, 1],
&S(f_2, f_4)[1, x_2^2x_1x_2x_1, x_2, 1],\\
&S(f_3, f_1)[1, x_1, x_2^2x_1x_2x_1, 1], &S(f_4, f_1)[1, x_1, x_2^2x_1x_2^2x_1, 1], &S(f_4, f_3)[1, x_2x_1, x_2^2x_1, 1].
\end{array}
\end{eqnarray*}}
A simple Program in Maple calculate the remainder of these compositions modulo $G$ in $k\langle x_1, x_2\rangle$ with results denoted by $r_1, r_2, \cdots, r_6$, respectively.

Apply Lemma \ref{truncated diamond lemma}, as a necessary condition for $G$ to be Gr\"{o}bner, all the coefficients of $r_i$'s should be zero.  We use Maple to help solve the possible solutions to the system of equations given by setting the coefficients of $r_i$'s to $0$, together with $a_1=b_1=b_2=0$, $c_1\neq0$ and $a_2^3=b_3^2\neq0$. By Lemma \ref{twisting}, we can assume $c_1=1$. Then Maple Solving Command gives $1$ family of solutions. Twist the $c_1$ back and include it as a parameter $p$, we get a family of solutions $\mathcal{F}$ below.

\begin{example}
Let $\mathcal{F}=\{\mathcal{F}(p, q): p\neq0\}$, where $\mathcal{F}(p,q)=k\langle x_1, x_2\rangle/(f_1, f_2, f_3)$
is the $\Z^2$-graded algebra with relations
{\small\begin{eqnarray*}
\begin{split}
&f_1=x_2x_1^2-p^2x_1^2x_2,\\
&f_2=x_2^3x_1-p^3x_1x_2^3,\\
&f_3=x_2^2x_1x_2x_1x_2x_1+px_2x_1x_2^2x_1x_2x_1+p^2x_2x_1x_2x_1x_2^2x_1 -p^4x_1x_2^2x_1x_2x_1x_2\\
    & \hskip10mm -p^5x_1x_2x_1x_2^2x_1x_2-p^6x_1x_2x_1x_2x_1x_2^2 -p^8qx_1^2x_2x_1x_2^3+p^9qx_1^3x_2^4.
\end{split}
\end{eqnarray*}}
\end{example}

\begin{enumerate}
\item
$\mathcal{F}(p, q)\in \mathcal{F}$ is isomorphic to a twisting $\Z^2$-graded algebra of
$\mathcal{F}(1, q)$.
\item If $\mathcal{F}(p, q)\in\mathcal{F}$, then the sequence
$z_1:=x_1^2,\ z_2:=x_2^3,\ z_3:=(x_2x_1+px_1x_2)^3,\ z_4:=(x_2^2x_1+px_2x_1x_2+p^2x_1x_2^2)^2,\
z_5:=x_2^2x_1x_2x_1+p\theta x_2x_1x_2^2x_1+p^3x_1x_2^2x_1x_2+p^4\theta x_1x_2x_1x_2^2$,
where $\theta\in k$ satisfying that $\theta^2-\theta+1=0$,  is a normal sequence in
$\mathcal{F}(p, q)$ such that the factor algebra $\mathcal{F}(p, q)/(z_1, z_2, z_3, z_4, z_5)$ is finite dimensional.
\item Algebras $\mathcal{F}(p, j)\in\mathcal{F}$ are AS-regular of global dimension $5$. They are all strongly noetherian, Auslander regular and Cohen-Macaulay.
\end{enumerate}

The algebra $\mathcal{F}(1, 0)$ is the extremal algebra considered in \cite{FV} which has a Hilbert series
not occurring for enveloping algebras of graded Lie algebras generated in degree one. The normal sequence $z_1, z_2,\cdots, z_6$ for $\mathcal{F}(1, 0)$ was firstly given in \cite[Proposition 4.1]{W}, which turned out to be a normal sequence for $\mathcal{F}(1,q)$ by a straightforward check. Then apply Lemma \ref{twisting}, we obtain the normal sequence for general algebras $\mathcal{F}(p, q)$.

\section{Classification of type (4, 4, 4, 5)}

Now we proceed to discuss the case of 5-dimensional AS-regular $\Z^2$-graded algebras of type $(4, 4, 4, 5)$, these are determined by four defining relations of the (total) degrees 4, 4, 4, 5, respectively. It turns out that there is no such AS-regular algebra in the context of $\Z^2$-grading.

The minimal free resolution of the trivial module $k$ over $A^{\rm gr}$ is of the form
\begin{eqnarray*}
0
\to A^{\rm gr}(-10)
\to A^{\rm gr}(-9)^2
\to A^{\rm gr}(-5)\oplus A^{\rm gr}(-6)^3 \\
\to
A^{\rm gr}(-4)^3\oplus A^{\rm gr}(-5)
&\to A^{\rm gr}(-1)^2
\to A^{\rm gr}
\to k_A
\to 0.
\end{eqnarray*}
So
$$
H_{(R^0)^{\rm gr}}(t)-H_{R^{\rm gr}}(t)=3t^4+\sum_{n\geq5}r_nt^n
$$
and then $G$ has exactly three relations, say $f_1, f_2, f_3$, of total degree $4$. Hence $G_{min}\supseteq\{f_1, f_2, f_3\}$,
the degrees of $f_1, f_3$ are both contained in
$\{(3, 1), (2, 2), (1, 3)\}$ and
$$
\deg(f_2)=(2, 2)\quad \text{with}\quad \lw(f_2)\in \{x_2^2x_1^2, \ x_2x_1x_2x_1,\ x_2x_1^2x_2\}.
$$

\begin{lemma}
It is impossible that $\deg(f_1)=\deg(f_3)=(2, 2)$.
\end{lemma}

\begin{proof}
Otherwise, all the degrees of $f_1, f_2, f_3$ are $(2, 2)$,
by Lemma \ref{minimal resolution}, there's no candidate degree for the fourth relation in $G_{min}$ of total degree $5$. The result follows immediately.
\end{proof}

\begin{lemma}
\label{lemma2-type-(4,4,4,5)}
$\deg(f_1)=(3,1)$ with $\lw(f_1)=x_2x_1^3$ and $\deg(f_3)=(1, 3)$ with $\lw(f_3)=x_2^3x_1$.
\end{lemma}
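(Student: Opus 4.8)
The plan is to first narrow the bidegrees of $f_1$ and $f_3$ to a short list, then eliminate the undesired possibilities by a Hilbert driven Gr\"obner computation, and finally read off the leading words from the domain hypothesis. From the discussion preceding the lemma we already know $\deg(f_1)\in\{(3,1),(2,2)\}$ and $\deg(f_3)\in\{(2,2),(1,3)\}$ (using $\deg_1(f_1)\ge\deg_2(f_1)$ and that no relation is of degree $(m,0)$ or $(0,n)$), and the previous lemma rules out $\deg(f_1)=\deg(f_3)=(2,2)$. So it remains to exclude the two configurations
\[
\text{(a)}\quad \deg(f_1)=(3,1),\ \deg(f_3)=(2,2),\qquad
\text{(b)}\quad \deg(f_1)=(2,2),\ \deg(f_3)=(1,3),
\]
in each of which exactly two of the three quartic relations sit in degree $(2,2)$. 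Since (a) and (b) are interchanged by passing to the switching algebra $A^{\omega}$, which is again an AS-regular domain of the same type, it suffices to refute (a).

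Assume (a). The fourth minimal generator $f_4$ has total degree $5$, and applying Lemma \ref{minimal resolution} to the bigraded refinement of the resolution of $k_A$ over $A^{\rm gr}$ displayed above (so $m=4$ and $p+q=10$) forces, after testing each candidate bidegree $(a,b)$ with $a+b=5$ against the inequalities $u_{\sigma(1)}+u_{\sigma(m)}<p$, $u_{\sigma(2)}+u_{\sigma(m)}\le p$ and their $v$-analogues, the \emph{unique} surviving solution $\deg(f_4)=(3,2)$ with Gorenstein parameter $(p,q)=(6,4)$. This pins down the whole bigraded minimal free resolution, hence the bigraded Hilbert series $H_A(t_1,t_2)=1/P(t_1,t_2)$ with $P$ the alternating sum of the shifts. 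Moreover, by the domain hypothesis (left multiplication by $x_1$ is injective) the relation in degree $(3,1)$ already has leading word $x_2x_1^3$, and the constraint $\lw(f_2)\in\{x_2^2x_1^2,\,x_2x_1x_2x_1,\,x_2x_1^2x_2\}$ together with $\lw(f_2)<_{deglex}\lw(f_3)$ and reducedness leaves only finitely many leading-word assignments for $f_2,f_3$.

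I would then run the Hilbert driven Gr\"obner computation in each of these sub-cases. One first checks $H_{R^3}(t_1,t_2)-H_A(t_1,t_2)$: by Lemma \ref{lemma-hilbert-compare} and Lemma \ref{lemma-new-relation} its lowest term is positive and indeed produces a single relation in degree $(3,2)$, consistent with the resolution, so $-$ unlike the earlier sections $-$ the contradiction will \emph{not} come from a negative Hilbert coefficient. Instead one writes $f_1,f_2,f_3,f_4$ with indeterminate coefficients in the forced bidegrees, computes all compositions of the $f_i$, and imposes via Lemma \ref{truncated diamond lemma} that every composition reduces to zero modulo $G$, which is necessary for $G$ to be a Gr\"obner basis realizing $H_A$. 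The crux, and the main obstacle, is that the resulting polynomial system in the coefficients $-$ augmented by the domain non-degeneracies $-$ is expected to have \emph{no} solution, exactly as the Maple solver returns ``no solutions'' in the existing cases; this is a finite but heavy symbolic elimination. Its unsolvability refutes (a), and hence (b).

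Finally, with (a) and (b) excluded we get $\deg(f_1)=(3,1)$ and $\deg(f_3)=(1,3)$, and the leading words follow from the domain hypothesis alone. Since there is no relation below total degree $4$, $\dim A_{(2,1)}=3$ and $\dim A_{(3,1)}=3$; left multiplication by $x_1$ embeds $A_{(2,1)}$ onto the span of $\{x_1x_2x_1^2,\,x_1^2x_2x_1,\,x_1^3x_2\}$, which is therefore a basis of $A_{(3,1)}$, so $x_2x_1^3$ is a combination of these three words and $\lw(f_1)=x_2x_1^3$. Dually, right multiplication by $x_2$ identifies $A_{(1,2)}$ with the span of $\{x_2^2x_1x_2,\,x_2x_1x_2^2,\,x_1x_2^3\}$ inside the $3$-dimensional space $A_{(1,3)}$, forcing $\lw(f_3)=x_2^3x_1$.
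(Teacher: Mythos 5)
Your outer reduction is fine: passing to the switching algebra to collapse the two configurations into one, and your closing paragraph deriving $\lw(f_1)=x_2x_1^3$ and $\lw(f_3)=x_2^3x_1$ from injectivity of multiplication by $x_1$ (resp.\ $x_2$) plus the dimension count $\dim A_{(3,1)}=\dim A_{(1,3)}=3$, are both correct and consistent with what the paper leaves implicit. The genuine gap is in the middle, which is the entire content of the lemma: your refutation of case (a) is never actually carried out, and your reading of the Hilbert data there is wrong. You claim that in each leading-word sub-case for $(f_2,f_3)$ the lowest term of $H_{R^3}(t_1,t_2)-H_R(t_1,t_2)$ is positive, yields a single new relation of degree $(3,2)$, and hence that ``the contradiction will \emph{not} come from a negative Hilbert coefficient,'' after which you fall back on an unexecuted symbolic elimination that you only \emph{expect} to have no solutions. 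The paper shows the opposite: negative coefficients are precisely the mechanism. Concretely, for $\lw(f_2)=x_2x_1^2x_2,\ \lw(f_3)=x_2x_1x_2x_1$ one computes $H_{R^3}-H_R=-t_1^4t_2^2+t_1^3t_2^3+\cdots$, contradicting $H_{R^3}\geq H_R$ (Lemma \ref{lemma-hilbert-compare}) immediately. For $\lw(f_2)=x_2x_1^2x_2,\ \lw(f_3)=x_2^2x_1^2$ the difference is $t_1^3t_2^2+t_1^2t_2^3+\cdots$, so Lemma \ref{lemma-new-relation} forces \emph{two} further relations in $G$ (degrees $(3,2)$ and $(2,3)$), not one; after identifying $\lw(f_4)=x_2x_1x_2x_1^2$ by inspecting normal words and $\lw(f_5)=x_2x_1x_2^2x_1$ via the composition $S(f_3,f_2)[1,x_2,x_2,1]$ and Lemma \ref{truncated diamond lemma}, one gets $H_{R^5}-H_R=-t_1^4t_2^2+t_1^3t_2^3+\cdots$, again a contradiction. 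For $\lw(f_2)=x_2x_1x_2x_1,\ \lw(f_3)=x_2^2x_1^2$ a single relation $f_4$ of degree $(3,2)$ is forced with $\lw(f_4)=x_2x_1^2x_2x_1$, and then $H_{R^4}-H_R=-t_1^4t_2^2+t_1^3t_2^3+\cdots$. In every sub-case the contradiction is purely combinatorial, at the level of leading words and Hilbert series; no coefficient system ever needs to be solved.

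Beyond the factual error about the sign of the lowest term, the deferral to ``the resulting polynomial system \dots is expected to have no solution'' is not a proof. That elimination is exactly the step you would need to exhibit, you give no computation and no structural reason it must be inconsistent, and in fact it is unnecessary: once the leading words are forced as above, the monomial algebras $R^i$ have too few normal words in degree $(4,2)$ regardless of what the coefficients of the $f_i$ are, which is why the paper's argument closes each sub-case in one or two Hilbert-series steps without ever touching the coefficients.
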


\begin{proof}
If $\deg(f_3)=(2, 2)$, then $\deg(f_1)=(3, 1)$ with $\lw(f_1)=x_2x_1^3$.
By Lemma \ref{minimal resolution}
the only possible minimal free resolution of $k_A$ is of the form:
\begin{equation*}
0 \to
\begin{array}[t]{c}
A\\[-0.5em] \text{\tiny\rm (6, 4)}
\end{array}
\to
\begin{array}[t]{c}
A^2\\[-0.5em] \text{\tiny \rm (6, 3)}\\[-0.7em] \text{\tiny \rm (5, 4)}
\end{array}
\to
\begin{array}[t]{c}
A^3\\[-0.5em] \text{\tiny \rm (3, 2)}\\[-0.7em] \text{\tiny \rm (4, 2)}\\[-0.7em] \text{\tiny \rm (4, 2)}\\[-0.7em] \text{\tiny \rm (3, 3)}
\end{array}
\to
\begin{array}[t]{c}
A^3\\[-0.5em] \text{\tiny \rm (3, 1)}\\[-0.7em] \text{\tiny \rm (2, 2)}\\[-0.7em] \text{\tiny \rm (2, 2)}\\[-0.7em] \text{\tiny \rm (3, 2)}
\end{array}
\to
\begin{array}[t]{c}
A^2\\[-0.5em] \text{\tiny \rm (1, 0)}\\[-0.7em] \text{\tiny \rm (0, 1)}
\end{array}
\to
\begin{array}[t]{c}
A\\[-0.5em] \text{\tiny \rm (0, 0)}
\end{array}
\to k_A
\to 0.
\end{equation*}

Since $A$ is a domain, there are three possibilities:

(a) $\lw(f_2)=x_2x_1^2x_2$, $\lw(f_3)=x_2x_1x_2x_1$.
Then
$$
H_{R^3}(t_1, t_2)-H_{R}(t_1, t_2)=-t_1^4t_2^2+t_1^3t_2^3+
\sum_{|\alpha|\geq7}r_{\alpha}t_1^{\alpha_1}t_2^{\alpha_2}.
$$

(b) $\lw(f_2)=x_2x_1^2x_2$, $\lw(f_3)=x_2^2x_1^2$.
Then
$$
H_{R^3}(t_1, t_2)-H_{R}(t_1, t_2)=t_1^3t_2^2+t_1^2t_2^3+
\sum_{|\alpha|\geq6}r_{\alpha}t_1^{\alpha_1}t_2^{\alpha_2}.
$$
So $G$ has exactly two relations of total degree $5$,
which are $f_4$ of degree $(3, 2)$ and $f_5$ of degree $(2, 3)$.
One easily gets $\lw(f_4)=x_2x_1x_2x_1^2$ by checking all normal
words of degree $(3, 2)$ modulo $G^3$. To find the word
$\lw(f_5)$, one should consider the unique composition
$
S(f_3, f_2)[1, x_2, x_2, 1]
$
of relations in $G^3$ of degree $(2, 3)$, whose remainder modulo $G^3$ is a nonzero scalar multiple of $f_5$ in $k\langle x_1, x_2\rangle$ by Lemma \ref{truncated diamond lemma}. In fact,
the composition is of the form
$
px_2x_1x_2^2x_1+gx_2,
$
where $p\in k$ and  $g\in k\langle x_1, x_2\rangle_{(2, 2)}$ with $\lw(gx_2)<x_2x_1x_2^2x_1$.
So one gets $p\neq0$ and
$\lw(f_5)=x_2x_1x_2^2x_1$. Thus
$$
H_{R^5}(t_1, t_2)-H_R(t_1, t_2)=-t_1^4t_2^2+t_1^3t_2^3
+\sum_{|\alpha|\geq7}r_\alpha t_1^{\alpha_1}t_2^{\alpha_2}.
$$

(c) $\lw(f_2)=x_2x_1x_2x_1$, $\lw(f_3)=x_2^2x_1^2$.
Then
$$
H_{R^3}(t_1, t_2)-H_{R}(t_1, t_2)=t_1^3t_2^2+
\sum_{|\alpha|\geq6}r_{\alpha}t_1^{\alpha_1}t_2^{\alpha_2}.
$$
So $G$ has exactly one element of total degree $5$, which is $f_4$ of degree $(3, 2)$.
Checking all normal words of degree $(3, 2)$ modulo $G^3$,
one gets $\lw(f_4)=x_2x_1^2x_2x_1$ and then
$$
H_{R^4}(t_1, t_2)-H_R(t_1, t_2)=-t_1^4t_2^2+t_1^3t_2^3
+\sum_{|\alpha|\geq7}r_\alpha t_1^{\alpha_1}t_2^{\alpha_2}.
$$

All these three possibilities cause a contradiction and so $\deg(f_3)=(3, 1)$.
The possibility of $\deg(f_1)=(2, 2)$ can be excluded symmetrically and thereby the
result follows.
\end{proof}

\begin{proposition}
\label{type-(4,4,4,5)}
There is no such an algebra of type $(4, 4, 4, 5)$.
\end{proposition}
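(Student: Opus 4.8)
The plan is to push the Hilbert driven Gr\"obner basis computation begun in Lemma \ref{lemma2-type-(4,4,4,5)} to completion and then extract a contradiction from the domain hypothesis. By Lemma \ref{lemma2-type-(4,4,4,5)} we already have $\lw(f_1)=x_2x_1^3$ and $\lw(f_3)=x_2^3x_1$, while $\deg(f_2)=(2,2)$ with $\lw(f_2)\in\{x_2^2x_1^2,\ x_2x_1x_2x_1,\ x_2x_1^2x_2\}$; moreover the free resolution displayed at the head of this section, together with Lemma \ref{minimal resolution}, forces $G_{min}=\{f_1,f_2,f_3,f_4\}$ with $f_4$ of total degree $5$. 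First I would settle $\lw(f_2)$: for each of the three candidates I compute $H_{R^3}(t_1,t_2)-H_R(t_1,t_2)$ and discard any candidate whose difference carries a negative coefficient (forbidden by Lemma \ref{lemma-hilbert-compare}(2)) or a positive coefficient sitting in a bidegree incompatible with a single degree-$5$ fourth generator (via Lemma \ref{lemma-new-relation}). This should leave only the admissible choices of $\lw(f_2)$, and for each surviving choice the leading word $\lw(f_4)$ is then pinned down exactly as in case (b) of Lemma \ref{lemma2-type-(4,4,4,5)}: the relevant overlap $S(f_3,f_2)$ of lower relations must, by Lemma \ref{truncated diamond lemma}, reduce to a nonzero scalar multiple of $f_4$, and inspecting the normal words of the appropriate bidegree modulo $G^3$ isolates $\lw(f_4)$.

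With all leading words of $G_{min}$ fixed, I would continue the Hilbert comparison until $H_{R^i}(t_1,t_2)=H_R(t_1,t_2)$, thereby determining the complete reduced Gr\"obner basis $G$; as in the types $(3,5,5)$ and $(3,4,7)$ I expect one or two further obstructions beyond $G_{min}$, whose bidegrees are read off from $H_{R^i}-H_R$ by Lemma \ref{lemma-new-relation} and whose leading words are fixed by the same normal-word bookkeeping. At this stage every $f_i\in G$ can be written with undetermined scalar coefficients, each as a monic combination of the normal words of its bidegree modulo the already-determined leading words, and I would record the domain constraints, namely that the coefficients of the ``pure'' monomials $x_1^{\deg_1(f_i)}x_2^{\deg_2(f_i)}$ be nonzero (the analogue of $a_2c_4\neq0$ in Proposition \ref{type-(3,5,5)}).

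Next I would assemble the finite list of compositions $S(f_i,f_j)$ coming from the overlap ambiguities of the leading words and impose, via Lemma \ref{truncated diamond lemma}, that each reduces to $0$ modulo $G$; setting all coefficients of these remainders to zero yields a polynomial system in the unknown scalars. Using Lemma \ref{twisting} I would normalise one subleading coefficient to a convenient value so as to cut down the variables, then solve the resulting system together with the domain inequalities. The conclusion to establish is that this system has \emph{no} solution in the algebraically closed field $k$ compatible with the nonvanishing constraints; that emptiness is precisely the statement that no type-$(4,4,4,5)$ algebra exists.

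The hard part will be this last step: certifying that the coefficient system is genuinely inconsistent. Unlike the existence cases, where Maple returns explicit families, here I must rule out \emph{all} solutions, and the domain (nonvanishing) conditions are indispensable, since dropping them would readmit degenerate solutions that do not present honest type-$(4,4,4,5)$ algebras. I would organise the argument by the surviving choices of $\lw(f_2)$ (and, if needed, of $\lw(f_4)$), showing that in each branch either an early composition already forces a spurious relation of too-low degree, giving an immediate Hilbert-series clash through Lemma \ref{lemma-hilbert-compare}, or the full system collapses, after eliminating variables, to an equation demanding that a nonzero scalar vanish. Cross-checking the elimination by hand on the governing equations, as is done elsewhere in the paper, would then secure the non-existence.
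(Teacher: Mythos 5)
Your overall framework (Hilbert driven Gr\"{o}bner computation, then a case analysis on $\lw(f_2)$) matches the paper's, but there is a genuine gap: the actual contradiction is never derived. You defer the non-existence to ``certifying that the coefficient system is genuinely inconsistent,'' which you yourself flag as the hard part and do not carry out; as written, the proposal is a plan, not a proof. Moreover, that hard part is unnecessary: in the paper every branch dies at the level of leading words and Hilbert series, before any relation is ever written with undetermined coefficients. Concretely, after switching so that the fourth minimal relation has degree $(3,2)$, the choices $\lw(f_2)=x_2x_1^2x_2$ and $\lw(f_2)=x_2x_1x_2x_1$ both give $H_{R^3}-H_{R}=t_1^3t_2^2-t_1^2t_2^3+\cdots$, whose negative coefficient already violates Lemma \ref{lemma-hilbert-compare}(2); and the choice $\lw(f_2)=x_2^2x_1^2$ gives $H_{R^3}-H_{R}=2t_1^3t_2^2+\cdots$, so by Lemma \ref{lemma-new-relation} the Gr\"{o}bner basis $G$ acquires exactly two elements $f_4,f_5$ of degree $(3,2)$, whose leading words are forced by inspection of normal words modulo $G^3$ to be $x_2x_1^2x_2x_1$ and $x_2x_1x_2x_1^2$; then $H_{R^5}-H_{R}=-t_1^2t_2^4+\cdots$, again a negative coefficient, contradiction. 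No compositions, no coefficient systems, and no solving of equations enter at any point.

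A second, subsidiary flaw: your filtering criterion for $\lw(f_2)$ --- discard a candidate when a positive coefficient of $H_{R^3}-H_{R}$ sits ``in a bidegree incompatible with a single degree-$5$ fourth generator'' --- misapplies Lemma \ref{lemma-new-relation}. That lemma counts elements of the reduced Gr\"{o}bner basis $G$, not of $G_{min}$; the coefficient $2$ at bidegree $(3,2)$ in the third branch is perfectly compatible with the resolution type, since a Gr\"{o}bner basis element need not be a minimal generator (indeed, throughout the paper $G$ is strictly larger than $G_{min}$, e.g.\ Propositions \ref{type-(3,5,5)} and \ref{type-(3,4,7)}). So under your criterion you would either discard this branch for an invalid reason, or keep it and then face your unexecuted coefficient analysis; either way the decisive step --- pinning down $\lw(f_4)$ and $\lw(f_5)$ and exhibiting the negative coefficient in $H_{R^5}-H_{R}$ --- is missing from the proposal.
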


\begin{proof}
Suppose such an algebra $A$ exists. Then by switching and Lemma \ref{minimal resolution} one can always assume that the minimal free resolution of $k_A$ is of the form:
\begin{equation*}
0 \to
\begin{array}[t]{c}
A\\[-0.5em] \text{\tiny\rm (5, 5)}
\end{array}
\to
\begin{array}[t]{c}
A^2\\[-0.5em] \text{\tiny \rm (5, 4)}\\[-0.7em] \text{\tiny \rm (4, 5)}
\end{array}
\to
\begin{array}[t]{c}
A^3\\[-0.5em] \text{\tiny \rm (2, 3)}\\[-0.7em] \text{\tiny \rm (4, 2)}\\[-0.7em] \text{\tiny \rm (3, 3)}\\[-0.7em] \text{\tiny \rm (2, 4)}
\end{array}
\to
\begin{array}[t]{c}
A^3\\[-0.5em] \text{\tiny \rm (3, 1)}\\[-0.7em] \text{\tiny \rm (2, 2)}\\[-0.7em] \text{\tiny \rm (1, 3)}\\[-0.7em] \text{\tiny \rm (3, 2)}
\end{array}
\to
\begin{array}[t]{c}
A^2\\[-0.5em]  \text{\tiny \rm (1, 0)}\\[-0.7em] \text{\tiny \rm (0, 1)}
\end{array}
\to
\begin{array}[t]{c}
A\\[-0.5em] \text{\tiny \rm (0, 0)}
\end{array}
\to k_A
\to 0.
\end{equation*}
If $\lw(f_2)=x_2x_1^2x_2$ or $\lw(f_2)=x_2x_1x_2x_1$, then one gets a contradiction
$$
H_{R^3}(t_1, t_2)-H_{R}(t_1, t_2)=t_1^3t_2^2-t_1^2t_2^3+
\sum_{|\alpha|\geq6}r_{\alpha}t_1^{\alpha_1}t_2^{\alpha_2}.
$$
So it remains to exclude the possibility of $\lw(f_2)=x_2^2x_1^2$. Otherwise,
$$
H_{R^3}(t_1, t_2)-H_{R}(t_1, t_2)=2t_1^3t_2^2+
\sum_{|\alpha|\geq6}r_{\alpha}t_1^{\alpha_1}t_2^{\alpha_2}
$$
and then $G$ contains exactly two elements, which are $f_4,\ f_5$ of degree $(3, 2)$. By checking all normal words of degree $(3, 2)$ modulo $G^3$, one gets
$\lw(f_4)=x_2x_1^2x_2x_1,\ \lw(f_5)=x_2x_1x_2x_1^2$ and hence
$$
H_{R^5}(t_1, t_2)-H_{R}(t_1, t_2)=-t_1^2t_2^4+
\sum_{|\alpha|\geq7}r_{\alpha}t_1^{\alpha_1}t_2^{\alpha_2},
$$
which is impossible.  We complete the proof.
\end{proof}

\section{Classification of type (4, 4, 4, 5, 5)}

In this section, we devote to classify 5-dimensional AS-regular $\Z^2$-graded algebras of type $(4, 4, 4, 5, 5)$, these are determined by five defining relations of the (total) degrees 4, 4, 4, 5, 5, respectively. We get a class of AS-regular algebras denoted by $\mathcal{G}$, and hence give an answer to the question of \cite{FV}.

The minimal free resolution of the trivial module $k$ over $A^{\rm gr}$ is of the form
\begin{eqnarray*}
0
\to A^{\rm gr}(-10)
\to A^{\rm gr}(-9)^2
\to A^{\rm gr}(-5)^2  \oplus A^{\rm gr}(-6)^3\hskip27mm \\
\to
A^{\rm gr}(-4)^3\oplus A^{\rm gr}(-5)^2
\to A^{\rm gr}(-1)^2
\to A^{\rm gr}
\to k_A
\to 0.
\end{eqnarray*}
So
$$
H_{(R^0)^{\rm gr}}(t)-H_{R^{\rm gr}}(t)=3t^4+\sum_{n\geq5}r_nt^n,
$$
and then $G$ has exactly $3$ relations of total degree $4$,
which are $f_1, f_2, f_3$. Hence
$G_{min}\supseteq\{f_1, f_2, f_3\}$, the degrees of $f_1, f_3$ are both contained in $\{(3, 1), (2, 2), (1, 3)\}$ and
$$
\deg(f_2)=(2, 2)\quad \text{with}\quad \lw(f_2)\in \{x_2^2x_1^2, x_2x_1x_2x_1, x_2x_1^2x_2\}.
$$

\begin{lemma}
It is impossible that $\deg(f_1)=\deg(f_3)=(2, 2)$.
\end{lemma}

\begin{proof}
If all the degrees of $f_1, f_2, f_3$ are $(2, 2)$,
Lemma \ref{minimal resolution} concludes that there's no candidate degree for the
fourth relation in $G_{min}$ of total degree $5$. So the result follows.
\end{proof}

\begin{lemma}
$\deg(f_1)=(3, 1)$ with $\lw(f_1)=x_2x_1^3$ and $\deg(f_3)=(1, 3)$ with $\lw(f_3)=x_2^3x_1$.
\end{lemma}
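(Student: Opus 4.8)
The plan is to mimic the treatment of the preceding type $(4,4,4,5)$: use the leading-word ordering and the previous lemma to isolate a single forbidden degree configuration, pin down the unique admissible resolution by Lemma \ref{minimal resolution}, and then kill each combinatorial possibility for the two degree-$(2,2)$ relations by producing a negative coefficient in a Hilbert-series difference.

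First I would extract what the ordering forces. Since $\lw(f_1)<_{deglex}\lw(f_2)<_{deglex}\lw(f_3)$ with $\deg(f_2)=(2,2)$, and since $(3,1)<_{deglex}(2,2)<_{deglex}(1,3)$ on total-degree-$4$ indices, we must have $\deg(f_1)\in\{(3,1),(2,2)\}$ and $\deg(f_3)\in\{(2,2),(1,3)\}$. By the previous lemma they are not both $(2,2)$, so it suffices to exclude $\deg(f_3)=(2,2)$; the possibility $\deg(f_1)=(2,2)$ then follows by passing to the switching algebra $A^{\omega}$, in which the degree-$(1,3)$ relation becomes a degree-$(3,1)$ one and the configuration turns into the excluded case. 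Assuming $\deg(f_3)=(2,2)$ forces $\deg(f_1)=(3,1)$, and then $\lw(f_1)=x_2x_1^3$: otherwise every monomial of $f_1$ begins with $x_1$, so $f_1=x_1g$ with $\deg g=(2,1)$, and since $A$ is a domain $g$ would be a nonzero relation of total degree $3$, contradicting that the minimal relations start in total degree $4$. The mirror argument (factoring $x_2$ on the right) will later give $\lw(f_3)=x_2^3x_1$ once $\deg(f_3)=(1,3)$ is established.

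Next I would determine the forced resolution. With relation degrees $(3,1),(2,2),(2,2)$ and two relations of total degree $5$, write the degree-$5$ relations as $(a_i,5-a_i)$ and use Lemma \ref{minimal resolution}(1)--(2) together with $p+q=10$. A short elimination shows $a_i=1$ (a $(1,4)$) is incompatible with the first-degree inequality, $a_i=2$ (a $(2,3)$) is incompatible with the second-degree inequality, and $a_i=4$ (a $(4,1)$) violates $u_{\sigma(1)}+u_{\sigma(m)}<p$; hence both degree-$5$ relations have degree $(3,2)$, and the inequalities then pin down $p=6$, $q=4$. This yields the unique candidate minimal free resolution with homological-degree-$2$ term $A_{(3,1)}\oplus A_{(2,2)}^2\oplus A_{(3,2)}^2$ and Gorenstein parameter $(6,4)$, from which $H_A(t_1,t_2)=H_R(t_1,t_2)$ is computed explicitly.

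Finally I would run the three combinatorial cases. Since $\deg(f_2)=\deg(f_3)=(2,2)$ and $\lw(f_2)<_{lex}\lw(f_3)$ among $x_2^2x_1^2>_{lex}x_2x_1x_2x_1>_{lex}x_2x_1^2x_2$, there are exactly three admissible pairs $(\lw(f_2),\lw(f_3))$, matching cases (a),(b),(c) of the $(4,4,4,5)$ argument. For each I would compute $H_{R^3}(t_1,t_2)-H_R(t_1,t_2)$ using $\lw(f_1)=x_2x_1^3$; by Lemma \ref{lemma-hilbert-compare} this difference must have nonnegative coefficients. In the case where a negative coefficient already appears in total degree $6$ the contradiction is immediate; in the remaining cases the leading positive terms instead signal, via Lemma \ref{lemma-new-relation}, new obstructions of degree $(3,2)$, and after introducing $f_4,f_5$ (whose leading words are fixed by reducing the relevant compositions $S(f_i,f_j)$ modulo $G^3$ through Lemma \ref{truncated diamond lemma}) a negative coefficient surfaces in $H_{R^4}-H_R$ or $H_{R^5}-H_R$. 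Each case thus contradicts $H_{R^i}\geq H_R$, so $\deg(f_3)=(1,3)$ with $\lw(f_3)=x_2^3x_1$, and the switching argument gives $\deg(f_1)=(3,1)$ with $\lw(f_1)=x_2x_1^3$. I expect the main obstacle to be precisely the bookkeeping in the two non-immediate cases: one must correctly predict the degree-$(3,2)$ obstructions, identify $\lw(f_4),\lw(f_5)$ among the normal words of that degree, and carry the Hilbert-series difference far enough to expose the forbidden negative coefficient, checking all the while that the two distinct degree-$5$ obstructions do not accidentally cancel the discrepancy; the conceptual steps are routine once the resolution is fixed, and the risk is purely computational.
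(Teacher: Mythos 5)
Your proposal is correct and follows essentially the same route as the paper: assume $\deg(f_3)=(2,2)$, deduce $\deg(f_1)=(3,1)$ with $\lw(f_1)=x_2x_1^3$, use Lemma \ref{minimal resolution} to force the unique resolution with two degree-$(3,2)$ relations and Gorenstein parameter $(6,4)$, and then obtain the three-case Hilbert-series contradictions exactly as in the type $(4,4,4,5)$ argument, disposing of $\deg(f_1)=(2,2)$ by switching. The paper's own proof is precisely this, with the case analysis delegated verbatim to the proof of Lemma \ref{lemma2-type-(4,4,4,5)}; your additional spelled-out details (the domain/factoring argument for the leading words and the inequality elimination pinning down $(p,q)=(6,4)$) are exactly what the paper leaves implicit.
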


\begin{proof}
If $\deg(f_3)=(2, 2)$, then $\deg(f_1)=(3, 1)$ with $\lw(f_1)=x_2x_1^3$.
By Lemma \ref{minimal resolution}, the only possible
minimal free resolution of $k_A$ is of the form:
\begin{equation*}
0 \to
\begin{array}[t]{c}
A\\[-0.5em] \text{\tiny\rm (6, 4)}
\end{array}
\to
\begin{array}[t]{c}
A^2\\[-0.5em] \text{\tiny \rm (6, 3)}\\[-0.7em] \text{\tiny \rm (5, 4)}
\end{array}
\to
\begin{array}[t]{c}
A^3\\[-0.5em] \text{\tiny \rm (3, 2)}\\[-0.7em] \text{\tiny \rm (3, 2)}\\[-0.7em]
\text{\tiny \rm (4, 2)}\\[-0.7em] \text{\tiny \rm (4, 2)}\\[-0.7em] \text{\tiny \rm (3, 3)}
\end{array}
\to
\begin{array}[t]{c}
A^3\\[-0.5em] \text{\tiny \rm (3, 1)}\\[-0.7em] \text{\tiny \rm (2, 2)}\\[-0.7em]
\text{\tiny \rm (2, 2)}\\[-0.7em] \text{\tiny \rm (3, 2)}\\[-0.7em] \text{\tiny \rm (3, 2)}
\end{array}
\to
\begin{array}[t]{c}
A^2\\[-0.5em] \text{\tiny \rm (1, 0)}\\[-0.7em] \text{\tiny \rm (0, 1)}
\end{array}
\to
\begin{array}[t]{c}
A\\[-0.5em] \text{\tiny \rm (0, 0)}
\end{array}
\to k_A
\to 0.
\end{equation*}
The result follows immediately from the same discussion as that
in the proof of Lemma \ref{lemma2-type-(4,4,4,5)}.
\end{proof}

\begin{proposition}
\label{resolution-type-(4,4,4,5,5)}
The minimal free resolution of $k_A$ is of the form
\begin{equation*}
0 \to
\begin{array}[t]{c}
A\\[-0.5em] \text{\tiny\rm (5, 5)}
\end{array}
\to
\begin{array}[t]{c}
A^2\\[-0.5em] \text{\tiny \rm (5, 4)}\\ [-0.7em]\text{\tiny \rm  (4, 5)}
\end{array}
\to
\begin{array}[t]{c}
A^3\\[-0.5em] \text{\tiny \rm (3, 2)}\\[-0.7em] \text{\tiny \rm (2, 3)}\\[-0.7em]
   \text{\tiny \rm (4, 2)}\\ [-0.7em]\text{\tiny \rm (3, 3)}\\[-0.7em] \text{\tiny \rm (2, 4)}
\end{array}
\to
\begin{array}[t]{c}
A^3\\[-0.5em] \text{\tiny \rm (3, 1)}\\ [-0.7em] \text{\tiny \rm  (2, 2)}\\[-0.7em]
   \text{\tiny \rm (1, 3)}\\ [-0.7em] \text{\tiny \rm  (3, 2)}\\[-0.7em] \text{\tiny \rm (2, 3)}
\end{array}
\to
\begin{array}[t]{c}
A^2\\[-0.5em] \text{\tiny \rm (1, 0)}\\ [-0.7em] \text{\tiny \rm  (0, 1)}
\end{array}
\to
\begin{array}[t]{c}
A\\[-0.5em] \text{\tiny \rm (0, 0)}
\end{array}
\to k_A
\to 0.
\end{equation*}
\end{proposition}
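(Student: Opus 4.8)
The plan is to determine the full $\Z^2$-graded resolution by first fixing the Gorenstein parameter and the bidegrees of the two degree-five relations, and then reading off the remaining terms from the self-dual shape encoded in Lemma \ref{minimal resolution}. By the preceding lemmas we know $G_{min}=\{f_1,f_2,f_3,f_4,f_5\}$ with $\deg(f_1)=(3,1)$, $\deg(f_2)=(2,2)$, $\deg(f_3)=(1,3)$, while $f_4,f_5$ each have total degree $5$. Matching the displayed $\Z$-graded resolution over $A^{\rm gr}$ against the shape of Lemma \ref{minimal resolution} identifies $m=5$, forces $F_1$ to be free on $x_1,x_2$ in bidegrees $(1,0),(0,1)$, and (via Gorenstein self-duality) writes $F_3$ on generators $\gamma-\deg(f_i)$, $F_4$ on $(p,q-1),(p-1,q)$, and $F_5$ on $(p,q)$, where $\gamma=(p,q)$ and the top term $A^{\rm gr}(-10)$ gives $|\gamma|=p+q=10$. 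Writing $\deg(f_4)=(a,5-a)$ and $\deg(f_5)=(b,5-b)$, the convention that no relation has a degree of the form $(\ast,0)$ or $(0,\ast)$ gives $1\le a,b\le 4$, so the two bidegrees lie a priori in $\{(4,1),(3,2),(2,3),(1,4)\}$.

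First I would run Lemma \ref{minimal resolution} on the $F_2$-degrees $\{(3,1),(2,2),(1,3),(a,5-a),(b,5-b)\}$. Since $f_3$ contributes a first coordinate $1$ and $f_1$ a second coordinate $1$, we get $u_{\sigma(1)}=v_{\tau(1)}=1$, so the strict inequalities of the lemma read $1+u_{\sigma(m)}<p$ and $1+v_{\tau(m)}<q$. Adding these and using $p+q=10$ gives $u_{\sigma(m)}+v_{\tau(m)}\le 7$, while $f_1,f_3$ already force $u_{\sigma(m)}\ge 3$ and $v_{\tau(m)}\ge 3$. If $u_{\sigma(m)}\ge 4$ then $p\ge 6$, hence $q\le 4$ and $v_{\tau(m)}\le 2$, contradicting $v_{\tau(m)}\ge 3$; the symmetric argument excludes $v_{\tau(m)}\ge 4$. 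Thus $u_{\sigma(m)}=v_{\tau(m)}=3$, which forces $p\ge 5$ and $q\ge 5$, so $\gamma=(5,5)$; moreover $a,b\le 3$ and $5-a,5-b\le 3$, i.e. $a,b\in\{2,3\}$ and the degree-five relations live in bidegrees $\{(3,2),(2,3)\}$.

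The genuinely delicate point, and the step I expect to be the main obstacle, is the multiplicity: Lemma \ref{minimal resolution} does not by itself exclude both relations sitting in the same bidegree. To break the tie I would compute a few low Hilbert coefficients from the now almost-determined resolution. Pascal's recursion gives $\dim A_{(i,j)}=\binom{i+j}{i}$ through total degree $3$, and since $f_2$ is the unique relation in bidegree $(2,2)$ one has $\dim A_{(2,2)}=\binom{4}{2}-1=5$. Let $s,t\ge 0$ count the degree-five relations in bidegrees $(3,2)$ and $(2,3)$, so $s+t=2$; because the $F_3$-generators are $\gamma$ minus the $F_2$-generators, the alternating sum over the resolution in total degree $5$ yields $\dim A_{(2,3)}=6-t+s$ and $\dim A_{(3,2)}=6-s+t$ (the base value $6$ coming from the degree-$\le 4$ data). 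Now $A$ is a domain, so right multiplication by $x_2$ embeds $A_{(2,2)}$ into $A_{(2,3)}$ and right multiplication by $x_1$ embeds $A_{(2,2)}$ into $A_{(3,2)}$; hence $\dim A_{(2,3)}\ge 5$ and $\dim A_{(3,2)}\ge 5$, which together give $|s-t|\le 1$ and therefore $s=t=1$. So there is exactly one degree-five relation in each of $(3,2)$ and $(2,3)$.

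Finally I would assemble the resolution. With the $F_2$-generators pinned down in bidegrees $(3,1),(2,2),(1,3),(3,2),(2,3)$ and $\gamma=(5,5)$, the self-dual shape of Lemma \ref{minimal resolution} fixes the rest: $F_3$ is free on $\gamma-\deg(f_i)$, namely $(2,4),(3,3),(4,2),(2,3),(3,2)$; $F_4$ is free on generators in bidegrees $(5,4)$ and $(4,5)$; and $F_5$ is free on a generator in bidegree $(5,5)$. This is exactly the stated resolution. The degree and duality bookkeeping does almost all the work and narrows everything down to a single one-parameter ambiguity in the multiplicities; it is only the domain hypothesis, fed through the injectivity of multiplication by $x_1$ and $x_2$, that forces the two degree-five relations into distinct bidegrees.
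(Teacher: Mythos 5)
Your proof is correct, and its crucial step is genuinely different from the paper's. Both you and the authors use Lemma \ref{minimal resolution} (together with $|\gamma|=10$ from the $\Z$-graded resolution) to force $\gamma=(5,5)$ and to confine the two total-degree-$5$ relations to the bidegrees $(3,2)$ and $(2,3)$. The paper, however, settles the multiplicity question --- whether both of these relations could sit in the \emph{same} bidegree --- by its Hilbert-driven Gr\"{o}bner machinery: it appeals to ``a similar but simpler discussion'' as in the proof of Proposition \ref{type-(4,4,4,5)}, i.e.\ a case analysis on the possible leading word of $f_2$, enumeration of normal words, and the detection of a negative coefficient in $H_{R^i}(t_1,t_2)-H_R(t_1,t_2)$, contradicting Lemma \ref{lemma-hilbert-compare}. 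You avoid all of that: from the already pinned-down resolution shape you read off $\dim A_{(3,2)}=6-s+t$ and $\dim A_{(2,3)}=6-t+s$ (these coefficients are correct; e.g.\ for the algebras $\mathcal{G}$ one indeed finds $\dim A_{(3,2)}=6$), and then the domain hypothesis, through injectivity of right multiplication by $x_1$ and $x_2$ on the $5$-dimensional space $A_{(2,2)}$, gives $\dim A_{(3,2)},\ \dim A_{(2,3)}\ge 5$, hence $|s-t|\le 1$ and $s=t=1$. Your route is more elementary and self-contained: no admissible order, no normal-word bookkeeping, no reference to the type-$(4,4,4,5)$ argument, and it isolates exactly where the domain hypothesis enters. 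What the paper's route buys is economy within its larger program: essentially the same Gr\"{o}bner--Hilbert case analysis must be run a few lines later anyway, to determine $\lw(f_2)$ and the explicit relations in Proposition \ref{type-(4,4,4,5,5)}, so the authors obtain the bidegree statement as a by-product of machinery they need regardless. Two minor quibbles that do not affect correctness: your bound $u_{\sigma(m)}+v_{\tau(m)}\le 7$ can be sharpened to $\le 6$ by applying integrality to each strict inequality separately (though the weaker bound suffices for your argument), and the count $\dim A_{(2,2)}=\binom{4}{2}-1=5$ tacitly uses that $\mathfrak{a}$ vanishes in total degrees $<4$, which indeed holds since $G_{min}$ starts in total degree $4$.
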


\begin{proof}
By a similar but simpler discussion as that in the proof of Proposition \ref{type-(4,4,4,5)},
one sees that, other than $f_1, f_2, f_3$, the remaining two relations in $G_{min}$ of total degree $5$ are of different degrees. The result then follows from Lemma \ref{minimal resolution}.
\end{proof}

\begin{lemma}
$\lw(f_2)=x_2^2x_1^2$.
\end{lemma}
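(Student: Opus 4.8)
The plan is to pin down $\lw(f_2)$ by the Hilbert-driven comparison encoded in Lemma~\ref{lemma-hilbert-compare} and Lemma~\ref{lemma-new-relation}. From the preceding lemmas of this section we already have $\lw(f_1)=x_2x_1^3$ and $\lw(f_3)=x_2^3x_1$, and $\lw(f_2)$ is one of $x_2^2x_1^2$, $x_2x_1x_2x_1$, $x_2x_1^2x_2$; the task is to discard the latter two. For a candidate $w$ I set $R^3=k\langle x_1,x_2\rangle/(x_2x_1^3,\,w,\,x_2^3x_1)$ and compute $H_{R^3}(t_1,t_2)$ bidegree by bidegree, counting the words that contain none of these three monomials as a factor. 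I then compare with $H_R=H_A$, whose coefficients come from the resolution of Proposition~\ref{resolution-type-(4,4,4,5,5)} through $H_A(\mathbf t)^{-1}=\sum_i(-1)^i\sum_j\mathbf t^{\beta_{ij}}$; in particular a short computation gives $\dim A_{(3,2)}=\dim A_{(2,3)}=6$ and $\dim A_{(4,2)}=7$.

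To discard $w=x_2x_1x_2x_1$, I examine the two degree-$5$ slots that the resolution reserves for minimal relations. Counting normal words gives $\dim R^3_{(3,2)}=7$ but $\dim R^3_{(2,3)}=6=\dim A_{(2,3)}$, so $H_{R^3}-H_R$ carries the term $t_1^3t_2^2$ yet vanishes identically at bidegree $(2,3)$. By the chain $H_{R^3}\ge H_{R^4}\ge\cdots\ge H_R$ of Lemma~\ref{lemma-hilbert-compare}(2), equality at $(2,3)$ forbids any $\lw(f_j)$ of degree $(2,3)$, hence any relation of that degree, in $G$. Since $f_1,f_2,f_3$ are of degrees $(3,1),(2,2),(1,3)$, this contradicts the minimal generator of degree $(2,3)$ demanded by Proposition~\ref{resolution-type-(4,4,4,5,5)}.

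To discard $w=x_2x_1^2x_2$, the degree-$5$ test is inconclusive: both $(3,2)$ and $(2,3)$ yield difference $1$, consistent with the resolution. So I push to total degree $6$. At bidegree $(4,2)$ the monomial $x_2^3x_1$ cannot occur, and a direct enumeration of the words avoiding $x_2x_1^3$ and $x_2x_1^2x_2$ leaves only $5$ survivors, i.e. $\dim R^3_{(4,2)}=5<7=\dim A_{(4,2)}$. Thus $H_{R^3}-H_R$ has a strictly negative coefficient at $t_1^4t_2^2$, contradicting the monotonicity $H_{R^3}\ge H_R$ of Lemma~\ref{lemma-hilbert-compare}(2). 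Hence $w=x_2x_1^2x_2$ is impossible as well, leaving $\lw(f_2)=x_2^2x_1^2$.

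The only real work is bookkeeping: accurately enumerating the normal words of bidegrees $(3,2)$, $(2,3)$ and $(4,2)$ for each candidate, being careful about overlaps among the three forbidden factors so that no word is counted twice, and extracting the handful of coefficients of $H_A$ from the resolution. Once those finite counts are in hand, both exclusions are immediate. I expect the subtle point to be that the two bad candidates fail for different reasons---$x_2x_1x_2x_1$ through a missing obstruction at $(2,3)$ and $x_2x_1^2x_2$ through a negative Hilbert coefficient at $(4,2)$---so the argument is handled in two separate cases rather than by a single uniform degree bound.
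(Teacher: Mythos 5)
Your proof follows exactly the paper's route: a two-case, Hilbert-driven exclusion in which $x_2x_1x_2x_1$ is killed because $G$ could then contain no relation in a degree where Proposition~\ref{resolution-type-(4,4,4,5,5)} demands a minimal generator, and $x_2x_1^2x_2$ is killed by a negative coefficient of $H_{R^3}-H_{R}$ at bidegree $(4,2)$. The logical skeleton is sound, and the two facts your contradictions actually rest on are true: for $w=x_2x_1x_2x_1$ one indeed has $\dim R^3_{(2,3)}=6=\dim A_{(2,3)}$, and for $w=x_2x_1^2x_2$ one indeed has $\dim R^3_{(4,2)}<7=\dim A_{(4,2)}$. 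Your values $\dim A_{(3,2)}=\dim A_{(2,3)}=6$, $\dim A_{(4,2)}=7$ extracted from the resolution are also correct.

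However, the bookkeeping that you yourself call ``the only real work'' contains several errors. (i) For $w=x_2x_1x_2x_1$ you claim $\dim R^3_{(3,2)}=7$; it is $6$, because you over-counted $x_1x_2x_1x_2x_1$, which ends in the forbidden factor $x_2x_1x_2x_1$. Consequently $H_{R^3}-H_{R}$ vanishes in all of total degree $5$ (this is the paper's statement that the difference equals $t_1^3t_2^3+\cdots$), which only strengthens your contradiction. (ii) For $w=x_2x_1^2x_2$ you claim both degree-$5$ differences equal $1$; in fact both are $0$: at $(3,2)$ the normal words are $x_1^3x_2^2$, $x_1^2x_2x_1x_2$, $x_1^2x_2^2x_1$, $x_1x_2x_1x_2x_1$, $x_1x_2^2x_1^2$, $x_2x_1x_2x_1^2$, six in number, and similarly at $(2,3)$. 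So the degree-$5$ test is \emph{not} inconclusive here: it already yields the identical contradiction (no relation of degree $(3,2)$ or $(2,3)$ can occur in $G$, yet the resolution demands minimal generators in both degrees), and the passage to degree $6$ is unnecessary; the two candidates could be excluded for the same reason. (iii) Finally, $\dim R^3_{(4,2)}=6$, not $5$ (the normal words are $x_1x_2x_1x_2x_1^2$, $x_1^2x_2^2x_1^2$, $x_1^2x_2x_1x_2x_1$, $x_1^3x_2^2x_1$, $x_1^3x_2x_1x_2$, $x_1^4x_2^2$); since $6<7$, the sign of the discrepancy, hence your conclusion, survives. In short, your argument is the paper's argument and reaches the right conclusions, but the Hilbert data you report are wrong in three places; the correct differences are $H_{R^3}-H_{R}=t_1^3t_2^3+\cdots$ for $x_2x_1x_2x_1$ and $H_{R^3}-H_{R}=-t_1^4t_2^2+\cdots$ for $x_2x_1^2x_2$, exactly as stated in the paper.
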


\begin{proof}
If $\lw(f_2)=x_2x_1^2x_2$ (resp. $\lw(f_2)=x_2x_1x_2x_1$),
then one will get a contradiction from
\begin{eqnarray*}
H_{R^3}(t_1, t_2)-H_{R}(t_1, t_2)=-t_1^4t_2^2
+\sum_{|\alpha|\geq7}r_{\alpha}t_1^{\alpha_1}t_2^{\alpha_2}
\quad (\text{resp.}\
H_{R^3}(t_1, t_2)-H_{R}(t_1, t_2)=t_1^3t_2^3
+\sum_{|\alpha|\geq7}r_{\alpha}t_1^{\alpha_1}t_2^{\alpha_2}).
\end{eqnarray*}
In fact, the later tells that there's no relation in $G$ of total degree $5$.
Now we reach the only possibility $\lw(f_2)=x_2^2x_1^2$.
\end{proof}

\begin{proposition}
\label{type-(4,4,4,5,5)}
$G=G_{min}=\{f_1, f_2, f_3, f_4, f_5\}$ with
{\small\begin{eqnarray*}
\begin{split}
&f_1=x_2x_1^3+a_1x_1x_2x_1^2+a_2x_1^2x_2x_1+a_3x_1^3x_2,\\
&f_2=x_2^2x_1^2+b_1x_2x_1x_2x_1+b_2x_2x_1^2x_2+b_3x_1x_2^2x_1+b_4x_1x_2x_1x_2 +b_5x_1^2x_2^2,\\
&f_3=x_2^3x_1+c_1x_2^2x_1x_2+c_2x_2x_1x_2^2+c_3x_1x_2^3,\\
&f_4=x_2x_1x_2x_1^2+d_1x_2x_1^2x_2x_1+d_2x_1x_2x_1x_2x_1+d_3x_1x_2x_1^2x_2 +d_4x_1^2x_2^2x_1 + d_5x_1^2x_2x_1x_2+d_6x_1^3x_2^2,\\
&f_5=x_2^2x_1x_2x_1+e_1x_2x_1x_2^2x_1+e_2x_2x_1x_2x_1x_2+e_3x_2x_1^2x_2^2 +e_4x_1x_2^2x_1x_2 + e_5x_1x_2x_1x_2^2+e_6x_1^2x_2^3,
\end{split}
\end{eqnarray*}}
where all $a_i, b_i, c_i, d_i, e_i\in k$. Moreover, one has $a_3 c_3\neq0$ and $a_1=b_1=c_1$.
\end{proposition}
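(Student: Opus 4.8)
The plan is to carry the Hilbert driven Gr\"obner basis computation begun in the preceding lemmas through to its end, reading off the relations degree by degree against the target series $H_R=H_A$ fixed by the resolution of Proposition \ref{resolution-type-(4,4,4,5,5)}, and then to extract the two side conditions $a_3c_3\neq0$ and $a_1=b_1=c_1$ from the domain hypothesis and from the \emph{minimality} of the degree five relations, respectively. The genuinely new feature of this type, distinguishing it from the $(3,5,5)$ and $(3,4,7)$ cases, is that $G=G_{min}$: no Gr\"obner element arises as the remainder of a composition, and this is precisely what will force $a_1=b_1=c_1$.

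First I would record what is already known, namely $\lw(f_1)=x_2x_1^3$, $\lw(f_2)=x_2^2x_1^2$, $\lw(f_3)=x_2^3x_1$, and $G_{min}\supseteq\{f_1,f_2,f_3\}$. Feeding these three leading words into $R^3$ and comparing with $H_R$, I expect
$$
H_{R^3}(t_1,t_2)-H_R(t_1,t_2)=t_1^3t_2^2+t_1^2t_2^3+\sum_{|\alpha|\geq6}r_\alpha t_1^{\alpha_1}t_2^{\alpha_2},
$$
so that Lemma \ref{lemma-new-relation} yields exactly one further relation in each of the degrees $(3,2)$ and $(2,3)$, namely $f_4$ and $f_5$, whence $G_{min}=\{f_1,\dots,f_5\}$. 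Next I would pin down the leading words exactly as in the earlier leading word lemmas: list the normal words of degree $(3,2)$ (resp.\ $(2,3)$) modulo $G^3$ and discard every candidate except $x_2x_1x_2x_1^2$ (resp.\ $x_2^2x_1x_2x_1$), since any other choice forces a negative coefficient into $H_{R^4}-H_R$ (resp.\ $H_{R^5}-H_R$), contradicting Lemma \ref{lemma-hilbert-compare}. The five resulting leading words form an antichain, so $H_{R^5}$ is readily computed; checking $H_{R^5}=H_R$ gives $G=G^5=G_{min}$ by Lemma \ref{lemma-hilbert-compare}(3), and reading the normal words of each degree modulo $G$ produces the displayed forms with undetermined coefficients.

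For $a_3c_3\neq0$ I would argue directly from the domain hypothesis. Setting $a_3=0$ factors $f_1=(x_2x_1^2+a_1x_1x_2x_1+a_2x_1^2x_2)x_1$, and setting $c_3=0$ factors $f_3=x_2(x_2^2x_1+c_1x_2x_1x_2+c_2x_1x_2^2)$. In each case the bracketed factor has degree $(2,1)$ or $(1,2)$, which lies below all relations, so the relevant graded piece of $A$ is spanned freely by its words and the factor is a nonzero element of $A$; we thus obtain a product of two nonzero elements that vanishes in $A$, contradicting that $A$ is a domain.

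The subtle point is the chain $a_1=b_1=c_1$, where I would exploit that $f_4,f_5$ are minimal generators. The only composition of degree $\leq(3,2)$ is $S(f_2,f_1)[1,x_1,x_2,1]=f_2x_1-x_2f_1$, and the only one of degree $\leq(2,3)$ is $S(f_3,f_2)[1,x_1,x_2,1]=f_3x_1-x_2f_2$. If $S(f_2,f_1)$ failed to be trivial modulo $G^3$, its normal form would be a nonzero element of $(G^3)$ whose only admissible leading word is the unique new obstruction $x_2x_1x_2x_1^2=\lw(f_4)$; this would place $f_4$ inside $(f_1,f_2,f_3)$ and destroy its minimality, contradicting Proposition \ref{resolution-type-(4,4,4,5,5)}. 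Equivalently, minimality of $f_4$ forces $\dim_k A^3_{(3,2)}=\dim_k R^3_{(3,2)}$, which by Lemma \ref{truncated diamond lemma} is exactly the triviality of $S(f_2,f_1)$ modulo $G^3$; a short reduction shows the coefficient of $x_2x_1x_2x_1^2$ in $f_2x_1-x_2f_1$ equals $b_1-a_1$, so triviality gives $a_1=b_1$. The symmetric argument applied to $S(f_3,f_2)$, whose coefficient of $x_2^2x_1x_2x_1$ is $c_1-b_1$, gives $b_1=c_1$. I expect the bookkeeping that simultaneously confirms $\lw(f_4),\lw(f_5)$ and the equality $H_{R^5}=H_R$ to be the most laborious part, while the conceptual heart is this translation of ``$f_4,f_5$ are minimal'' into composition triviality modulo $G^3$, which yields $a_1=b_1=c_1$.
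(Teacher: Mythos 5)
Your skeleton is the paper's own proof: the expansion of $H_{R^3}-H_R$ forcing exactly one new relation in each of the degrees $(3,2)$ and $(2,3)$, the identification $G=G^5=G_{min}$ once the two new leading words are fixed, the factorization argument for $a_3c_3\neq0$, and the derivation of $a_1=b_1=c_1$ from minimality of $f_4,f_5$ via the compositions $S(f_2,f_1)[1,x_1,x_2,1]$ and $S(f_3,f_2)[1,x_1,x_2,1]$ — including the bookkeeping that the coefficients of $\lw(f_4)$, $\lw(f_5)$ in these compositions are $b_1-a_1$, $c_1-b_1$ and are unaffected by reduction modulo $G^3$ — are all correct and essentially identical to the paper's argument. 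The genuine gap is in how you pin down $\lw(f_4)$: your \emph{only} stated exclusion criterion is that every wrong candidate ``forces a negative coefficient into $H_{R^4}-H_R$'', and this is provably false for the candidate $\lw(f_4)=x_1^2x_2^2x_1$, which is a legitimate normal word of degree $(3,2)$ modulo $G^3$.

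For that choice, the normal words of $R^4=k\langle x_1,x_2\rangle/(x_2x_1^3,\,x_2^2x_1^2,\,x_2^3x_1,\,x_1^2x_2^2x_1)$ are exactly the words $x_1^{a}C_1\cdots C_kx_2^{c}$ with blocks $C_j\in\{x_2x_1,\,x_2x_1^2,\,x_2^2x_1\}$, subject only to the restriction that $x_2^2x_1$ never immediately follows two consecutive $x_1$'s. A transfer-matrix computation then gives
\begin{equation*}
H_{R^4}(t_1,t_2)=\frac{1-t_1^3t_2^2}{(1-t_1)(1-t_2)\bigl(1-t_1t_2-t_1^2t_2-t_1t_2^2+t_1^3t_2^3\bigr)},
\end{equation*}
and subtracting $H_R=\bigl[(1-t_1)(1-t_2)(1-t_1t_2)(1-t_1^2t_2)(1-t_1t_2^2)\bigr]^{-1}$ yields a fraction whose numerator is $t_1^2t_2^3+t_1^4t_2^3+t_1^5t_2^3-t_1^6t_2^4-t_1^5t_2^5-t_1^6t_2^5+t_1^7t_2^6$ and whose denominator is a power series with nonnegative coefficients that are monotone in the bidegree (it contains the factor $[(1-t_1)(1-t_2)]^{-1}$). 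Pairing $t_1^2t_2^3$ against $-t_1^6t_2^4$, $t_1^4t_2^3$ against $-t_1^5t_2^5$, and $t_1^5t_2^3$ against $-t_1^6t_2^5$ shows that \emph{every} coefficient of $H_{R^4}-H_R$ is nonnegative; indeed this $R^4$ admits all words in the free blocks $x_2x_1,\,x_2x_1^2$ and so has exponential growth. Thus Lemma \ref{lemma-hilbert-compare} is never violated: the Hilbert data merely keeps demanding further relations, and your procedure can neither reject this candidate nor terminate on this branch. The missing ingredient — which the paper uses silently when it lists only the candidates $x_2x_1x_2x_1^2$ and $x_2x_1^2x_2x_1$, and which you yourself use for $a_3c_3\neq0$ — is the domain argument: under deg-lex, words of a fixed multidegree beginning with $x_2$ are lex-larger than those beginning with $x_1$, so if $\lw(f_4)$ began with $x_1$ then every word of $f_4$ would, giving $f_4=x_1g$; since $A$ is a domain and $x_1\neq0$, one gets $g\in\mathfrak{a}\setminus\{0\}$, hence $\lw(f_4)=x_1\lw(g)$ has a proper factor in $\lw(G)$, contradicting that $G$ is reduced. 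Once the $x_1$-initial candidates are removed this way, your Hilbert-series exclusions (which do dispose of $x_2x_1^2x_2x_1$, and, as it happens, of all six wrong candidates for $\lw(f_5)$) complete the step, and the rest of your proof goes through as written.
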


\begin{proof}
By previous discussions, we have $f_1, f_2, f_3$ are of the given form by checking normal words of corresponding degrees modulo $G^3$ (which are same modulo $G$). Since $A$ is a domain, we have $a_3c_3\neq0$. It remains to show
that $a_1= b_1=c_1$ and $G=G_{min}=\{f_1, f_2, f_3, f_4, f_5\}$ with $f_4, f_5$ of the giving form. Since
$$
H_{R^3}(t_1, t_2)-H_{R}(t_1, t_2)=t_1^3t_2^2+t_1^2t_2^3+
\sum_{|\alpha|\geq6}t_1^{\alpha_1}t_2^{\alpha_2},
$$
$G$ has exactly two relations of total degree $5$,
which are $f_4$ of degree $(3, 2)$ and $f_5$ of degree $(2, 3)$. By checking all normal words of degree
$(3, 2)$ and $(2, 3)$ modulo $G^3$, one has
$$
\lw(f_4)\in\{x_2x_1x_2x_1^2,\ x_2x_1^2x_2x_1\}
\quad \text{and}\quad \lw(f_5)\in\{x_2^2x_1x_2x_1,\ x_2x_1x_2^2x_1\}.
$$
Readily we have the following results:
\begin{enumerate}
\item
If $\lw(f_4)=x_2x_1^2x_2x_1$ and $\lw(f_5)=x_2x_1x_2^2x_1$, then
$$
H_{R^5}(t_1, t_2)-H_{R}(t_1, t_2)=-t_1^4t_2^2+t_1^3t_2^3-t_1^2t_2^4+
\sum_{|\alpha|\geq7}t_1^{\alpha_1}t_2^{\alpha_2}.
$$
\item
If $\lw(f_4)=x_2x_1^2x_2x_1$ and $\lw(f_5)=x_2^2x_1x_2x_1$, then
$$
H_{R^5}(t_1, t_2)-H_{R}(t_1, t_2)=-t_1^4t_2^2+
\sum_{|\alpha|\geq7}t_1^{\alpha_1}t_2^{\alpha_2}.
$$
\item
If $\lw(f_4)=x_2x_1x_2x_1^2$ and $\lw(f_5)=x_2x_1x_2^2x_1$, then
$$
H_{R^5}(t_1, t_2)-H_{R}(t_1, t_2)=-t_1^2t_2^4+
\sum_{|\alpha|\geq7}t_1^{\alpha_1}t_2^{\alpha_2}.
$$
\end{enumerate}

All of those are impossible by Lemma \ref{lemma-hilbert-compare}, and hence we reach the only possibility
$$
\lw(f_4)=x_2x_1x_2x_1^2\quad\text{and}\quad \lw(f_5)=x_2^2x_1x_2x_1,
$$
which follows that $H_{R^5}(t_1, t_2)=H_{R}(t_1, t_2)$ and thereby $G=G^5$. Now by checking normal words of corresponding degrees modulo $G$, one gets that $f_4, f_5$ are of the given form. We also have $G_{min}=G$ from the resolution type. Now it remains to show $a_1=b_1=c_1$. In fact, by Lemma \ref{truncated diamond lemma}, if the remainder of the composition
$S(f_2, f_1)[1, x_1, x_2, 1]$\, (resp. $S(f_3, f_2)[1, x_1, x_2, 1]$)
modulo $G^3$ is not zero, then it is a scalar multiple of $f_4$ (resp. $f_5$),
which contradicts that $G$ is minimal. Thereby $a_1=b_1=c_1$ and then we finish the proof.
\end{proof}

Now we look for the possible solutions of the coefficients of $f_i$ in Proposition \ref{type-(4,4,4,5,5)}. Firstly note that there are nine compositions of polynomials in $G$, namely
{\small\begin{eqnarray*}
\begin{array}{lllll}
&r_1=S(f_2,f_1)[1,x_1,x_2,1], &r_2=S(f_3,f_1)[1,x_1^2,x_2^2,1], &r_3=S(f_3,f_2)[1,x_1,x_2,1],\\ &r_4=S(f_3,f_4)[1,x_2x_1^2,x_2^2,1],
&r_5=S(f_3,f_5)[1,x_2x_1,x_2,1],&r_6=S(f_4,f_1)[1,x_1,x_2x_1,1],\\
&r_7=S(f_5,f_1)[1,x_1^2,x_2^2x_1,1],&r_8=S(f_5,f_4)[1,x_1,x_2,1],
&r_9=S(f_5,f_4)[1,x_2x_1^2,x_2^2x_1,1].&
\end{array}
\end{eqnarray*}}
We use a simple Program in Maple to help us calculate the remainder of these compositions modulo $G$ in $k\langle x_1, x_2\rangle$, and denote the results by $r_1, r_2, \cdots, r_9$, respectively.

Apply Lemma \ref{truncated diamond lemma}, as a necessary condition for $G$ to be Gr\"{o}bner, all the coefficients of $r_i$'s should be zero.  We use Maple to help solve the possible solutions to the system of equations given by setting the coefficients of $r_i$'s to $0$, together with $a_3 c_3\neq0,\ a_1=b_1=c_1$. By Lemma \ref{twisting}, it is equivalent to break the system into two situations:
\begin{enumerate}
\item[Case 1:] $b_1=0$. Maple Solving Command gives no solutions.
\item[Case 2:] $b_1=1$. Maple Solving Command gives one family of solutions. Twist the $b_1$ back and include them as a parameter $p$, we get the family of solutions $\mathcal{G}$ below.
\end{enumerate}

\begin{example}
Let $\mathcal{G}=\{\mathcal{G}(p, j): p\neq0,\ j^4=1\}$, where $\mathcal{G}(p, j)=k\langle x_1, x_2\rangle/(f_1, f_2, f_3)$
is the $\Z^2$-graded algebra with relations
{\small\begin{eqnarray*}
\begin{split}
&f_1=x_2x_1^3+px_1x_2x_1^2+p^2x_1^2x_2x_1+p^3x_1^3x_2,\\
&f_2=x_2^2x_1^2+px_2x_1x_2x_1+p^2x_2x_1^2x_2+p^2x_1x_2^2x_1
+p^3x_1x_2x_1x_2+p^4x_1^2x_2^2,\\
&f_3=x_2^3x_1+px_2^2x_1x_2+p^2x_2x_1x_2^2+p^3x_1x_2^3,\\
&f_4=x_2x_1x_2x_1^2+px_2x_1^2x_2x_1+p^2jx_1x_2x_1x_2x_1+p^3(j-1)x_1x_2x_1^2x_2\\
   &\hskip10mm +p^3(j-j^2)x_1^2x_2^2x_1+p^4(j-1)x_1^2x_2x_1x_2+p^5(-1+j-j^3)x_1^3x_2^2,\\
&f_5=x_2^2x_1x_2x_1+px_2x_1x_2^2x_1+p^2jx_2x_1x_2x_1x_2+p^3(j-j^2)x_2x_1^2x_2^2\\
   &\hskip10mm +p^3(j-1)x_1x_2^2x_1x_2+p^4(j-1)x_1x_2x_1x_2^2+p^5(-1+j-j^3)x_1^2x_2^3.
\end{split}
\end{eqnarray*}}
\end{example}

\begin{enumerate}
\item $\mathcal{G}(p, j)\in \mathcal{G}$ is isomorphic to a twisting $\Z^2$-graded algebra of $\mathcal{G}(1, j)$.
\item $\mathcal{G}(p, j)\in\mathcal{G}$ has no homogeneous normal element of total degree $\leq3$.
\item
If $\mathcal{G}(p, j)\in\mathcal{G}$, then $z_1:=x_1^4,\ z_2:=x_2^4,\ z_3:= (x_2x_1+px_1x_2)^2+2p^3(1-j^2)x_1^2x_2^2,\
z_4:=x_2x_1^2x_2x_1^2+p^4x_1^2x_2x_1^2x_2,\ z_5:=x_2^2x_1x_2^2x_1+p^4x_1x_2^2x_1x_2^2$ form a
normal sequence in $\mathcal{G}(p, j)$ such that the factor algebra
$\mathcal{G}(p, j)/(z_1, z_2, z_3, z_4, z_5)$ is finite dimensional.
\item If $\mathcal{G}(p, j)\in\mathcal{G}$ with $j^2=-1$, then elements
$w_4:=x_2x_1^2+p^2x_1^2x_2,\ w_5:=x_2^2x_1+p^2x_1x_2^2$ are
normal in $\mathcal{G}(p, j)/(z_1, z_2, z_3)$ such that the factor algebra
$\mathcal{G}(p, j)/(z_1, z_2, z_3, w_4, w_5)$ is also finite dimensional.
\item Algebras $\mathcal{G}(p, j)\in\mathcal{G}$ are AS-regular of global dimension $5$. They are all strongly noetherian, Auslander regular and Cohen-Macaulay.
\end{enumerate}

It is interesting to see that a minimal set of define relations of algebras in $\mathcal{G}$ is a Gr\"{o}bner basis, which is rare for AS-regular algebras of high global dimensions.

\section{Classification of type (4, 4, 4)}

It remains to consider 5-dimensional AS-regular $\Z^2$-graded algebras that are determined by three defining relations of the (total) degrees 4. The classification of this case has been worked out, by using $A_\infty$-algebraic method, under a generic condition in \cite{WW} already. For uniformity, we turn out the same result again as a reconfirmation in different ways. The AS-regular algebras of this type are listed from $\mathcal{H}$ to $\mathcal{P}$.

The minimal free resolution of the trivial module $k$ over $A^{\rm gr}$ is of the form
\begin{eqnarray*}
0
\to A^{\rm gr}(-10)
\to A^{\rm gr}(-9)^2
\to A^{\rm gr}(-6)^3
\to A^{\rm gr}(-4)^3
\to A^{\rm gr}(-1)^2
\to A^{\rm gr}
\to k_A
\to 0.
\end{eqnarray*}
So
$$
H_{(R^0)^{\rm gr}}(t)-H_{R^{\rm gr}}(t)=3t^4+\sum_{n\geq5}r_nt^n
$$
and then $G$ has exactly three relations of total degree $4$,
which are $f_1, f_2, f_3$. Hence
$G_{min}=\{f_1, f_2, f_3\}$
and the degrees of $f_1, f_2, f_3$ are all contained in
$\{(3, 1),(2, 2),(1, 3)\}$. It is impossible that any two of relations in $G$
are of the same degree $(3, 1)$ or $(1, 3)$, so
$$
\deg(f_2)=(2, 2)\quad \text{with}\quad \lw(f_2)\in \{x_2^2x_1^2,\ x_2x_1x_2x_1,\ x_2x_1^2x_2\}.
$$

\begin{lemma}
\label{lemma1-type-(4,4,4)}
It is impossible that $\deg(f_1)=\deg(f_3)=(2, 2)$.
\end{lemma}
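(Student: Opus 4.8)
The plan is to argue by contradiction, assuming all three relations $f_1,f_2,f_3$ have degree $(2,2)$. I would first record \emph{why this case is genuinely harder} than the corresponding lemmas for the types $(4,4,4,5)$ and $(4,4,4,5,5)$: there the offending extra relation of total degree $5$ had no admissible bidegree, whereas here there is no such relation to exclude. Indeed, applying Lemma \ref{minimal resolution} with $m=3$ and all $(u_i,v_i)=(2,2)$ yields only $u_{\sigma(1)}+u_{\sigma(3)}=4<p$ and $v_{\tau(1)}+v_{\tau(3)}=4<q$, which together with $p+q=10$ (the Gorenstein parameter has total degree $10$) forces $p=q=5$ but produces \emph{no} contradiction. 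So the resolution shape is self-consistent, and a finer argument using the domain hypothesis is needed.

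The key step is to pin down the three leading words. Since there are no relations of total degree $<4$, one has $A_{(1,2)}=k\langle x_1,x_2\rangle_{(1,2)}$ with $\dim A_{(1,2)}=\dim A_{(2,2)}=3$. Because $A$ is a domain, left multiplication $L_{x_1}\colon A_{(1,2)}\to A_{(2,2)}$ is injective, hence an isomorphism, so $x_1\cdot k\langle x_1,x_2\rangle_{(1,2)}\cap\mathfrak a_{(2,2)}=0$. But $x_1\cdot k\langle x_1,x_2\rangle_{(1,2)}$ is precisely the span $U$ of the three lexicographically smallest $(2,2)$-words $x_1^2x_2^2,\ x_1x_2x_1x_2,\ x_1x_2^2x_1$. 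As $\dim\mathfrak a_{(2,2)}=3=\dim U$ and $U\oplus\mathfrak a_{(2,2)}=k\langle x_1,x_2\rangle_{(2,2)}$, every nonzero relation has its leading word among the three largest $(2,2)$-words, and since the reduced Gr\"obner basis supplies three distinct leading words I conclude
$$\{\lw(f_1),\lw(f_2),\lw(f_3)\}=\{x_2x_1^2x_2,\ x_2x_1x_2x_1,\ x_2^2x_1^2\}.$$
(The symmetric isomorphisms for $R_{x_1},L_{x_2},R_{x_2}$ force, in addition, the nonvanishing of certain tail coefficients, e.g. the $x_1x_2^2x_1$-coefficient of $f_1$.)

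With the leading words fixed I would then run the Hilbert driven Gr\"obner computation. Comparing the monomial algebra on these three words with the known $H_A$ (determined by the minimal free resolution with $p=q=5$), one finds that $H_{R^3}(t_1,t_2)-H_R(t_1,t_2)$ first becomes nonzero in bidegree $(2,3)$ with coefficient $1$; by Lemma \ref{lemma-new-relation} there is a single obstruction of degree $(2,3)$, which by Lemma \ref{truncated diamond lemma} must be the reduced remainder of the unique overlap $S(f_3,f_1)$ on $x_2^2x_1^2x_2$, producing $f_4$ with $\lw(f_4)=x_2x_1x_2^2x_1$. I would continue with the degree-$(3,3)$ overlaps $S(f_2,f_2)$, $S(f_2,f_1)$, $S(f_4,f_3)$, and so on, imposing at each stage that the remainders reduce to exactly the obstructions predicted by $H_{R^i}-H_R$. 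Forcing all these reductions over-determines the tail coefficients of $f_1,f_2,f_3$, and the resulting system is incompatible with the domain nonvanishing conditions; this contradiction finishes the proof.

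The main obstacle is precisely this final step. Unlike the higher types, the resolution here leaves no numerical slack — the Gorenstein parameter $(5,5)$ is admissible and the Betti numbers are compatible — so the impossibility cannot be seen by dimension counting alone and emerges only after propagating the composition equations far enough for the coefficient constraints to become contradictory. Carrying out these reductions explicitly (the computation that, as the paper notes, requires Maple for this type) is the technical core; by contrast the domain reduction of the leading words above is the clean part, collapsing the a priori ten admissible configurations to the single one that must then be refuted.
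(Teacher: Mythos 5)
Your setup is sound and in fact reproduces the paper's opening moves: the domain argument pinning $\{\lw(f_1),\lw(f_2),\lw(f_3)\}=\{x_2x_1^2x_2,\ x_2x_1x_2x_1,\ x_2^2x_1^2\}$ is a clean elaboration of what the paper only states as ``observe'', and the identification of a single new obstruction $f_4$ of degree $(2,3)$ with $\lw(f_4)=x_2x_1x_2^2x_1$ agrees with the paper (though knowing the leading word of the remainder of $S(f_3,f_1)$ still requires the domain/normal-word argument again, not merely the existence of the overlap). The genuine gap is your final step: the contradiction is never actually derived. You assert that propagating composition equations in degree $(3,3)$ and beyond ``over-determines the tail coefficients'' and that ``the resulting system is incompatible with the domain nonvanishing conditions'', but you neither exhibit the equations nor demonstrate any incompatibility; you explicitly defer the ``technical core'' to an unperformed Maple-scale computation. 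As written, the proof stops exactly where the lemma's content begins.

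Moreover, your justification for this deferral is wrong: you claim the impossibility ``cannot be seen by dimension counting alone'', but that is precisely how the paper concludes, with no coefficient computations at all. Having fixed $\lw(G^4)=\{x_2x_1^2x_2,\ x_2x_1x_2x_1,\ x_2^2x_1^2,\ x_2x_1x_2^2x_1\}$, one pushes the Hilbert series comparison one degree further (there are in fact no new relations in total degree $6$) and finds
$$
H_{R^4}(t_1,t_2)-H_R(t_1,t_2)=3t_1^4t_2^3+2t_1^3t_2^4+\sum_{|\alpha|\geq8}r_\alpha t_1^{\alpha_1}t_2^{\alpha_2},
$$
so by Lemma \ref{lemma-new-relation} the reduced Gr\"obner basis must contain exactly two distinct relations $f_8,f_9$ of degree $(3,4)$. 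But $\nw(G^4)_{(3,4)}$ consists of seven words, of which all except $x_2x_1^3x_2^3$ begin with $x_1$; the very domain argument you used in degree $(2,2)$ (a relation all of whose terms begin with $x_1$ factors as $x_1\cdot g$ with $g$ nonzero and normal, contradicting either the domain hypothesis or the Gr\"obner property) excludes every $x_1$-initial word as a leading word, forcing $\lw(f_8)=\lw(f_9)=x_2x_1^3x_2^3$, which is impossible since distinct elements of a reduced Gr\"obner basis have distinct leading words. Note also that the Maple computations the paper invokes for type $(4,4,4)$ occur in the later lemmas (determining $\lw(f_2)$ and solving the coefficient systems), not here: this lemma has a purely combinatorial proof, and your proposal misses it by stopping the Hilbert-driven count at total degree $6$ and switching to coefficient propagation.
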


\begin{proof}
If the degrees of $f_1, f_2, f_3$ are all of degree $(2, 2)$, then
by Lemma \ref{minimal resolution} the only possible
minimal free resolution of $k_A$ is of the form:
\begin{equation*}
0 \to
\begin{array}[t]{c}
A\\[-0.5em] \text{\tiny\rm (5, 5)}
\end{array}
\to
\begin{array}[t]{c}
A^2\\[-0.5em] \text{\tiny \rm (5, 4)}\\ [-0.7em] \text{\tiny \rm (4, 5)}
\end{array}
\to
\begin{array}[t]{c}
A^3\\[-0.5em] \text{\tiny \rm (3, 3)}\\ [-0.7em] \text{\tiny \rm (3, 3)}\\[-0.7em] \text{\tiny \rm (3, 3)}
\end{array}
\to
\begin{array}[t]{c}
A^3\\[-0.5em] \text{\tiny \rm (2, 2)}\\ [-0.7em] \text{\tiny \rm  (2, 2)}\\[-0.7em] \text{\tiny \rm (2, 2)}
\end{array}
\to
\begin{array}[t]{c}
A^2\\[-0.5em]\text{\tiny \rm (1, 0)}\\ [-0.7em] \text{\tiny \rm (0, 1)}
\end{array}
\to
\begin{array}[t]{c}
A\\[-0.5em] \text{\tiny \rm (0 ,0)}
\end{array}
\to k_A
\to 0.
\end{equation*}
Observe that $\lw(f_1)=x_2x_1^2x_2$, $\lw(f_2)=x_2x_1x_2x_1$, $\lw(f_3)=x_2^2x_1^2$, one gets
$$
H_{R^3}(t_1, t_2)-H_{R}(t_1, t_2)=t_1^2t_2^3
+\sum_{|\alpha|\geq6}r_{\alpha}t_1^{\alpha_1}t_2^{\alpha_2}.
$$
So $G$ has a unique element of total degree $5$, which is $f_4$ of degree $(2, 3)$. By checking all normal words of degree $(2, 3)$ modulo $G^3$,
one gets $\lw(f_4)=x_2x_1x_2^2 x_1$ and then
$$
H_{R^4}(t_1, t_2)-H_{R}(t_1, t_2)=3t_1^4t_2^3+2t_1^3t_2^4
+\sum_{|\alpha|\geq8}r_{\alpha}t_1^{\alpha_1}t_2^{\alpha_2}.
$$
So $G$ has exact three relations, say $f_5, f_6, f_7$,
of degree $(4, 3)$, and two relations $f_8, f_9$ of degree $(3, 4)$. However, the assumption that $A$
is a domain together with the observation
$$
\nw(G^4)_{(3,4)}=\{x_2x_1^3x_2^3, x_1x_2x_1x_2^3x_1, x_1^2x_2^4x_1,
x_1^2x_2^3x_1x_2, x_1^2x_2^2x_1x_2^2, x_1^2x_2x_1x_2^3, x_1^3x_2^4\}
$$
give rise to $\lw(f_8)=\lw(f_9)=x_2x_1^3x_2^3$, which is impossible.
\end{proof}

\begin{lemma}
$\deg(f_1)=(3, 1)$ with $\lw(f_1)=x_2x_1^3$, and $\deg(f_3)=(1, 3)$ with $\lw(f_3)=x_2^3x_1$.
\end{lemma}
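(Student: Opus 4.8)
The plan is to narrow the admissible degree distribution of $\{f_1,f_2,f_3\}$ down to $(3,1),(2,2),(1,3)$. First I would exploit the enumeration convention $\lw(f_1)<_{deglex}\lw(f_2)<_{deglex}\lw(f_3)$: all three relations sit in total degree $4$, so on $\Z^2$ their degrees are ordered by the second coordinate, $(3,1)<_{deglex}(2,2)<_{deglex}(1,3)$. Combined with $\deg(f_2)=(2,2)$ this forces $\deg(f_1)\in\{(3,1),(2,2)\}$ and $\deg(f_3)\in\{(2,2),(1,3)\}$, while Lemma \ref{lemma1-type-(4,4,4)} forbids all three degrees being $(2,2)$. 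I would also note that whenever $\deg(f_1)=(3,1)$ the leading word is forced: were $\lw(f_1)$ any lex-smaller word of degree $(3,1)$, it together with all smaller degree-$(3,1)$ words would share a positive power of $x_1$ as a common left factor, and cancelling it (legitimate because $A$ is a domain) would yield a nontrivial relation of total degree $\leq3$, where $A$ has none; hence $\lw(f_1)=x_2x_1^3$. The symmetric right-cancellation argument gives $\lw(f_3)=x_2^3x_1$ whenever $\deg(f_3)=(1,3)$. Only the two mixed distributions $(3,1),(2,2),(2,2)$ and $(2,2),(2,2),(1,3)$ now remain; as these are interchanged by switching and the class under consideration is closed under switching, it suffices to rule out $\deg(f_3)=(2,2)$.

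So assume $\deg(f_3)=(2,2)$, whence the degrees are $(3,1),(2,2),(2,2)$ and $\lw(f_1)=x_2x_1^3$. Feeding this into Lemma \ref{minimal resolution} pins the Gorenstein parameter to $(6,4)$ and leaves a unique $\Z^2$-graded minimal free resolution, with relation degrees $(3,1),(2,2),(2,2)$ and syzygy degrees $(3,3),(4,2),(4,2)$; from it I read off $H_A=H_R$. As recorded above, each degree-$(2,2)$ leading word lies in $\{x_2^2x_1^2,\ x_2x_1x_2x_1,\ x_2x_1^2x_2\}$, and since $\lw(f_2)\neq\lw(f_3)$ the pair is one of its two-element subsets; ordered by the enumeration convention these are (a) $\lw(f_2)=x_2x_1^2x_2,\ \lw(f_3)=x_2x_1x_2x_1$; (b) $\lw(f_2)=x_2x_1^2x_2,\ \lw(f_3)=x_2^2x_1^2$; and (c) $\lw(f_2)=x_2x_1x_2x_1,\ \lw(f_3)=x_2^2x_1^2$.

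In each sub-case I would run the Hilbert-driven computation as in the proof of Lemma \ref{lemma2-type-(4,4,4,5)}. For (a) the comparison already gives $H_{R^3}-H_R=-t_1^4t_2^2+\sum_{|\alpha|\geq6}r_\alpha t_1^{\alpha_1}t_2^{\alpha_2}$, and the negative coefficient contradicts $H_{R^3}\geq H_R$ from Lemma \ref{lemma-hilbert-compare}. For (b) and (c) the difference $H_{R^3}-H_R$ opens with the positive total-degree-$5$ terms $t_1^3t_2^2+t_1^2t_2^3$ (resp.\ $t_1^3t_2^2$), so Lemma \ref{lemma-new-relation} yields forced relations $f_4,f_5$ (resp.\ $f_4$) of exactly those degrees. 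Their leading words are then singled out among the normal words of the relevant degree, using the domain property together with the compositions of $f_1,f_2,f_3$ through Lemma \ref{truncated diamond lemma} (for instance $\lw(f_4)=x_2x_1^2x_2x_1$ in (c)); advancing one more step produces $H_{R^i}-H_R=-t_1^4t_2^2+\cdots$, once more contradicting Lemma \ref{lemma-hilbert-compare}. This eliminates $\deg(f_3)=(2,2)$, and by switching $\deg(f_1)=(2,2)$ as well, so $\deg(f_1)=(3,1)$ and $\deg(f_3)=(1,3)$ with the asserted leading words.

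The step I expect to be the main obstacle is the normal-word bookkeeping at total degree $6$ that produces the decisive coefficient of $t_1^4t_2^2$. Because the type-$(4,4,4)$ series $H_R$ coincides with the type-$(4,4,4,5)$ one only through total degree $5$, the degree-$6$ coefficients cannot be quoted verbatim from Lemma \ref{lemma2-type-(4,4,4,5)} but must be recomputed for the present resolution; this is the routine (Maple-assisted) part, and it reassuringly reproduces the same negative $-t_1^4t_2^2$ in each sub-case.
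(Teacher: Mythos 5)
Your proof is correct and is essentially the paper's own: reduce by switching to excluding $\deg(f_3)=(2,2)$, pin down the Gorenstein-parameter-$(6,4)$ resolution via Lemma \ref{minimal resolution}, and run the three sub-cases (a)--(c) of the Hilbert-driven analysis exactly as in the proof of Lemma \ref{lemma2-type-(4,4,4,5)}; your cancellation argument forcing $\lw(f_1)=x_2x_1^3$ and $\lw(f_3)=x_2^3x_1$ nicely fills in a step the paper merely asserts.

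The only inaccuracy is in your closing paragraph: the obstacle you anticipate does not exist. Under the hypothesis $\deg(f_3)=(2,2)$, the type-$(4,4,4,5)$ resolution used in Lemma \ref{lemma2-type-(4,4,4,5)} has an extra generator of degree $(3,2)$ in homological position $2$ \emph{and} an extra generator of degree $(3,2)$ in position $3$, and these cancel in the alternating sum; consequently $H_R=H_A$ is the \emph{same} power series in both settings, namely $\big(1-t_1-t_2+t_1^3t_2+2t_1^2t_2^2-2t_1^4t_2^2-t_1^3t_2^3+t_1^6t_2^3+t_1^5t_2^4-t_1^6t_2^4\big)^{-1}$, so every coefficient computed in the earlier proof, including those in total degree $6$, can be quoted verbatim. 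This exact coincidence is precisely what licenses the paper's one-line ``the result follows immediately from the same discussion''; your planned recomputation is harmless but unnecessary, and your claim that the two series agree only through total degree $5$ would be true only if one compared the \emph{generic} resolutions of the two types (both with Gorenstein parameter $(5,5)$), which is not the relevant comparison here.
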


\begin{proof}
If $\deg(f_3)=(2, 2)$, then $\deg(f_1)=(3, 1)$ with $\lw(f_1)=x_2x_1^3$.
By Lemma \ref{minimal resolution}
the only possible minimal free resolution of $k_A$ is of the form:
\begin{equation*}
0 \to
\begin{array}[t]{c}
A\\[-0.5em] \text{\tiny\rm (6, 4)}
\end{array}
\to
\begin{array}[t]{c}
A^2\\[-0.5em] \text{\tiny \rm (6, 3)}\\ [-0.7em] \text{\tiny \rm  (5, 4)}
\end{array}
\to
\begin{array}[t]{c}
A^3\\[-0.5em] \text{\tiny \rm (4, 2)}\\ [-0.7em] \text{\tiny \rm  (4, 2)}\\[-0.7em] \text{\tiny \rm (3, 3)}
\end{array}
\to
\begin{array}[t]{c}
A^3\\[-0.5em] \text{\tiny \rm (3, 1)}\\ [-0.7em] \text{\tiny \rm  (2, 2)}\\[-0.7em] \text{\tiny \rm (2, 2)}
\end{array}
\to
\begin{array}[t]{c}
A^2\\[-0.5em] \text{\tiny \rm (1, 0)}\\ [-0.7em] \text{\tiny \rm  (0, 1)}
\end{array}
\to
\begin{array}[t]{c}
A\\[-0.5em] \text{\tiny \rm (0, 0)}
\end{array}
\to k_A
\to 0.
\end{equation*}
The result follows immediately from the same discussion as that
in the proof of Lemma \ref{lemma2-type-(4,4,4,5)}.
\end{proof}

\begin{proposition}
\label{resolution-type-(4,4,4)}
The minimal free resolution of $k_A$ is of the form:
\begin{equation*}
0 \to
\begin{array}[t]{c}
A\\[-0.5em] \text{\tiny\rm (5, 5)}
\end{array}
\to
\begin{array}[t]{c}
A^2\\[-0.5em] \text{\tiny \rm (5, 4)}\\[-0.7em] \text{\tiny \rm (4, 5)}
\end{array}
\to
\begin{array}[t]{c}
A^3\\[-0.5em] \text{\tiny \rm (4, 2)}\\[-0.7em] \text{\tiny \rm (3, 3)}\\[-0.7em] \text{\tiny \rm (2, 4)}
\end{array}
\to
\begin{array}[t]{c}
A^3\\[-0.5em] \text{\tiny \rm (3, 1)}\\[-0.7em] \text{\tiny \rm (2, 2)}\\[-0.7em] \text{\tiny \rm (1, 3)}
\end{array}
\to
\begin{array}[t]{c}
A^2\\[-0.5em] \text{\tiny \rm (1, 0)}\\[-0.7em] \text{\tiny \rm (0, 1)}
\end{array}
\to
\begin{array}[t]{c}
A\\[-0.5em] \text{\tiny \rm (0, 0)}
\end{array}
\to k_A
\to 0.
\end{equation*}
\end{proposition}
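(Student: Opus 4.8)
The plan is to read off the entire $\Z^2$-graded minimal free resolution of $k_A$ from three inputs that are already available: the degrees of the minimal relations, the Gorenstein self-duality forced by AS-regularity, and the total-degree resolution over $A^{\rm gr}$ recorded at the head of this section. As in Propositions \ref{resolution-type-(3,5,5)} and \ref{resolution-type-(3,4,7)}, I expect the statement to be an essentially immediate application of Lemma \ref{minimal resolution}.

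First I would collect the left half of the resolution. Since $A$ is generated by $x_1, x_2$, the modules $F_0 = A$ and $F_1 = A(-(1,0)) \oplus A(-(0,1))$ are forced, and the two preceding lemmas give $G_{min} = \{f_1, f_2, f_3\}$ with $\Z^2$-degrees $(3,1)$, $(2,2)$, $(1,3)$; these are exactly the generator degrees of $F_2 = A^3$. Next I would use that $A$ is Gorenstein of global dimension $5$: the minimal resolution is self-dual, so $F_5 = A(-\gamma)$ for the Gorenstein parameter $\gamma = (p,q)$, and each $F_{5-i}$ is obtained from $F_i$ by replacing every summand degree $\alpha$ with $\gamma - \alpha$. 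This is precisely the shape hypothesised in Lemma \ref{minimal resolution} with $m = 3$ and $(u_i, v_i) \in \{(3,1),(2,2),(1,3)\}$; in particular $F_4 = A(-(p,q-1)) \oplus A(-(p-1,q))$ and $F_3 = \bigoplus_i A(-(p-u_i, q-v_i))$.

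The only step that needs a line of arithmetic is fixing $(p,q)$. The displayed $A^{\rm gr}$-resolution has Gorenstein parameter $10$ in total degree, so $|\gamma| = p + q = 10$. Feeding the sorted first coordinates $1 \leq 2 \leq 3$ into Lemma \ref{minimal resolution}(1) gives $1 + 3 < p$ and $2 + 3 \leq p$, hence $p \geq 5$; the symmetric computation with the second coordinates and Lemma \ref{minimal resolution}(2) gives $q \geq 5$. With $p + q = 10$ this forces $p = q = 5$. Substituting $\gamma = (5,5)$ into the duality-determined degrees yields $F_3 = A(-(4,2)) \oplus A(-(3,3)) \oplus A(-(2,4))$, $F_4 = A(-(5,4)) \oplus A(-(4,5))$ and $F_5 = A(-(5,5))$, which is exactly the asserted resolution. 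I anticipate no genuine obstacle here; the sole subtlety is the bookkeeping that pins $p$ and $q$ individually to $5$ rather than merely constraining their sum.
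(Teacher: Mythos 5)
Your proposal is correct and is exactly the argument the paper intends: the paper's proof is the single line ``It is easy to prove by applying Lemma \ref{minimal resolution},'' and your write-up fills in precisely that application — the Gorenstein-dual shape with $m=3$ and relation degrees $(3,1),(2,2),(1,3)$ from the two preceding lemmas, the constraint $p+q=10$ from the displayed $A^{\rm gr}$-resolution, and the inequalities $1+3<p$, $2+3\leq p$ (and symmetrically for $q$) forcing $p=q=5$.
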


\begin{proof}
It is easy to prove by applying Lemma \ref{minimal resolution}.
\end{proof}

\begin{lemma}
\label{lemma3-type-(4,4,4)}
$\lw(f_2)=x_2^2x_1^2$.
\end{lemma}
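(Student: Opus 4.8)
The plan is to exclude the two wrong candidates $x_2x_1^2x_2$ and $x_2x_1x_2x_1$ (the three possibilities were pinned down just before the statement), leaving only $x_2^2x_1^2$. The two standing tools are Lemma~\ref{lemma-hilbert-compare}(2), which guarantees $H_{R^3}\geq H_R$ coefficientwise, and the fact that $H_R=H_A$ is already determined by the resolution in Proposition~\ref{resolution-type-(4,4,4)}. Since $R^3=k\langle x_1,x_2\rangle/(\lw(f_1),\lw(f_2),\lw(f_3))$ is a monomial algebra with $\lw(f_1)=x_2x_1^3$, $\lw(f_3)=x_2^3x_1$ and $\lw(f_2)$ the candidate, I would compute $\dim R^3_\beta$ by counting the degree-$\beta$ words avoiding these three monomials as factors and compare with $\dim A_\beta$ read off from $H_A$. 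A convenient bookkeeping writes each word in its block form $x_1^{a_0}x_2x_1^{a_1}x_2\cdots$, so every forbidden factor becomes a simple inequality on the exponents, and the discrepancy hunt in $H_{R^3}-H_R$ becomes a finite enumeration degree by degree.

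For $\lw(f_2)=x_2x_1^2x_2$ the coefficients of $H_{R^3}-H_R$ vanish through total degree $5$, and the first obstruction sits at $\beta=(4,2)$: the degree-$(4,2)$ words avoiding $x_2x_1^3$ and $x_2x_1^2x_2$ number $6$, whereas $\dim A_{(4,2)}=7$. Thus $H_{R^3}-H_R=-t_1^4t_2^2+\cdots$ has a negative coefficient, contradicting Lemma~\ref{lemma-hilbert-compare}(2). This disposes of $x_2x_1^2x_2$ immediately.

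The case $\lw(f_2)=x_2x_1x_2x_1$ is the main obstacle, because the naive comparison does \emph{not} yield a negative coefficient: one checks that $H_{R^3}-H_R$ stays nonnegative through total degree $7$, its first nonzero term being $+t_1^3t_2^3$ (the neighbouring coefficients at $(4,2)$ and $(2,4)$ being $0$, since there $\dim R^3=\dim A=7$). By Lemma~\ref{lemma-new-relation} this forces a Gröbner element $f_4$ of $G$ of degree $(3,3)$, something invisible to the Hilbert series alone. To finish I would analyse the degree-$(3,3)$ compositions modulo $G^3$, namely the self-overlap $S(f_2,f_2)[1,x_2x_1,x_2x_1,1]$ together with $S(f_3,f_1)$. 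If all of them are trivial modulo $G^3$, then by Lemma~\ref{truncated diamond lemma} the set $\{f_1,f_2,f_3\}$ is a Gröbner basis through degree $(3,3)$, whence $\dim A_{(3,3)}=9<10=\dim R^3_{(3,3)}$ forces a fourth \emph{minimal} relation in degree $(3,3)$, contradicting $G_{min}=\{f_1,f_2,f_3\}$ from Proposition~\ref{resolution-type-(4,4,4)}. Otherwise some composition has a nonzero remainder $f_4$, and I would locate $\lw(f_4)$ among the normal words of degree $(3,3)$ modulo $G^3$ and show that every admissible choice is incompatible with $A$ being a domain, exactly in the spirit of the $(2,2)$-subcase treated in the classification of type $(3,4,7)$.

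Once both $x_2x_1^2x_2$ and $x_2x_1x_2x_1$ are excluded, the only surviving possibility is $\lw(f_2)=x_2^2x_1^2$, as asserted. The genuinely delicate point is the $x_2x_1x_2x_1$ case: unlike type $(4,4,4,5,5)$, where the companion term $+t_1^3t_2^3$ starting in total degree $6$ already contradicts the required degree-$5$ relations, here there is no degree-$5$ relation and the forced degree-$(3,3)$ Gröbner element is non-minimal, so a genuine domain argument — rather than a Hilbert-series inequality — is what eliminates it.
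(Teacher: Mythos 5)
Your handling of the candidate $\lw(f_2)=x_2x_1^2x_2$ coincides with the paper's (the coefficient of $t_1^4t_2^2$ in $H_{R^3}-H_R$ is $-1$), and your first branch in the case $\lw(f_2)=x_2x_1x_2x_1$ is also sound: if the two degree-$(3,3)$ compositions $S(f_2,f_2)[1,x_2x_1,x_2x_1,1]$ and $S(f_3,f_1)[1,x_1^2,x_2^2,1]$ were trivial modulo $G^3$, then, since $G_{min}=\{f_1,f_2,f_3\}$ generates $\mathfrak a$, Lemma \ref{truncated diamond lemma} would force $\dim A_{(3,3)}=\dim R^3_{(3,3)}=10$, contradicting $\dim A_{(3,3)}=9$. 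The genuine gap is your second branch, which is exactly the hard part of the lemma. You propose to list the normal words of degree $(3,3)$ modulo $G^3$ and rule out each candidate for $\lw(f_4)$ by the domain property, ``in the spirit of'' the $(2,2)$-subcase of type $(3,4,7)$. That argument works there because \emph{every} normal word of degree $(3,3)$ modulo $G^2$ begins with $x_1$: for homogeneous $f$ whose leading word begins with $x_1$, every word of $f$ begins with $x_1$ (words beginning with $x_2$ are deg-lex larger), so $f=x_1f'$, and the domain property together with tip-reducedness of $G$ yields a contradiction. Here that is simply false: four of the ten normal words of degree $(3,3)$ modulo $\{x_2x_1^3,\,x_2x_1x_2x_1,\,x_2^3x_1\}$ begin with $x_2$, namely $x_2^2x_1^2x_2x_1$, $x_2x_1^2x_2^2x_1$, $x_2x_1x_2^2x_1^2$ and $x_2x_1^2x_2x_1x_2$, and these survive every leading-word argument; for the last of them one can check that $H_{R^4}-H_R=t_1^4t_2^3+\cdots$ remains nonnegative through total degree $7$, so no Hilbert-series contradiction appears either.

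Indeed, the paper's own proof shows that this last candidate genuinely occurs: there are coefficient solutions (the paper's family (2)) in which $f_4=x_2x_1^2x_2x_1x_2-x_1x_2x_1x_2^2x_1$, so no analysis of words alone can eliminate it. What is actually required, and what the paper does, is a coefficient computation: write $f_1,f_2,f_3$ in normal form modulo $G^3$ with undetermined coefficients; observe that the composition remainders in degrees $(4,2)$ and $(2,4)$ must vanish identically (those degrees carry zero coefficients in $H_{R^3}-H_R$, so by Lemma \ref{truncated diamond lemma} no nonzero normal element of $\mathfrak a$ can live there); solve the resulting equations together with $a_3c_3\neq0$, obtaining three families; and then kill each family by an explicit contradiction with the domain property --- in one family the monomial $x_1^3x_2^3$ lands in $\mathfrak a$, in another the further composition $S(f_4,f_2)[1,x_1,x_2x_1^2,1]=-x_1(x_2x_1x_2^2x_1^2+p^5x_1^2x_2^2x_1x_2)$ exhibits a zero divisor, and in the third $f_2=(x_2x_1+ex_1x_2)^2$ is a square of a nonzero element. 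Your proposal stops exactly where this coefficient analysis begins, and the shortcut you offer in its place cannot be completed.
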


\begin{proof}
It is impossible that $\lw(f_2)=x_2x_1^2x_2$, since otherwise one will get a contradiction
$$
H_{R^3}(t_1, t_2)-H_{R}(t_1, t_2)=-t_1^4t_2^2
+\sum_{|\alpha|\geq7}r_{\alpha}t_1^{\alpha_1}t_2^{\alpha_2}.
$$
So it remains to eliminate the possibility of $\lw(f_2)=x_2x_1x_2x_1$.
If it is not the case, then
$$
H_{R^3}(t_1, t_2)-H_{R}(t_1, t_2)=t_1^3t_2^3
+\sum_{|\alpha|\geq7}r_{\alpha}t_1^{\alpha_1}t_2^{\alpha_2}
$$
and hence $G$ has exactly one element of total degree $6$,
which is $f_4$ of degree $(3, 3)$. By checking normal words of corresponding degrees modulo $G^3$, one can assume
\begin{eqnarray*}
\begin{split}
&f_1=x_2x_1^3+a_1x_1x_2x_1^2+a_2x_1^2x_2x_1+a_3x_1^3x_2,\\
&f_2=x_2x_1x_2x_1+b_2x_2x_1^2x_2+b_3x_1x_2^2x_1+b_4x_1x_2x_1x_2+b_5x_1^2x_2^2,\\
&f_3=x_2^3x_1+c_1x_2^2x_1x_2+c_2x_2x_1x_2^2+c_3x_1x_2^3.\\
\end{split}
\end{eqnarray*}

Let $r_1, r_2, r_3, r_4$ be the remainders of the following four compositions
$$
S(f_2, f_1)[1, x_1^2, x_2x_1, 1],\;  S(f_2, f_2)[1, x_2x_1, x_2x_1, 1],\; S(f_3, f_1)[1, x_1^2, x_2^2, 1],\; S(f_3, f_2)[1, x_2x_1, x_2^2, 1]
$$
modulo $G^3$ in $k\langle x_1, x_2\rangle$, respectively. Coefficients of $r_1$ and $r_4$
should be $0$ by Lemma \ref{truncated diamond lemma},
and these coefficient equations together with $a_3\neq0, c_3\neq0$ give
three families of solutions:
\begin{enumerate}
\item
$a_1=0, a_2=0, a_3=-p^3, b_2=0, b_3= 0, b_4=-p^2, b_5=0, c_1=0, c_2=0, c_3=p^3$.
Then
$$
r_2=-p^2x_2x_1^2x_2x_1x_2+p^2x_1x_2x_1x_2^2x_1,\quad  r_3=2p^9x_1^3x_2^3.
$$
\item
$a_1=0, a_2=0, a_3=p^3, b_2=0, b_3= 0, b_4=-p^2, b_5=0, c_1=0, c_2=0, c_3=p^3$.
Then
$$
r_2=-p^2x_2x_1^2x_2x_1x_2+p^2x_1x_2x_1x_2^2x_1,\quad r_3=0,
$$
and hence $f_4=x_2x_1^2x_2x_1x_2-x_1x_2x_1x_2^2x_1$ by Lemma \ref{truncated diamond lemma}.
Therefore
$$
S(f_4, f_2)[1, x_1, x_2x_1^2, 1]=-x_1(x_2x_1x_2^2x_1^2+p^5x_1^2x_2^2x_1x_2)
$$
\item
$a_1=e, a_2=p, a_3=ep, b_2=e, b_3=e, b_4=e^2, b_5=0, c_1=e, c_2=q, c_3=eq.$
Then
\begin{eqnarray*}
f_2&=&x_2x_1x_2x_1+ex_2x_1^2x_2+ex_1x_2^2x_1+e^2x_1x_2x_1x_2\\
   &=&(x_2x_1+ex_1x_2)^2.
\end{eqnarray*}
\end{enumerate}

All these three families of solutions contradict that $A$ is a domain and so
the result follows.
\end{proof}

\begin{remark}
In the proof of Lemma \ref{lemma3-type-(4,4,4)}, we use Maple program to help
us calculate $r_1, r_2, r_3, r_4$, and use Maple solving command to solve
the system of coefficient equations.
Observe that either $b_2=b_3$ or $b_4=b_2b_3$ (from the coefficient of $r_i$'s), we have justified that the less complicated system can be solved by hand to give the same set of solutions.
\end{remark}

\begin{proposition}
\label{type-(4,4,4)}
$G=\{f_1, f_2, f_3, f_4, f_5\}$ and $G_{min}=\{f_1, f_2, f_3\}$ with
{\small\begin{eqnarray*}
\begin{split}
&f_1=x_2x_1^3+a_1x_1x_2x_1^2+a_2x_1^2x_2x_1+a_3x_1^3x_2,\\
&f_2=x_2^2x_1^2+b_1x_2x_1x_2x_1+b_2x_2x_1^2x_2+b_3x_1x_2^2x_1+b_4x_1x_2x_1x_2 +b_5x_1^2x_2^2,\\
&f_3=x_2^3x_1+c_1x_2^2x_1x_2+c_2x_2x_1x_2^2+c_3x_1x_2^3,\\
&f_4=x_2x_1x_2x_1^2+d_1x_2x_1^2x_2x_1+d_2x_1x_2x_1x_2x_1+d_3x_1x_2x_1^2x_2 +d_4x_1^2x_2^2x_1 + d_5x_1^2x_2x_1x_2+d_6x_1^3x_2^2,\\
&f_5=x_2^2x_1x_2x_1+e_1x_2x_1x_2^2x_1+e_2x_2x_1x_2x_1x_2+e_3x_2x_1^2x_2^2 +e_4x_1x_2^2x_1x_2 + e_5x_1x_2x_1x_2^2+e_6x_1^2x_2^3,
\end{split}
\end{eqnarray*}}
where all $a_i, b_i, c_i, d_i, e_i\in k$. Moreover, one has $a_3 c_3\neq0$, $a_1\neq b_1$ and $c_1\neq b_1$.
\end{proposition}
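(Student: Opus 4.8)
The plan is to reuse, almost verbatim, the scheme of Proposition~\ref{type-(4,4,4,5,5)}; the one genuinely new point is the strict inequalities $a_1\neq b_1$ and $c_1\neq b_1$, which encode the fact that here $f_4,f_5$ are \emph{not} minimal generators. First I would pin down $f_1,f_2,f_3$. The preceding lemmas give $\lw(f_1)=x_2x_1^3$, $\lw(f_2)=x_2^2x_1^2$, $\lw(f_3)=x_2^3x_1$, so each $f_i$ equals its leading word plus a $k$-linear combination of the strictly smaller normal words of the same bidegree modulo $G^3$ (equivalently modulo $G$); enumerating these words yields exactly the displayed expressions. For the domain conditions, if $a_3=0$ then $f_1=(x_2x_1^2+a_1x_1x_2x_1+a_2x_1^2x_2)x_1$, which would force the nonzero element $x_2x_1^2+a_1x_1x_2x_1+a_2x_1^2x_2\in A_{(2,1)}$ to be a left zero-divisor killing $x_1$, impossible in a domain; symmetrically $c_3\neq0$. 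Hence $a_3c_3\neq0$.

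Next I would locate the remaining Gr\"obner elements exactly as before. Since the obstructions coincide with those of type $(4,4,4,5,5)$, one has $H_{R^3}(t_1,t_2)-H_R(t_1,t_2)=t_1^3t_2^2+t_1^2t_2^3+\sum_{|\alpha|\ge6}r_\alpha t_1^{\alpha_1}t_2^{\alpha_2}$, so by Lemma~\ref{lemma-new-relation} the set $G\setminus G^3$ contains precisely one relation $f_4$ of bidegree $(3,2)$ and one relation $f_5$ of bidegree $(2,3)$. The three mixed possibilities for $(\lw(f_4),\lw(f_5))\in\{x_2x_1x_2x_1^2,x_2x_1^2x_2x_1\}\times\{x_2^2x_1x_2x_1,x_2x_1x_2^2x_1\}$ each produce a forbidden negative coefficient in $H_{R^5}-H_R$, exactly as in the proof of Proposition~\ref{type-(4,4,4,5,5)}, leaving $\lw(f_4)=x_2x_1x_2x_1^2$ and $\lw(f_5)=x_2^2x_1x_2x_1$. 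This forces $H_{R^5}=H_R$, whence $G=G^5=\{f_1,\dots,f_5\}$ by Lemma~\ref{lemma-hilbert-compare}(3); reading off normal words of the two bidegrees then gives the stated shapes of $f_4,f_5$, and $G_{min}=\{f_1,f_2,f_3\}$ is immediate from Proposition~\ref{resolution-type-(4,4,4)}.

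The crux is the pair $a_1\neq b_1$, $c_1\neq b_1$, where the logic is opposite to the $(4,4,4,5,5)$ case. Since $G_{min}=\{f_1,f_2,f_3\}=G^3$ generates $\mathfrak{a}$ (Lemma~\ref{lemma-hilbert-compare}(4)), we have $(G^3)_\beta=\mathfrak{a}_\beta$ for every $\beta$. The unique composition of $G^3$ of bidegree $\le(3,2)$ is $S(f_2,f_1)[1,x_1,x_2,1]=f_2x_1-x_2f_1$. Were its remainder modulo $G^3$ zero, then condition~(5) of Lemma~\ref{truncated diamond lemma} would hold at $\alpha=(3,2)$, hence so would condition~(4), giving the absurdity $\dim A_{(3,2)}=\dim R^3_{(3,2)}=\dim A_{(3,2)}+1$. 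So the remainder is a nonzero scalar multiple of $f_4$, and its leading word is $\lw(f_4)=x_2x_1x_2x_1^2$. Reducing $f_2x_1-x_2f_1$ modulo $G^3$ (the only terms needing reduction are $x_2x_1^3x_2$ via $f_1$ and $x_1x_2^2x_1^2$ via $f_2$, both yielding strictly smaller words) shows that the surviving coefficient of $x_2x_1x_2x_1^2$ is $b_1-a_1$; nonvanishing gives $a_1\neq b_1$. The same argument at $\alpha=(2,3)$, applied to the unique composition $S(f_3,f_2)[1,x_1,x_2,1]=f_3x_1-x_2f_2$ whose remainder has leading coefficient $c_1-b_1$ at $\lw(f_5)=x_2^2x_1x_2x_1$, yields $c_1\neq b_1$.

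The main obstacle I anticipate is conceptual rather than computational: one must notice that, in contrast with Proposition~\ref{type-(4,4,4,5,5)}, the two degree-$5$ compositions are \emph{forced} to have nonzero remainders here---because $f_4,f_5$ lie in $(G^3)=\mathfrak{a}$ yet still contribute new leading words---and then carry the reductions of $f_2x_1-x_2f_1$ and $f_3x_1-x_2f_2$ far enough to isolate the coefficients $b_1-a_1$ and $c_1-b_1$ at the respective new leading words. Everything else, the normal-word enumerations and the Hilbert-series exclusions, is routine given the earlier lemmas and the parallel with the $(4,4,4,5,5)$ computation.
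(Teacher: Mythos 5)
Your proposal is correct and follows essentially the same route as the paper's proof: pin down $f_1,f_2,f_3$ from the preceding lemmas, use the difference $H_{R^3}-H_R$ (as in the type $(4,4,4,5,5)$ case) to force $\lw(f_4)=x_2x_1x_2x_1^2$, $\lw(f_5)=x_2^2x_1x_2x_1$ and $G=G^5$, get $G_{min}=\{f_1,f_2,f_3\}$ from the resolution type, and extract $a_1\neq b_1$, $c_1\neq b_1$ from the two degree-five compositions $S(f_2,f_1)[1,x_1,x_2,1]$ and $S(f_3,f_2)[1,x_1,x_2,1]$ having nonzero remainders modulo $G^3$. Your explicit argument for the nonvanishing of those remainders (condition (5) would imply condition (4) of Lemma \ref{truncated diamond lemma}, contradicting $\dim R^3_{(3,2)}=\dim A_{(3,2)}+1$) and your computation of the surviving coefficients $b_1-a_1$ and $c_1-b_1$ simply fill in details that the paper compresses into ``Applying Lemma \ref{truncated diamond lemma}, the remainder \dots is a non-zero scalar multiple of $f_4$ (resp.\ $f_5$).''
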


\begin{proof}
By previous discussion we have $G_{min}=\{f_1, f_2, f_3\}$, and
by a similar discussion as that in the proof of Proposition \ref{type-(4,4,4,5,5)}, one can see that
$G=\{f_1, f_2, f_3, f_4, f_5\}$ with $f_1, f_2, \cdots, f_5$
are of the given form. Since $A$ is a domain we have $a_3c_3\neq0$.
Applying Lemma \ref{truncated diamond lemma}, the remainder of the composition
$S(f_2, f_1)[1, x_1, x_2, 1]$ (resp. $S(f_3, f_2)[1, x_1, x_2, 1]$) modulo $G^3$
is a non-zero scalar multiple of $f_4$ (resp. $f_5$) in $k\langle x_1, x_2\rangle$. It follows immediately $a_1\neq b_1$ and $c_1\neq b_1$, and then we finish the proof.
\end{proof}

Now we turn to find possible solutions of the coefficients of $f_i$ in Proposition \ref{type-(4,4,4)}. The relations of $G$ and their compositions
have exactly the same form of that for type (4, 4, 4, 5, 5). The difference is that the system of equations to be solved here consisting of these by setting the coefficients of $r_i$'s to $0$, together with $a_3 c_3\neq0,\ a_1\neq b_1$ and $b_1\neq c_1$. Moreover, we will always assume $b_3=0$ whenever $b_2b_3=0$, otherwise, we can pass to $A^{\omega}$. Apply Lemma \ref{twisting}, it is equivalent to  break the system into eight situations as follows:
\begin{enumerate}
\item[Case 1:] $b_1=1,\, b_2=b_3=0$. Maple Solving Command gives $2$ families of solutions. Twist the $b_1$ back and include them as a parameter $p$, we get the families of solutions $\mathcal{H}$ and $\mathcal{I}$ below.
\item[Case 2:] $b_1=1,\, b_2\neq0,\, b_3=0$. Maple Solving Command gives $1$ family of solutions. Twist the $b_1$ back and include them as a parameter $p$, we get the families of solutions $\mathcal{J}$ below.
\item[Case 3:] $b_1=1,\, b_2=b_3\neq0$. Maple Solving Command gives $2$ families of solutions. Twist the $b_1$ back and include them as a parameter $p$, we get the families of solutions $\mathcal{K},\,\mathcal{L}$ below.
\item[Case 4:] $b_1=1,\, b_2\neq b_3,\, b_2\neq 0,\, b_3\neq0$. Maple Solving Command gives no solution.
\item[Case 5:] $b_1=0,\, b_2=b_3=0,\, a_1=1$. Maple Solving Command gives $2$ families of solutions. Twist the $a_1$ back and include them as a parameter $p$, we get the families of solutions $\mathcal{M},\,\mathcal{N}$ below.
\item[Case 6:] $b_1=0,\, b_2\neq0,\, b_3=0,\, c_1=1$. Maple Solving Command gives $1$ family of solutions. Twist the $c_1$ back and include them as a parameter $p$, we get the family of solutions $\mathcal{O}$ below.
\item[Case 7:] $b_1=0,\, b_2=b_3\neq0,\, a_1=1$. Maple Solving Command gives $1$ family of solutions. Twist the $a_1$ back and include them as a parameter $p$, we get the family of solutions  $\mathcal{P}$ below.
\item[Case 8:] $b_1=0,\, b_2\neq b_3,\, b_2\neq0,\, b_3\neq0,\, a_1=1$. Maple Solving Command gives no solution.
\end{enumerate}

\begin{example}
Let $\mathcal{H}=\{\mathcal{H}(p, q): p\neq0,\ q\neq0\}$, where $\mathcal{H}(p, q)=k\langle x_1, x_2\rangle/(f_1, f_2, f_3)$
is the $\Z^2$-graded algebra with relations
\begin{eqnarray*}
\begin{split}
&f_1=x_2x_1^3-p^3q^3x_1^3x_2\\
&f_2=x_2^2x_1^2+px_2x_1x_2x_1-p^3q^2x_1x_2x_1x_2-p^4q^4x_1^2x_2^2\\
&f_3=x_2^3x_1-p^3q^3x_1x_2^3.
\end{split}
\end{eqnarray*}
\end{example}

\begin{enumerate}
\item
$\mathcal{H}(p, q)$ is isomorphic to a twisting $\Z^2$-graded algebra of $\mathcal{H}(1, q)$.
\item
$\mathcal{H}(p, q)\in\mathcal{H}$ corresponds to  $\mathbf{A}(\mathbf{l}_2, \mathbf{t})$ in \cite{WW}
by $\mathbf{l}_2=p,\ \mathbf{t}=-pq$.
\end{enumerate}

\begin{example}
Let $\mathcal{I}=\{\mathcal{I}(p, q): p\neq0,\ q\neq0\}$, where $\mathcal{I}(p, q)=k\langle x_1, x_2\rangle/(f_1, f_2, f_3)$
is the $\Z^2$-graded algebra with relations
\begin{eqnarray*}
\begin{split}
&f_1=x_2x_1^3+p(q+1)x_1x_2x_1^2+p^2(q^2+q)x_1^2x_2x_1+p^3q^3x_1^3x_2\\
&f_2=x_2^2x_1^2+px_2x_1x_2x_1-p^3q^2x_1x_2x_1x_2-p^4q^4x_1^2x_2^2\\
&f_3=x_2^3x_1+p(q+1)x_2^2x_1x_2+p^2(q^2+q)x_2x_1x_2^2+p^3q^3x_1x_2^3.
\end{split}
\end{eqnarray*}
\end{example}

\begin{enumerate}
\item $\mathcal{I}(p, q)$ is isomorphic to a twisting $\Z^2$-graded algebra of $\mathcal{I}(1, q)$.
\item $\mathcal{I}(p, -1)=\mathcal{H}(p, 1)$.
\item If $\mathcal{I}(p, q)\in\mathcal{I}$, then the element $z=x_2x_1^2+px_1x_2x_1+p^2q^2x_1^2x_2$
is regular normal in $\mathcal{I}(p, q)$ and the factor algebras
$\mathcal{I}(p, q)/(z)$ is isomorphic to $\mathbf{D}(\mathbf{v},\mathbf{p})$ in \cite{LPWZ} with $\mathbf{v}=p,\ \mathbf{p}=pq$.
\item $\mathcal{I}(p, q)\in\mathcal{I}$ corresponds to  $\mathbf{E}(\mathbf{p},\mathbf{t})$ in \cite{WW} by
$\mathbf{p}=p(q+1),\ \mathbf{t}=pq$.
\end{enumerate}

\begin{example}
Let $\mathcal{J}=\{\mathcal{J}(p, j): p\neq0,\ j^4+j^3+j^2+j+1=0\}$, where $\mathcal{J}(p, j)=k\langle x_1, x_2\rangle/(f_1, f_2, f_3)$
is the $\Z^2$-graded algebra with relations
{\small\begin{eqnarray*}
\begin{split}
&f_1=x_2x_1^3-pjx_1x_2x_1^2-p^2j(j^2+j+1)x_1^2x_2x_1+p^3(j^3-2j-2)x_1^3x_2\\
&f_2=x_2^2x_1^2+px_2x_1x_2x_1-p^2j(j^2+2j+2)x_2x_1^2x_2-p^3(j+1)^2x_1x_2x_1x_2
   -p^4(3j^2+5j+3)x_1^2x_2^2\\
&f_3=x_2^3x_1+p(1-j^2-j^3)x_2^2x_1x_2+p^2(1-2j^2-2j^3)x_2x_1x_2^2
   +p^3(1-2j^2-2j^3)x_1x_2^3.
\end{split}
\end{eqnarray*}}
\end{example}

\begin{enumerate}
\item $\mathcal{J}(p, j)$ is isomorphic to a twisting $\Z^2$-graded algebra of $\mathcal{J}(1, j)$.
\item If $\mathcal{J}(p, j)\in\mathcal{J}$, then the element $z=x_2x_1^2+px_1x_2x_1+p^2(j^2+j^3)^2x_1^2x_2$
is regular normal in $\mathcal{J}(p, j)$ and the factor algebras
$\mathcal{J}(p, j)/(z)$ is isomorphic to $\mathbf{D}(\mathbf{v}, \mathbf{p})$ in \cite{LPWZ} with
$\mathbf{v}=p,\ \mathbf{p}=-p(j^2+j^3)$.
\item $\mathcal{J}(p, j)\in\mathcal{J}$ corresponds to $\mathbf{I}(\mathbf{c}, \mathbf{g})$ in \cite{WW} by
$\mathbf{c}=-p(1+j),\ \mathbf{g}=j^3$.
\end{enumerate}

\begin{example}
Let $\mathcal{K}=\{\mathcal{K}(p, q): p\neq0,\ q\neq0,\ q\neq1\}$, where $\mathcal{K}(p, q)=k\langle x_1, x_2\rangle/(f_1, f_2, f_3)$
is the $\Z^2$-graded algebra with relations
\begin{eqnarray*}
\begin{split}
&f_1=x_2x_1^3+pqx_1x_2x_1^2+p^2q^2x_1^2x_2x_1+p^3q^3x_1^3x_2\\
&f_2=x_2^2x_1^2+px_2x_1x_2x_1+p^2qx_2x_1^2x_2+p^2qx_1x_2^2x_1
   +p^3q^2x_1x_2x_1x_2+p^4q^4x_1^2x_2^2\\
&f_3=x_2^3x_1+pqx_2^2x_1x_2+p^2q^2x_2x_1x_2^2+p^3q^3x_1x_2^3.
\end{split}
\end{eqnarray*}
\end{example}

\begin{enumerate}
\item $\mathcal{K}(p, q)$ is isomorphic to a twisting $\Z^2$-graded algebra of $\mathcal{K}(1, q)$.
\item $\mathcal{K}(p, q)\in\mathcal{K}$ corresponds to  $\mathbf{F}(\mathbf{l}_2, \mathbf{p})$ in \cite{WW} by
$\mathbf{l}_2=p,\ \mathbf{p}=pq$.
\end{enumerate}

\begin{example}
Let $\mathcal{L}=\{\mathcal{L}(p, q, r): p\neq0, \ q\neq0,\ r\neq0,\ q^3-(r+2)q^2+(2r^2+2r+1)q-r^2-r=0\}$,
where $\mathcal{L}(p, q, r)=k\langle x_1, x_2\rangle/(f_1, f_2, f_3)$
is the $\Z^2$-graded algebra with relations
\begin{eqnarray*}
\begin{split}
&f_1=x_2x_1^3+p(r+1)x_1x_2x_1^2+p^2q(r+1)x_1^2x_2x_1+p^3q^3x_1^3x_2\\
&f_2=x_2^2x_1^2+px_2x_1x_2x_1+p^2(q-r)(r+1)x_2x_1^2x_2+p^2(q-r)(r+1)x_1x_2^2x_1\\
   &\hskip10mm
   +p^3(q+r)(q-r^2-r)x_1x_2x_1x_2+p^4q^2(q-r^2-r)x_1^2x_2^2\\
&f_3=x_2^3x_1+p(r+1)x_2^2x_1x_2+p^2q(r+1)x_2x_1x_2^2+p^3q^3x_1x_2^3.
\end{split}
\end{eqnarray*}
\end{example}

\begin{enumerate}
\item $\mathcal{L}(p, q, r)$ is isomorphic to a twisting $\Z^2$-graded algebra of $\mathcal{L}(1, q, r)$.
\item $\mathcal{L}(p, q, -1)= \mathcal{H}(p, 1)$, here $q^2-q+1=0$.
\item $\mathcal{L}(-2, 1/2, 1/2)\in \mathcal{L}$ is the enveloping algebra of the $5$-dimensional $\Z^2$-graded Lie algebra $\mathfrak{g}={\rm Lie}(x_1, x_2)/(f_1, f_2, f_3)$, where ${\rm Lie}(x_1, x_2)$ is the $\Z^2$-graded free Lie algebra on $\{x_1, x_2\}$ with grading given by
$\deg(x_1)=(1, 0)$, $\deg(x_2)=(0, 1)$, and where $f_1=[[[x_2x_1]x_1]x_1]$,\, $f_2=[x_2[[x_2x_1]x_1]]$,\, $f_3=[x_2[x_2[x_2x_1]]]$.
\item $\mathcal{L}(p,q,r)\in\mathcal{L}$ with $r\neq-1$ and $q\neq 1/2$ corresponds to $\mathbf{G}(\mathbf{q}, \mathbf{r}, \mathbf{g})$ in \cite{WW} by
$\mathbf{q}=p^2q(r+1),\ \mathbf{r}=p^3q^3,\ \mathbf{g}=p^5q^4(qr-q^2-r^2+q-r)/r$. It is straightforward but tedious to check that $\mathbf{q}, \mathbf{r}, \mathbf{g}$ satisfy
the constraints given in \cite{WW} for algebras $\mathbf{G}$. Indeed, the inverse function is given by
{\small$$
p=\frac{\textbf{r}^2(\textbf{g}-\textbf{q}\textbf{r})}{\textbf{g}(\textbf{g} +\textbf{q}\textbf{r})},\;\; q=-\frac{\textbf{q}\textbf{r}\textbf{g}^2(\textbf{g}+\textbf{q}\textbf{r})}{ (\textbf{g}-\textbf{q}\textbf{r})(\textbf{r}^5 +\textbf{q}\textbf{r}\textbf{g}^2+\textbf{g}^3)},\;\;
r=-\frac{(\textbf{g}+\textbf{q}\textbf{r})(\textbf{r}^5 +\textbf{q}\textbf{r}\textbf{g}^2+\textbf{g}^3)}{\textbf{r}^5 (\textbf{g}-\textbf{q}\textbf{r})}-1.
$$}
A simple Maple code is given to help us do the calculation.
\end{enumerate}

\begin{example}
Let $\mathcal{M}=\{\mathcal{M}(p): p\neq0\}$, where $\mathcal{M}(p)=k\langle x_1, x_2\rangle/(f_1, f_2, f_3)$
is the $\Z^2$-graded algebra with relations
\begin{eqnarray*}
\begin{split}
&f_1=x_2x_1^3+px_1x_2x_1^2+p^2x_1^2x_2x_1+p^3x_1^3x_2\\
&f_2=x_2^2x_1^2-p^4x_1^2x_2^2\\
&f_3=x_2^3x_1+px_2^2x_1x_2+p^2x_2x_1x_2^2+p^3x_1x_2^3.
\end{split}
\end{eqnarray*}
\end{example}

\begin{enumerate}
\item $\mathcal{M}(p)$ is isomorphic to a twisting $\Z^2$-graded algebra of $\mathcal{M}(1)$.
\item If $\mathcal{M}(p)\in\mathcal{M}$, then the element $z=x_2x_1^2+p^2x_1^2x_2$
is regular normal in $\mathcal{M}(p)$ and the factor algebras
$\mathcal{M}(p)/(z)$ is isomorphic to $\mathbf{D}(\mathbf{v},\mathbf{p})$ in \cite{LPWZ} with $\mathbf{v}=0,\ \mathbf{p}=p$.
\item $\mathcal{M}(p)\in\mathcal{M}$ corresponds to $\mathbf{C}(\mathbf{p})$ in \cite{WW} by
$\mathbf{p}=p$.
\end{enumerate}

\begin{example}
Let $\mathcal{N}=\{\mathcal{N}(p): p\neq0\}$, where $\mathcal{N}(p)=k\langle x_1, x_2\rangle/(f_1, f_2, f_3)$
is the $\Z^2$-graded algebra with relations
\begin{eqnarray*}
\begin{split}
&f_1=x_2x_1^3+px_1x_2x_1^2+p^2x_1^2x_2x_1+p^3x_1^3x_2\\
&f_2=x_2^2x_1^2+p^4x_1^2x_2^2\\
&f_3=x_2^3x_1+px_2^2x_1x_2+p^2x_2x_1x_2^2+p^3x_1x_2^3.
\end{split}
\end{eqnarray*}
\end{example}

\begin{enumerate}
\item $\mathcal{N}(p)$ is isomorphic to a twisting $\Z^2$-graded algebra of $\mathcal{N}(1)$.
\item $\mathcal{N}(p)\in\mathcal{N}$ corresponds to $\mathbf{B}(\mathbf{p})$ in \cite{WW} by
$\mathbf{p}=p$.
\end{enumerate}

\begin{example}
Let $\mathcal{O}=\{\mathcal{O}(p, j): p\neq0,\ j^2+1=0\}$, where $\mathcal{O}(p, j)=k\langle x_1, x_2\rangle/(f_1, f_2, f_3)$
is the $\Z^2$-graded algebra with relations
\begin{eqnarray*}
\begin{split}
&f_1=x_2x_1^3+pjx_1x_2x_1^2-p^2jx_1^2x_2x_1+p^3x_1^3x_2\\
&f_2=x_2^2x_1^2+p^2(1-j)x_2x_1^2x_2-p^4jx_1^2x_2^2\\
&f_3=x_2^3x_1+px_2^2x_1x_2+p^2x_2x_1x_2^2+p^3x_1x_2^3.
\end{split}
\end{eqnarray*}
\end{example}

\begin{enumerate}
\item $\mathcal{O}(p, j)$ is isomorphic to a twisting $\Z^2$-graded algebra of $\mathcal{O}(1, j)$.
\item If $\mathcal{O}(p, j)\in\mathcal{O}$, then the element $z=x_2x_1^2-p^2jx_1^2x_2$
is regular normal in $\mathcal{O}(p, j)$ and the factor algebra
$\mathcal{O}(p, j)/(z)$ is isomorphic to $\mathbf{B}(\mathbf{p}, \mathbf{j})$ in \cite{LPWZ} with $\mathbf{p}=p,\ \mathbf{j}=-j$.
\item $\mathcal{O}(p, j)\in\mathcal{O}$ corresponds to $\mathbf{H}(\mathbf{i}, \mathbf{p})$ in \cite{WW} by
$\mathbf{p}=p,\ \mathbf{i}=j$.
\end{enumerate}

\begin{example}
Let $\mathcal{P}=\{\mathcal{P}(p, j): p\neq0,\ j^2-j+2=0\}$, where $\mathcal{P}(p, j)=k\langle x_1, x_2\rangle/(f_1, f_2, f_3)$
is the $\Z^2$-graded algebra with relations
\begin{eqnarray*}
\begin{split}
&f_1=x_2x_1^3+px_1x_2x_1^2+p^2jx_1^2x_2x_1-p^3(j+2)x_1^3x_2\\
&f_2=x_2^2x_1^2+p^2(j-1)x_2x_1^2x_2+p^2(j-1)x_1x_2^2x_1
   -p^3(j+1)x_1x_2x_1x_2-p^4(j-2)x_1^2x_2^2\\
&f_3=x_2^3x_1+px_2^2x_1x_2+p^2jx_2x_1x_2^2-p^3(j+2)x_1x_2^3.
\end{split}
\end{eqnarray*}
\end{example}

\begin{enumerate}
\item $\mathcal{P}(p, j)$ is isomorphic to a twisting $\Z^2$-graded algebra of $\mathcal{P}(1, j)$.
\item $\mathcal{P}(p, j)\in\mathcal{P}$ corresponds to $\mathbf{D}(\mathbf{p},\mathbf{q})$ in \cite{WW} by
$\mathbf{p}=p,\ \mathbf{q}=p^2j$.
\end{enumerate}

The correspondence given in the above discussions together with \cite[Theorem 5.2, Theorem 5.4, Theorem 5.5, Theorem 5.8, Theorem 5.9]{WW} gives rise to the following result.

\begin{proposition}
Algebras in the families from $\mathcal{H}$ to $\mathcal{P}$ are AS-regular of global dimension $5$. They are all strongly noetherian, Auslander regular and Cohen-Macaulay.
\end{proposition}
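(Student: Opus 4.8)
The plan is to deduce all four properties for the nine families $\mathcal{H}$ through $\mathcal{P}$ from the classification of Wang and Wu in \cite{WW}, exploiting the explicit parameter dictionaries recorded in the accompanying list of facts for each Example. Nothing new is established about the algebras themselves; instead one transports properties across isomorphisms. Since AS-regularity, Auslander regularity, Cohen--Macaulayness and the strongly noetherian property are all invariants of the isomorphism class of a connected graded algebra --- and since, by the Section~1 lemma equating AS-regularity of $A$ with that of $A^{\mathrm{gr}}$, they may be tested after forgetting the finer $\Z^2$-grading --- it suffices to exhibit, for each family, a graded isomorphism onto a member of one of the Wang--Wu families $\mathbf{A},\mathbf{B},\dots,\mathbf{I}$, and then to quote \cite[Theorems 5.2, 5.4, 5.5, 5.8, 5.9]{WW}, where precisely these homological and ring-theoretic conclusions are proved for those algebras.

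First I would normalize by twisting: by Lemma~\ref{twisting} each algebra in a family is a twisting $\Z^2$-graded algebra of its $p=1$ representative, and twisting preserves the four properties (a standard fact, used throughout the paper), so one may fix $p=1$ at the outset. Next, for each family I would substitute the stated dictionary --- for instance $\mathbf{l}_2=p,\ \mathbf{t}=-pq$ for $\mathcal{H}$, or $\mathbf{q}=p^2q(r+1),\ \mathbf{r}=p^3q^3,\ \mathbf{g}=p^5q^4(qr-q^2-r^2+q-r)/r$ for $\mathcal{L}$ --- into the defining relations of the target Wang--Wu algebra and verify that the resulting relations coincide with $f_1,f_2,f_3$ of the Example up to the obvious rescaling of $x_1,x_2$. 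This is a finite, purely mechanical check; the explicit inverse substitution displayed for $\mathcal{L}$ shows how to confirm that the dictionary is a bijection onto the admissible parameter region of $\mathbf{G}$, and analogous (simpler) inverses exist for the other eight families.

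The hard part will be the bookkeeping around the \emph{generic condition} under which \cite{WW} operates, together with the degenerate overlaps among our families. The dictionaries must be checked to map the constraint loci of $\mathcal{H},\dots,\mathcal{P}$ \emph{onto} (not merely into) the parameter ranges where the Wang--Wu theorems apply, and one must account for coincidences such as $\mathcal{I}(p,-1)=\mathcal{H}(p,1)$ and $\mathcal{L}(p,q,-1)=\mathcal{H}(p,1)$ so that no algebra is left uncovered. For the finitely many degenerate fibres that could fall outside the generic locus, I would fall back on the self-contained route of hint~(5): the Examples $\mathcal{I},\mathcal{J},\mathcal{M},\mathcal{O}$ already supply a regular normal element $z$ whose quotient is one of the four-dimensional AS-regular algebras $\mathbf{B},\mathbf{D}$ of \cite{LPWZ}, so that \cite[Lemma 1.3]{RZ} upgrades the known properties of the quotient to the full algebra directly; where such a $z$ is not listed, the degenerate fibre arises as a specialization of a family for which it is, or else the properties can be read off from the Gr\"obner-basis and Hilbert-series criteria of Section~2. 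Indeed, this regular-normal-element argument, applied wherever such a $z$ (or a length-five normal regular sequence with finite-dimensional cokernel) is available, furnishes an alternative proof bypassing \cite{WW} altogether, and I would keep it in reserve as a safety net for any parameter value at which the correspondence to a generic Wang--Wu algebra is in doubt.
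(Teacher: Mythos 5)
Your proposal is correct and is essentially the paper's own proof: the paper justifies this proposition in a single sentence by combining the parameter dictionaries recorded in Examples $\mathcal{H}$ through $\mathcal{P}$ with \cite[Theorem 5.2, Theorem 5.4, Theorem 5.5, Theorem 5.8, Theorem 5.9]{WW}, exactly as you do, with the degenerate identifications $\mathcal{I}(p,-1)=\mathcal{H}(p,1)$, $\mathcal{L}(p,q,-1)=\mathcal{H}(p,1)$, the enveloping-algebra member of $\mathcal{L}$, and the normal-element route via \cite[Lemma 1.3]{RZ} already recorded among the examples' stated facts. Your additional bookkeeping (twisting invariance, surjectivity of the dictionaries onto the Wang--Wu parameter ranges) is sound verification of the same route, not a different argument.
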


\section{Proof and Question}

Our main results follow immediately from the discussions in previous sections. Here we give a short summary of the discussions to justify the statements.

\begin{proof}
[Proof of Theorem \ref{Thm A}]
Firstly note that all algebras in $\mathfrak{X}$ are AS-regular properly $\Z^2$-graded algebras of global dimension five which are domains with two generators and of GK-dimension five by showing that each one of them is either a normal extension of four dimensional AS-regular algebras, or has enough normal elements, or is an iterated Ore extension of polynomial rings \cite[Subsection 5.2]{WW}. Secondly if $A, A'$ are two algebras in $\mathfrak{X}$ and $f:A\to A'$ is an isomorphism of $\Z^2$-graded algebras, then $f$ is determined by $f(x_i)=a_ix_i$ for $i=0,1$, where $a_i\in k$ is nonzero. It is easy to see that such $f$ exists iff $A=A'$, and so algebras in $\mathfrak{X}$ are pairwise non-isomorphic as $\Z^2$-graded algebras. Lastly, by previous discussions, if $A$ is an AS-regular properly $\Z^2$-graded algebra of global dimension five which is a domain with two generators and of GK-dimension at least four, then $A$ or $A^{\omega}$ falls into $\mathfrak{X}$ up to isomorphism of $\Z^2$-graded algebras. Now the result follows.
\end{proof}

\begin{proof}
[Proof of Theorem \ref{Thm B}]
We have already seen that all algebras in $\mathfrak{X}$ are strongly noetherian,
AS-regular and Cohen-Macaulay. Observe that these properties are preserved
under switching, the result follows readily from
Theorem \ref{Thm A}.
\end{proof}

\begin{proof}
[Proof of Corollary \ref{Thm C}]
Algebras in $\mathcal{G}$ fulfill the requirement of the first statement, while Proposition \ref{type-(4,4,4,5)} justifies the second one.
\end{proof}

Finally, we mention briefly the question raised in the introduction.
There are plenty of AS-regular algebras of which the obstructions
consisted of Lyndon words. For example, the quantum plane
$k\langle x_1, x_2\rangle/(x_2x_1-qx_1x_2)$ and the Jordan plane $k\langle x_1, x_2\rangle/(x_2x_1-x_1x_2-x_1^2)$; the algebras of type S1 and type S2 in \cite{ASc}; the algebras obtained in \cite{LPWZ,RZ,WW}; the binomial skew polynomial rings defined in \cite{GI2}; the universal enveloping algebra of finite dimensional one-generated graded Lie algebras.
The following result provides another evidence of the positive answer of Question \ref{question D}.

\begin{theorem}
Let $k\langle x_1, x_2\rangle$ be $\Z^2$-graded free algebra with $\deg(x_1)=(1, 0)$ and $\deg(x_2)=(0, 1)$. Let $\mathfrak{a}$ be a homogeneous ideal of $k\langle x_1,x_2\rangle$, $A=k\langle x_1,x_2\rangle/\mathfrak{a}$ and $G$ the reduced Gr\"{o}bner basis of $\mathfrak{a}$ with respect to the deg-lex order. Then $\lw(G)$ consists of Lyndon words if the algebra $A$ is an AS-regular domain either of global dimension $\leq 4$ or of global dimension $5$ and GK-dimension $\geq4$.
\end{theorem}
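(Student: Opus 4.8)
The plan is to split the argument according to the global dimension of $A$, treating first the main case $\gldim A=5$ with $\gkdim A\geq4$ and then the lower-dimensional cases. For the former I would invoke Theorem \ref{Thm A}: up to isomorphism of $\Z^2$-graded algebras and switching, $A$ is one of the families $\mathcal{A},\dots,\mathcal{P}$ of Sections 3 to 7. For each of these the reduced Gr\"obner basis $G$ was determined explicitly, and in every case $\lw(G)$ is listed and is independent of the parameters. Hence the claim reduces to the finite combinatorial task of checking that each word
$$
x_2x_1^2,\ x_2^2x_1x_2x_1,\ x_2^4x_1,\ x_2^3x_1x_2^2x_1,\ x_2^3x_1,\ x_2^2x_1x_2x_1x_2x_1,\ x_2x_1^3,\ x_2^2x_1^2,\ x_2x_1x_2x_1^2,\ x_2^2x_1x_2x_1,\ \dots
$$
occurring as a leading word is a Lyndon word in the sense of Definition \ref{definition-lyndon-word}. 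For each such $w$ I would verify the defining inequality $w>_{lex}w_2w_1$ for every factorization $w=w_1w_2$ into nonempty words, i.e. that $w$ is the lexicographically largest of its cyclic rotations; since every listed word begins with $x_2$ and ends with $x_1$, each such comparison is decided at an early position and is immediate.

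The one point of care in the gldim $5$ case is the phrase ``up to switching'' in Theorem \ref{Thm A}. Because the deg-lex order is not symmetric under the letter swap $T$, the reduced Gr\"obner basis of an algebra $A$ with only $A^{\omega}\in\mathfrak{X}$ is \emph{not} obtained from the listed one by exchanging $x_1$ and $x_2$, so its leading words must be examined separately. I would therefore compute $\lw(G)$ directly from the switched relation sets $T(f_i)$ of each family; this is again a finite check, and one finds (for instance $\lw(T(f_1))=x_2^2x_1$ and $\lw(T(f_2))=x_2x_1x_2x_1^2$ for the family $\mathcal{A}$) that the leading words remain Lyndon. Thus the property passes to $A$ itself regardless of which orientation lands in $\mathfrak{X}$.

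For the case $\gldim A\leq4$ I would first note that the hypothesis $\deg x_1=(1,0)$, $\deg x_2=(0,1)$ is a genuine restriction: it forces every defining relation to be $\Z^2$-homogeneous, which excludes, e.g., the Jordan plane $x_2x_1-x_1x_2-x_1^2$. The surviving two-generator AS-regular domains are tightly constrained by their admissible resolution types, and I would run the same Hilbert-driven Gr\"obner analysis as in Sections 3 to 7 — now with far fewer possibilities — to pin down, for each resolution type, the finite list of candidate obstruction sets permitted by Lemma \ref{minimal resolution} and the domain condition. In dimension $2$ this yields only the quantum planes $x_2x_1-qx_1x_2$ (with $\lw=x_2x_1$), while in dimensions $3$ and $4$ the obstructions coincide with those appearing as regular normal quotients of the dimension-$5$ families already analysed (cf. the images in \cite{LPWZ}); in each case the leading words are checked to be Lyndon exactly as above.

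The main obstacle I anticipate is not the Lyndon verification, which is mechanical, but securing completeness where a direct citation does not suffice: the switching analysis above, and the dimension-$4$ step, since \cite{LPWZ} treats the two-generator case only under a generic hypothesis. For the latter I would lean on the resolution-type bookkeeping to enumerate all admissible obstruction sets rather than on a full isomorphism classification, so that ``$\lw(G)$ consists of Lyndon words'' is obtained for every admissible $A$ and not merely the generic ones. Once the candidate sets are in hand, Theorem \ref{theorem-Lyndon} serves as a consistency check: its Hilbert-series and $\gkdim$ formulas must reproduce the data read off from the resolution, confirming that no admissible obstruction set has been missed.
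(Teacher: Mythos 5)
Your proposal is correct and takes essentially the same route as the paper: its proof likewise reduces everything to the finitely many obstruction sets pinned down by the Hilbert-driven analysis of Sections 3--7, treats the orientation $\deg_1(f_1)<\deg_2(f_1)$ by redoing that analysis symmetrically (producing, e.g., $\lw(G)=\{x_2^2x_1,\,x_2x_1^4,\,x_2x_1x_2x_1^2,\,x_2x_1^2x_2x_1^3\}$, in agreement with your recomputation for the switched family $\mathcal{A}$), and settles $\gldim A\leq 4$ by the same ``much simpler'' Hilbert-driven bookkeeping you outline. In particular, your care about the asymmetry of the deg-lex order under the letter swap is exactly the point the paper handles with its ``symmetric discussion,'' so the two arguments match in substance.
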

\begin{proof}
Let $A$ be an AS-regular domain of global dimension $\leq 4$. By a similar (but much simpler) discussion we have taken in Section 3 - Section 7, one can conclude the following:
\begin{enumerate}
\item $\lw(G)=\{x_2x_1\}$ if $A$ is an AS-regular domain of global dimension $2$;
\item $\lw(G)=\{x_2x_1^2,\, x_2^2x_1\}$ if $A$ is an AS-regular domain of global dimension $3$;
\item $\lw(G)=\{x_2x_1^2,\, x_2^3x_1,\, x_2^2x_1x_2x_1\}$ or $\{x_2^2x_1,\, x_2x_1^3,\, x_2x_1x_2x_1^2\}$ if $A$ is an AS-regular domain of global dimension $4$.
\end{enumerate}

Suppose that $A$ is an AS-regular domain of global dimension $5$ and $\gkdim A\geq4$.

If $\deg_1(f_1)\geq\deg_2(f_1)$, we see, from the discussions in previous sections, that $\lw(G)$ can be chosen as one of the following:
\begin{eqnarray*}
\begin{split}
&(4)\ \lw(G)=\{x_2x_1^2,\, x_2^2x_1x_2x_1,\, x_2^4x_1,\, x_2^3x_1x_2^2x_1\}, \; \mathrm{or}\\
&(5)\ \lw(G)=\{x_2x_1^2,\, x_2^3x_1,\, x_2^2x_1x_2x_1x_2x_1,\, x_2^2x_1x_2^2x_1x_2x_1\}, \; \mathrm{or}\hskip37mm\\
&(6)\ \lw(G)=\{x_2x_1^3,\, x_2^2x_1^2,\, x_2^3x_1,\, x_2x_1x_2x_1^2,\, x_2^2x_1x_2x_1\}.
\end{split}
\end{eqnarray*}

If $\deg_1(f_1)<\deg_2(f_1)$, we see, by a symmetric discussion in previous sections, that $\lw(G)$ can be chosen as one of the following:
\begin{eqnarray*}
\begin{split}
&(7)\ \lw(G)=\{x_2^2x_1, x_2x_1^4,\, x_2x_1x_2x_1^2,\, x_2x_1^2x_2x_1^3\}, \; \mathrm{or}\\
&(8)\ \lw(G)=\{x_2^2x_1,\, x_2x_1^3,\, x_2x_1x_2x_1x_2x_1^2,\, x_2x_1x_2x_1^2x_2x_1^2\}.\hskip42mm
\end{split}
\end{eqnarray*}

Observe all words have listed are Lyndon words, the result follows.
\end{proof}

\begin{remark} The method we used in this paper is also available in classifying the AS-regular $\Z^2$-graded algebra of global dimension $\leq4$ which is a domain with two generators of degrees $(1, 0)$ and $(0, 1)$, respectively.  We have justified it without using the Maple. In the case of global dimension $2$, our method gives one family of solutions, which turns out to be the quantum planes. In case of global dimension $3$, our method gives two families of solutions, which turns out to be the algebras of type S1 and type S2 in \cite{ASc}. In case of global dimension $4$, our method gives the same set of solutions of that given in \cite{LPWZ}.
\end{remark}

\vskip7mm

{\it Acknowledgments.}  This research is supported by the NSFC
(Grant No. 11271319) and partially by the NSF of Zhejiang Province of China (Grant No.LQ12A01028).

\end{document}